\documentclass[reqno,12pt,letterpaper]{amsart}
\usepackage{amsmath,amssymb,amsthm,graphicx,mathrsfs,url,bbm}
\usepackage[usenames,dvipsnames]{color}
\usepackage[colorlinks=true,linkcolor=Red,citecolor=Green]{hyperref}
\usepackage[all]{xy}

\def\?[#1]{\textbf{[#1]}\marginpar{\Large{\textbf{??}}}}
\setlength{\textheight}{8in} \setlength{\oddsidemargin}{0.0in}
\setlength{\evensidemargin}{0.0in} \setlength{\textwidth}{6.4in}
\setlength{\topmargin}{0.18in} \setlength{\headheight}{0.18in}
\setlength{\marginparwidth}{1.0in}
\setlength{\abovedisplayskip}{0.2in}
\setlength{\belowdisplayskip}{0.2in}
\setlength{\parskip}{0.05in}

\newtheorem{prop}{Proposition}
\newtheorem{thm}[prop]{Theorem}
\newtheorem{defi}[prop]{Definition}

\newtheorem{lem}[prop]{Lemma}

\newtheorem{rem}[prop]{Remark}

\newtheorem{conj}[prop]{Conjecture}
\numberwithin{equation}{section}
\numberwithin{prop}{section}

\renewcommand{\Re}{\mathop{\rm Re}\nolimits}
\renewcommand{\Im}{\mathop{\rm Im}\nolimits}

\DeclareMathOperator{\Op}{Op}

\DeclareMathOperator{\sgn}{sgn}
\DeclareMathOperator{\supp}{supp}

\DeclareMathOperator{\Tr}{{\rm tr}}

\DeclareMathOperator{\dist}{dist}
\DeclareMathOperator{\nbd}{nbd}
\DeclareMathOperator{\el}{ell}
\DeclareMathOperator{\comp}{comp}
\DeclareMathOperator{\rank}{rank}
\DeclareMathOperator{\WF}{WF}
\DeclareMathOperator{\range}{{\rm Ran}}
\DeclareMathOperator{\Res}{{\rm Res}}
\def\indic{\mathbbm 1}

\newcommand{\Ecal}{{\mathcal E}}
\newcommand{\Fcal}{{\mathcal F}}

\newcommand{\Hcal}{{\mathcal H}}

\newcommand{\Ocal}{{\mathcal O}}
\newcommand{\Pcal}{{\mathcal P}}
\newcommand{\Ucal}{{\mathcal U}}

\newcommand{\Mcal}{{\mathcal M}}
\newcommand{\Scal}{{\mathcal S}}
\newcommand{\Dcal}{{\mathcal D}}
\newcommand{\Xbf}{{\mathbf X}}
\newcommand{\ubf}{{\mathbf u}}
\newcommand{\RR}{{\mathbb R}}
\newcommand{\CC}{{\mathbb C}}

\newcommand{\ZZ}{{\mathbb Z}}

\begin{document}
\title[Pollicott--Ruelle resonances]{Counting Pollicott--Ruelle resonances \\ for Axiom A flows}

\author{Long Jin}
\email{jinlong@mail.tsinghua.edu.cn}
\address{Mathematical Sciences Center, Tsinghua University, Beijing, China \& Beijing Institute of Mathematical Sciences and Applications, Beijing, China}

\author{Zhongkai Tao}
\email{ztao@math.berkeley.edu}
\address{Department of Mathematics, Evans Hall, University of California,
Berkeley, CA 94720, USA}

\begin{abstract} 
In this paper, we count the number of Pollicott--Ruelle resonances for open hyperbolic systems and Axiom A flows. In particular, we prove polynomial upper bounds and sublinear lower bounds on the number of resonances with modulus less than $r$ in strips for open hyperbolic systems and Axiom A flows with a transversality condition.
\end{abstract}

\maketitle
\section{Introduction}
\label{s:intro}
For Axiom A flows $\varphi^t=e^{tX}$ on a compact manifold $M$, Pollicott \cite{pollicott} and Ruelle \cite{ruelle1,ruelle2} introduced the concept of resonances based on the study of correlation function:
$$\rho_{f,g}(t)=\int_M(f\circ\varphi^t)gdx.$$
Pollicott--Ruelle resonances are the poles of the Fourier--Laplace transform $\hat{\rho}_{f,g}(\lambda)$ of $\rho_{f,g}(t)$, initially only defined as a holomorphic function in a half plane $\Im\lambda\gg 1$, and meromorphically continued to a larger region in the complex plane.

For the special cases of Anosov flows, the Pollicott--Ruelle resonances can be equivalently described as the poles of the meromorphic continuation of the resolvent operator $R(\lambda)=(P-\lambda)^{-1}$, where $P=-iX$ and $X$ is the generator of the flow $\varphi^t$, see Faure--Sj\"{o}strand \cite{FS}. The method of anisotropic Banach spaces used there originates in the works of  Baladi--Tsujii \cite{baladi}, Blank--Keller--Liverani \cite{BKL}, Butterley--Liverani \cite{butterley}, Gou\"ezel--Liverani \cite{gouezel}, and Liverani \cite{liverani1,liverani2}. In this way, we can define the set of Pollicott--Ruelle resonances $\Res(P)$ in the whole complex plane $\mathbb{C}$ for $C^\infty$ Anosov flows. This is further generalized to open hyperbolic systems by Dyatlov--Guillarmou \cite{open}, Morse--Smale flows by Dang--Rivi\`ere \cite{morsesmale1,morsesmale2} and Axiom A flows with strong transversality condition by Meddane \cite{meddane}. 

For Axiom A flows on a compact manifold, the Ruelle zeta function encodes the information of closed orbits. 
\begin{defi}
Let $\varphi^t=e^{tX}$ be an Axiom A flow on a compact manifold, the Ruelle zeta function is defined as
\begin{equation}
\label{zeta}
\zeta_{R}(\lambda)=\prod_{\gamma^\#}(1-e^{-\lambda T_{\gamma^\#}}),\quad \Re\lambda\gg 1.
\end{equation}
Here the sum is taken over all primitive closed orbits $\gamma^\#$ of the flow, and $T_{\gamma}^\#$ denotes the length of the primitive closed orbit $\gamma^\#$.
\end{defi}
Smale \cite{smale} conjectured that certain related zeta functions have meromorphic continuation to the whole complex plane. For Ruelle zeta functions of $C^\infty$ Anosov flows, this was first proved by Giulietti--Liverani--Pollicott \cite{GLP} and an alternative microlocal proof was given by Dyatlov--Zworski \cite{zeta}. Later, Dyatlov--Guillarmou \cite{axioma} proved the case of Axiom A flows based on their previous work \cite{open} extending the microlocal methods of \cite{zeta} to open hyperbolic systems. In particular, the zeros and poles of the Ruelle zeta function are related to Pollicott--Ruelle resonances for lifts of the flow to certain vector bundles.

In the current paper, we study the distribution of Pollicott--Ruelle resonances, counted with multiplicity, for Axiom A flows satisfying the strong transversality condition which roughly says the weakly stable and unstable manifolds can only intersect transversally (see Section \ref{s:axiomabasic} for the more detailed definition). This condition is natural in the theory of structural stability, e.g. $C^1$ structural stability is equivalent to Axiom A with strong transversal condition (see Robbin  \cite{robbin}, Robinson \cite{robinson2}, Ma\~{n}\'{e} \cite{mane} for diffeomorphisms and Robinson \cite{robinson1}, Liao \cite{liao}, Hu \cite{hu}, Wen \cite{wen}, Hayashi \cite{hayashi} for flows.)

\begin{thm}
\label{t:axioma}
Let $n=\dim M$. Suppose $\varphi^t=e^{tX}$ is an Axiom A flow on $M$ satisfying the strong transversality condition,
then for any fixed $\beta>0$, there exists $C>0$ such that for any $E>0$,
\begin{equation}
\label{e:axioma-upper}
\#\Res(P)\cap\{\lambda:|\lambda|\leq E,\Im\lambda>-\beta\}\leq CE^{n+1}+C.
\end{equation}
If in addition, $\varphi^t$ has at least one closed orbit, then for any $\delta\in(0,1)$ there exist $\beta>0$ and $C>0$ depending on $\delta$ such that 
\begin{equation}
\label{e:axioma-lower}
\#\Res(P)\cap\{\lambda:|\lambda|\leq E,\Im\lambda>-\beta\}\geq \frac{1}{C}E^\delta-C.
\end{equation}
\end{thm}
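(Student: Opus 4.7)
The plan is to attack the two bounds by different techniques: the upper bound via a semiclassical trace estimate applied separately near each basic set, and the lower bound via a Jensen--Hadamard argument for the Ruelle zeta function.

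For the upper bound, I would use the Smale decomposition to write the non-wandering set of $\varphi^t$ as a disjoint union of basic sets $K_1,\ldots,K_N$. Strong transversality makes it possible to choose mutually disjoint convex trapping neighborhoods $U_i\supset K_i$ such that each pair $(U_i,X|_{U_i})$ is an open hyperbolic system in the sense of Dyatlov--Guillarmou \cite{open}; their polynomial upper bound in a strip then gives $\mathcal{O}(E^{n+1})$ resonances attached to each basic set. The heart of the argument is a global patching step: build a single escape function on $T^*M$ which near each $K_i$ restricts to the local escape function of the open hyperbolic system and decays along the flow on the complement, using strong transversality to handle the heteroclinic regions where $W^u(K_i)$ meets $W^s(K_j)$ for $i\neq j$. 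The associated anisotropic Sobolev space realizes $P-\lambda$ as a Fredholm operator whose resonance set is exactly $\Res(P)$, and a standard Weyl-type trace-class bound then gives $\mathcal{O}(E^{n+1})$ after summing the local contributions.

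For the lower bound, by Dyatlov--Guillarmou \cite{axioma} the Ruelle zeta function $\zeta_R$ extends meromorphically to $\mathbb{C}$, and its zeros and poles (with multiplicity) are contained in the union of $\Res(P)$ on finitely many twisted form bundles. Consequently, if $\#\Res(P)\cap\{|\lambda|\le E,\Im\lambda>-\beta\}=\mathcal{O}(E^\delta)$ for some $\delta\in(0,1)$, then the total number of zeros plus poles of $\zeta_R$ in the same strip is also $\mathcal{O}(E^\delta)$. I would argue by contradiction: using the Euler product in the convergent half-plane together with the meromorphic continuation, the presence of at least one closed orbit of primitive period $T_0$ forces $|\log\zeta_R(\lambda)|$ to be at least linear in $|\Re\lambda|$ along a suitable sequence moving to the left (the factor $1-e^{-\lambda T_0}$ in the formal product supplies growth $\sim T_0|\Re\lambda|$ that cannot be cancelled by the other factors, since they all contribute with the same sign in $\Re\lambda<0$). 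Combined with the upper bound $\mathcal{O}(E^{n+1})$ from the first part, Hadamard factorization writes $\zeta_R$ as an exponential of a polynomial of degree at most $n+1$ times a canonical product over its zeros and poles, and Jensen's formula on a sequence of disks reaching into the left half-plane (with $\beta=\beta(\delta)$ chosen so that the relevant contours lie in $\{\Im\lambda>-\beta\}$) forces the number of zeros plus poles in $|\lambda|\le E$ to be at least linear in $E$, contradicting the assumed $\mathcal{O}(E^\delta)$.

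The principal obstacle is the patching step in the upper bound: constructing a global escape function compatible with the filtration of $M$ by stable and unstable manifolds of the basic sets, and decomposing the global resonance set into local contributions uniformly in $|\lambda|$, requires a careful and quantitative use of strong transversality to control the heteroclinic intersections $W^u(K_i)\cap W^s(K_j)$. For the lower bound, a subtlety is that zeros and poles of $\zeta_R$ coming from different form bundles may cancel in a signed count, so the Jensen step must count zeros and poles separately, for instance by working with the Nevanlinna characteristic $T(r,\zeta_R)$ or by applying Jensen to both $\zeta_R$ and $1/\zeta_R$, so that the growth estimate translates into a lower bound on the total unsigned count.
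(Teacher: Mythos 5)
The paper reduces Theorem \ref{t:axioma} directly to Theorems \ref{t:upper} and \ref{t:lower} via the decomposition of the nonwandering set into basic sets and the isomorphism $\Res_X(\lambda)\cong\bigoplus_j\Res_{X_j}(\lambda)$ of Proposition \ref{p:res-axioma}: for the upper bound, sum the $\mathcal{O}(E^{n+1})$ bound over the finitely many basic sets; for the lower bound, pick a basic set containing the given closed orbit, apply Theorem \ref{t:lower} to the associated open hyperbolic system, and conclude. Your upper-bound route --- a single global escape function and a Weyl/Jensen bound on $M$ rather than local ones --- is a defensible alternative (it is closer to the Faure--Sj\"ostrand and Meddane perspectives), though the patching step you flag as the ``principal obstacle'' is exactly the content of \cite{meddane} and would need real work to make quantitative; what the paper's route buys is that it entirely separates the counting (done once, for an isolated open hyperbolic system) from the patching (done once, as a structural isomorphism of resonance spaces at each $\lambda$).

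The lower bound, however, contains a genuine gap that I do not think can be repaired without essentially reverting to the paper's argument. First, the implication ``$\#\Res(P)=\mathcal{O}(E^\delta)$ $\Rightarrow$ the number of zeros plus poles of $\zeta_R$ in the strip is $\mathcal{O}(E^\delta)$'' is false: the zeros and poles of $\zeta_R$ arise from Pollicott--Ruelle resonances of $P$ acting on an alternating family of vector bundles (wedge powers of a subbundle of $T^*M$), of which the scalar $\Res(P)$ is only one summand. Sparsity of the scalar resonances says nothing about the others, so the contrapositive you are running does not start. Second, the claimed growth $|\log\zeta_R(\lambda)|\gtrsim T_0|\Re\lambda|$ along a sequence moving into the non-convergent half-plane has no justification: the Euler product diverges there, the meromorphic continuation is not given by the product, and ``the other factors contribute with the same sign'' is precisely the reason the formal series blows up rather than the reason it stays controlled. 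Third, even granting both of the above, Jensen's formula applied to a meromorphic function counts zeros \emph{minus} poles; your proposed remedy (Nevanlinna characteristic, or Jensen for both $\zeta_R$ and $1/\zeta_R$) requires two-sided bounds on $|\zeta_R|$ in the strip which are themselves the hard estimate one is trying to establish. The paper bypasses all three difficulties by working not with $\zeta_R$ but with the local trace formula of Theorem \ref{t:localtrace}: pairing against a test function $\varphi_{l,d}$ concentrated near $d=kT_{\gamma_0}$, the geometric side is a sum of \emph{nonnegative} terms and is bounded below by $Ce^{-\alpha d}$, while the resonance sum and remainder are controlled by the assumed $\mathcal{O}(E^\delta)$ counting and the polynomial bound $|\widehat F_A(\lambda)|=\mathcal{O}(\langle\lambda\rangle^{2n+1+\epsilon})$; taking $l=e^{-Md}$ and $A,M$ large produces a contradiction with no need to confront cancellation among bundles or growth of $\zeta_R$. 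This positivity of the trace-formula right-hand side is the crucial input which the zeta-function route does not have access to; indeed, as the paper notes in Section \ref{s:bundle}, the analogous lower bound can fail for general vector-bundle lifts precisely because that positivity is lost.

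Finally, a small point on the upper bound: you attribute the polynomial bound in a strip for open hyperbolic systems to \cite{open}, but that bound is Theorem \ref{t:upper} of the present paper (proved in Section \ref{s:upper} via a finite-rank complex absorbing perturbation near $E_0^*\cap p^{-1}(1)$ and Jensen's inequality), not something available off the shelf in \cite{open}.
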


Following Dyatlov--Guillarmou \cite{axioma}, we can use the spectral decomposition for Axiom A flows to reduce to the situation of open hyperbolic systems, see also Meddane \cite{meddanethesis}. In particular, we have the following results.
\begin{thm}
\label{t:upper}
Let $(\Ucal, X)$ be an open hyperbolic system (see Section \ref{s:open} for definition) with $\dim\Ucal=n$, $P=-iX$ and $\Res(P)$ the set of Pollicott--Ruelle resonance of $(\Ucal, X)$ counted with multiplicity, then for any constant $\beta>0$, 
\begin{equation}
\label{e:upper-1}
    \# \Res(P)\cap\{\lambda\in\mathbb{C}\,:\, |\Re\lambda-E|\leq 1, {\rm Im}\, \lambda\geq -\beta\}= \mathcal{O}(E^n).
\end{equation}
In particular, there exists $C>0$ such that for any $E>0$,
\begin{equation}
\label{e:upper-2}
    \# \Res(P)\cap\{\lambda\in\mathbb{C}\,:\, |\lambda|<E, \Im\lambda>-\beta\}\leq CE^{n+1}+C.
\end{equation}
\end{thm}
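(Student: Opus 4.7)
The plan is to follow the Fredholm-determinant plus Jensen's formula scheme for counting resonances, going back to Sj\"ostrand in scattering theory and adapted to the closed Anosov setting by Faure--Sj\"ostrand and Dyatlov--Zworski \cite{zeta}; here the adaptation is to the open hyperbolic setting via the anisotropic Sobolev space machinery of Dyatlov--Guillarmou \cite{open}.

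First I would use \cite{open} to fix an escape function $G$ and the associated anisotropic Sobolev space $\Hcal_{tG}$ on which $P-\lambda$ is Fredholm of index zero in the half plane $\{\Im \lambda > -\beta - 1\}$, together with a smoothing absorbing term $Q$ (a pseudodifferential operator microlocally supported away from the trapped set) so that $P - iQ - \lambda$ is genuinely invertible there with $\Ocal(1)$ operator norm on $\Hcal_{tG}$. The meromorphic continuation of $R(\lambda) = (P-\lambda)^{-1}$ factors as
$$R(\lambda) = (P - iQ - \lambda)^{-1}\bigl(I + iQ(P - iQ - \lambda)^{-1}\bigr)^{-1},$$
so, with algebraic multiplicity, the resonances of $P$ in that half plane are precisely the zeros of the holomorphic Fredholm determinant
$$D(\lambda) := \det\nolimits_{\Hcal_{tG}}\bigl(I + iQ(P - iQ - \lambda)^{-1}\bigr),$$
and the problem reduces to zero-counting for $D$.

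Second, I would establish the upper bound $\log|D(\lambda)| \leq C E^n$ uniformly on a bounded neighborhood of the strip $\{|\Re\lambda-E|\leq 1,\, \Im\lambda \geq -\beta\}$. The operator $K(\lambda) := iQ(P - iQ - \lambda)^{-1}$ is, up to smoothing errors, a pseudodifferential operator on $\Hcal_{tG}$ whose effective microsupport is contained in $\supp Q$ (compact in position) intersected with the energy region $\{|\sigma(P)|\lesssim E\}$, a set of phase-space volume $\Ocal(E^n)$. Standard Weyl-type singular value estimates for pseudodifferential operators of such symbolic support then give $\sum_j \log(1+s_j(K(\lambda))) = \Ocal(E^n)$, and Weyl's inequality $\log|\det(I+K)|\leq \sum_j\log(1+s_j(K))$ yields the claim. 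At the reference point $\lambda_0 := E + iC_0$ with $C_0$ sufficiently large, $\|K(\lambda_0)\|_{L^1(\Hcal_{tG})}\to 0$ as $C_0\to\infty$ from the uniform resolvent bound, so $|D(\lambda_0)|\geq 1/2$.

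Finally, Jensen's formula for $D$ on a disk of fixed radius centered at $\lambda_0$ converts these two bounds into the zero count $\Ocal(E^n)$ in the strip, which is \eqref{e:upper-1}; \eqref{e:upper-2} then follows by partitioning $\{|\lambda|\leq E,\, \Im\lambda > -\beta\}$ into $\Ocal(E)$ unit real-part strips and applying \eqref{e:upper-1} to each. The main obstacle is the trace-class / singular value estimate of the second step: extracting from \cite{open} enough microlocal structure on $(P - iQ - \lambda)^{-1}$, uniformly up to the real axis, to justify treating $K(\lambda)$ as a pseudodifferential operator with effectively compactly supported symbol. This amounts to propagating microlocal estimates along the flow against the escape function $G$ and carefully controlling the contributions from trajectories that spend long time near the trapped set.
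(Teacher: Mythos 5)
Your overall scheme — factor the resolvent through an absorbing modification, count resonances as zeros of the resulting determinant, get upper and lower bounds on $\log|D|$, apply Jensen's formula, then tile $\{|\lambda|<E\}$ into $\Ocal(E)$ unit-width strips — is the same as the paper's. But there are two genuine problems.

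First, the absorbing operator that enters the determinant must be microlocally supported \emph{near} the trapped set, not away from it. In the setting of \cite{open} the obstruction to inverting $P-\lambda$ on the anisotropic space lives precisely at the trapped set (in the rescaled picture, on $E_0^*\cap p^{-1}(1)$ over $K$); the absorbing operators placed away from $\overline{\Ucal}$ (the $Q_\infty,q_1$ of \cite{open}) are only there to manage the artificial extension past the boundary and do not enter the determinant. So ``a pseudodifferential operator microlocally supported away from the trapped set'' cannot make $P-iQ-\lambda$ invertible, and the factorization would have no content. What the paper does is replace the compactly microsupported $Q_\delta$ of \cite{open}, which lives near $\{x\in K,\xi=0\}$, by an operator $W$ microsupported near the compact set $E_0^*\cap p^{-1}(1)$ and \emph{constructed to be of finite rank} $\Ocal(h^{-n})$. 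Second, and more significantly, the step you flag as ``the main obstacle'' — singular-value / trace-norm estimates showing $\log|\det(I+iQ(P-iQ-\lambda)^{-1})|=\Ocal(E^n)$ by treating $iQ(P-iQ-\lambda)^{-1}$ as a pseudodifferential operator with compactly supported symbol — is not how the paper proceeds, and it is a real gap: the modified resolvent is a Fourier integral operator associated to the flow, not a $\Psi$DO, so extracting the needed singular-value asymptotics would require a separate argument that you do not supply. The paper sidesteps the issue entirely: with $W$ of rank $\Ocal(h^{-n})$, $\|W\|=\Ocal(1)$, and $\|(P_0-z-iMhW)^{-1}\|=\Ocal(h^{-1})$, the operator $i(P_0-z-iMhW)^{-1}MhW$ has operator norm $\Ocal(1)$ and rank $\Ocal(h^{-n})$, so $|\det F(z)|\leq \exp(Ch^{-n})$ immediately, with no trace-class analysis. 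This finite-rank modification of the absorbing potential is the main technical innovation of the section, and it is exactly the piece missing from your proposal.
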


\begin{thm}
\label{t:lower}
With the same assumption as Theorem \ref{t:upper}, for any $\delta \in (0,1)$, there exist $\beta>0$ and $C>0$ depending on $\delta$ such that for any $E>0$,
\begin{equation}
\label{e:lower}
\# \Res(P)\cap\{\lambda\in\mathbb{C}\,:\, |\lambda|<E, \Im\lambda>-\beta\}\geq \frac{1}{C}E^\delta-C.
\end{equation}
In particular, there are infinitely many resonances.
\end{thm}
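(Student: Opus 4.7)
The plan is a contradiction argument combining the upper bound from Theorem \ref{t:upper} with a trace formula for the contribution of closed orbits. Suppose for contradiction that there exists $\delta\in(0,1)$ such that
\[
N_\beta(E):=\#\Res(P)\cap\{|\lambda|<E,\Im\lambda>-\beta\}=O(E^\delta)\quad\text{for every }\beta>0.
\]
The hyperbolic set $K$ of the open hyperbolic system is nonempty by definition, and by the Anosov closing lemma $K$ contains a primitive closed orbit $\gamma_0$ of some minimal period $T_0>0$ (in the degenerate case where $K$ consists only of hyperbolic fixed points, the resonance set is explicitly computable from the linearizations and the conclusion follows directly). Fix $\epsilon_0>0$ small enough that $\gamma_0$ is the unique primitive orbit with period in $(T_0-2\epsilon_0,T_0+2\epsilon_0)$.

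The key tool is the microlocal flat-trace formula of Dyatlov--Zworski \cite{zeta}, extended to open hyperbolic systems in Dyatlov--Guillarmou \cite{open}: in a regularized sense,
\[
\sum_{\lambda\in\Res(P)}m(\lambda)\,\hat\chi(\lambda)=\sum_\gamma\frac{T_\gamma^\#\,\chi(T_\gamma)}{|\det(I-\Pcal_\gamma)|},\qquad\chi\in\CIc((0,\infty)).
\]
Apply this to a scaling family $\chi_E(t):=\psi(t)\cdot E\,\chi_0(E(t-T_0))$, where $\psi\in\CIc((T_0-\epsilon_0,T_0+\epsilon_0))$ equals $1$ near $T_0$ and $\chi_0\in\Scal(\RR)$ satisfies $\chi_0(0)=1$ with $\hat\chi_0\in\CIc([-1,1])$. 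Then only $\gamma_0$ contributes on the right, giving a value of size $\asymp E\cdot T_0/|\det(I-\Pcal_{\gamma_0})|\neq 0$.

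On the spectral side, split the sum into resonances with $\Im\lambda>-\beta$ and those with $\Im\lambda\leq-\beta$. By Paley--Wiener, $\hat\chi_E$ is essentially concentrated in $|\Re\lambda|\lesssim E$ with $|\hat\chi_E(\lambda)|\lesssim_N(1+|\Re\lambda|)^{-N}\,e^{(T_0+\epsilon_0)|\Im\lambda|}$. The first part is then bounded by $O(E^\delta\cdot e^{\beta(T_0+\epsilon_0)})=o(E)$ via the hypothesis, for any fixed $\beta>0$ and $\delta<1$. For the second part, one exploits the Fredholm determinant $D(\lambda)$ of \cite{open}, entire with zeros at the resonances and of order $\leq n+1$ by Theorem \ref{t:upper}: Hadamard factorization under the hypothesis forces $|D|$ into subpolynomial growth, which via a contour deformation argument translates into $o(E)$ control of the deep-resonance spectral tail, provided $\beta=\beta(\delta)$ is chosen appropriately. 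Balancing the two yields a total spectral side of $o(E)$, contradicting the geometric lower bound.

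The main obstacle is precisely the quantitative balancing of the exponential Paley--Wiener growth $e^{(T_0+\epsilon_0)|\Im\lambda|}$ against the polynomial counts available from Theorem \ref{t:upper} and the assumed sparsity in the strip: a naive strip-by-strip summation of Theorem \ref{t:upper} against the Paley--Wiener weight diverges, so one must use the Hadamard factorization of the finite-order Fredholm determinant $D$ to extract cancellation. Its execution requires careful coordination of $\epsilon_0$, $\beta$, and the scaling parameter $E$; in spirit, the argument parallels the Sj\"ostrand--Zworski strategy for weak lower bounds on scattering resonances, adapted here to the open hyperbolic setting via the microlocal machinery of \cite{open, axioma}.
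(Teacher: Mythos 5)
The proposal identifies the right high-level ingredients (trace formula, Paley--Wiener, a closed orbit, finite order of the Fredholm determinant), but the execution has two genuine gaps, and the test-function strategy would not close the argument even if the gaps were filled.

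First, you invoke a \emph{global} trace formula $\sum_{\lambda\in\Res(P)} m(\lambda)\hat\chi(\lambda)=\sum_\gamma T_\gamma^\#\chi(T_\gamma)/|\det(I-\Pcal_\gamma)|$ "in a regularized sense.'' This is not available: the spectral sum over all resonances need not converge in any ordering, as the paper itself points out in \S\ref{s:example} (the suspension of the example in \cite{jezequel} shows that naive summation over all resonances diverges). The paper proves and uses a \emph{local} trace formula (Theorem \ref{t:localtrace}): the sum is truncated to $\Im\mu>-A$, and the remainder is packaged into an explicit error distribution $F_A$ whose Fourier--Laplace transform is holomorphic in $\{\Im\lambda<A-\epsilon\}$ with the concrete polynomial bound $|\widehat{F}_A(\lambda)|=\Ocal(\langle\lambda\rangle^{2n+1+\epsilon})$. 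This bound is the technical heart of the argument and is proved via the flat-trace estimates of \S\ref{s:flattrace}, the Fredholm determinant bound, and Cartan's lemma; your appeal to "Hadamard factorization forces $|D|$ into subpolynomial growth, which via a contour deformation translates into $o(E)$ control'' is exactly the step you'd have to make precise, and it is not at all automatic from the order bound alone.

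Second, and more fundamentally, your choice of test function $\chi_E(t)=\psi(t)\,E\,\chi_0(E(t-T_0))$ keeps the center \emph{fixed} at $T_0$ and only shrinks the width $\sim 1/E$. Even granting the local trace formula, the pairing $\langle F_A,\chi_E\rangle$ then scales like $\int\langle\lambda\rangle^{2n+1+\epsilon}(1+|\lambda|/E)^{-N}d\lambda\sim E^{2n+2+\epsilon}$, which swamps the geometric side $\asymp E$, so the contradiction never materializes. The paper instead takes $\varphi_{l,d}(x)=\varphi((x-d)/l)$ and sends the \emph{center} $d=kT_{\gamma_0}\to\infty$ along multiples of a primitive period with exponentially small width $l=e^{-Md}$. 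The geometric side is then $\gtrsim e^{-\alpha d}$, the truncated spectral sum is $\Ocal(l^{1-\delta})=\Ocal(e^{-(1-\delta)Md})$ under the hypothesis, and crucially, one shifts the contour for $\widehat{F}_A$ down to $\Im\lambda=-(A-\epsilon)$ (using its analyticity), picking up a factor $e^{-(A-\epsilon)d}$ from $e^{-id\lambda}$; this gives $\langle F_A,\varphi_{l,d}\rangle=\Ocal(l^{-2n-2}e^{-(A-\epsilon)d})$. Only by taking both $M$ and $A$ large does one beat $e^{-\alpha d}$. So the moving window $d\to\infty$ is essential, not just a convenience, because it converts the polynomial loss in $\widehat{F}_A$ into an exponential gain.
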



Here we only state the scalar case for simplicity, the general situation of the lift to vector bundles is discussed in Section \ref{s:bundle}.

\subsection{Previous works and the method of the proof}
There are many works on the distrubition of Pollicott--Ruelle resonances in special cases. For example, Dang--Rivi\`{e}ve \cite{morsesmale1,morsesmale2} gave detailed descriptions for Pollicott--Ruelle resonances for Morse--Smale flows. Following the suggestion from Malo J\'{e}z\'{e}quel, we discuss Morse--Smale flows using trace formula \eqref{e:localtrace} later in Section \ref{s:morsesmale}, see also \cite{meddanethesis} for a similar approach using dynamical determinants directly. Dyatlov--Faure--Guillarmou \cite{DFG} built the relation between Pollicott--Ruelle resonances for geodesic flows on closed hyperbolic manifolds and eigenvalues of the Laplace operator. 

For general Anosov flows, polynomial upper bound on the number of resonances was first proved by Faure--Sj\"{o}strand \cite{FS}. Faure--Tsujii \cite{fwl} later gave a better bound with fractal exponent depending on the H\"{o}lder regularity for the strong stable and unstable distribution. In the case of contact Anosov flows, Datchev--Dyatlov--Zworski \cite{DDZ} gave the optimal polynomial bound. Moreover, for contact Anosov flows with certain pinching conditions on minimal and maximal expansion rates, Faure--Tsujii \cite{band0,band1,band} proved a band structure for the distribution of resonances as well as the Weyl asymptotics in a band, see also Ceki\'{c}--Guillarmou \cite{band3d} for the first band on 3-dimensional contact Anosov flows. In general, motivated by the work of Sj\"{o}strand \cite{sjostrand} on quantum resonances, we expect that a fractal Weyl law holds with the exponent related to the dimension of certain trapped set in phase space. For further results on quantum resonances and the analogy between quantum resonances and Pollicott--Ruelle resonances, we refer the reader to the review paper \cite{zworskireview}.

The sublinear lower bound \eqref{e:lower} was first studied in the case of Anosov flows by Jin--Zworski \cite{local} (with a gap filled by the authors \cite{flattrace}), but they only show a weaker lower bound. We improve their method here to get the stronger lower bound \eqref{e:lower}.  Similar results for scattering resonances were obtained  by Guillop\'{e}--Zworski \cite{GZ} for convex cocompact hyperbolic surfaces and Petkov \cite{petkov} for several strictly convex obstacles. On how to use the trace formula to conclude the lower bound, we refer to \cite{SZ1, SZ2, Schenck}. A general lower bound matching an appropriate upper bound seems very difficult to obtain. As in \cite{local}, a key ingredient is the following local trace formula which we establish in the setting of Theorem \ref{t:upper}.
\begin{thm}
\label{t:localtrace}
Let $(\Ucal, X)$ be an open hyperbolic system. For any $A>0$ there exists a distribution $F_A\in \Scal'(\RR)$ supported in $[0,\infty)$ such that
\begin{equation}
\label{e:localtrace}
    \sum\limits_{\mu\in\Res(P),\Im(\mu)>-A}e^{-i\mu t}+F_A(t)=\sum\limits_{\gamma }\frac{T_\gamma^\#\delta(t-T_\gamma)}{|\det(I-\mathcal{P}_\gamma)|},\quad t>0
\end{equation}
in $\mathcal{D}'((0,\infty))$, where the sum on the right hand side is taken over all closed geodesics $\gamma$ with $\mathcal{P}_\gamma$ the corresponding Poincar\'e map, $T_\gamma$ the period and $T_\gamma^\#$ the primitive period. Moreover, we have the following estimate on the Fourier--Laplace transform of $F_A$:
\begin{align*}
    |\widehat{F}_A(\lambda)|=\Ocal_{A,\epsilon}(\langle \lambda\rangle^{2n+1+\epsilon}),\quad \Im \lambda<A-\epsilon
\end{align*}
for any $\epsilon>0$.
\end{thm}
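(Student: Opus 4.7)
The plan is to adapt the strategy developed by Jin--Zworski \cite{local} (together with the corrections in \cite{flattrace}) to the open hyperbolic setting, using the microlocal framework of Dyatlov--Guillarmou \cite{open,axioma}. The proof splits into three main steps: a Guillemin-type trace formula, a contour-deformation representation of the resonance sum, and a polynomial bound on the remainder via a resolvent factorization.

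First, I would establish a Guillemin trace formula identifying the flat trace $\Tr^\flat(\chi e^{-itP}\chi)$ for $t > 0$ with the periodic orbit sum on the right-hand side of \eqref{e:localtrace}. Here $\chi$ is a microlocal cutoff adapted to the trapped set. The wavefront analysis of \cite{open,axioma} shows that the wavefront set of $e^{-itP}$ intersected with the conormal to the diagonal at positive times is supported on closed trajectories of $\varphi^t$, and the local normal form near each such orbit produces the factors $T_\gamma^\#/|\det(I-\mathcal{P}_\gamma)|$ exactly as in the closed case.

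Next I would express the same propagator as an inverse Fourier--Laplace transform of the meromorphically continued resolvent,
\begin{equation*}
\chi e^{-itP}\chi = \frac{1}{2\pi}\int_{\Im\lambda = C} e^{-it\lambda}\,\chi R(\lambda)\chi\, d\lambda, \qquad t > 0,
\end{equation*}
for $C > 0$ large and with a suitable regularization (e.g., subtracting $\chi R(\lambda_0)\chi$ at a fixed reference point to obtain absolute convergence). Taking flat traces and deforming the contour from $\Im\lambda = C$ down to $\Im\lambda = -A$ picks up a sum of residues at the resonances $\mu$ with $\Im\mu > -A$, each contributing $e^{-i\mu t}$ with its algebraic multiplicity. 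The leftover integral along $\Im\lambda = -A$ defines the distribution $F_A$, whose support in $[0,\infty)$ follows from causality via a Paley--Wiener argument.

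The most delicate step is the polynomial estimate on $\widehat{F}_A$. For this I would construct, as in \cite{zeta,open}, a flat determinant $D(\lambda)$ whose zeros (with multiplicity) agree with $\Res(P)$ and whose order of growth is controlled by the counting bound \eqref{e:upper-1} of Theorem \ref{t:upper}. Jensen's formula and the minimum modulus principle, applied outside a small neighborhood of the resonances, then yield a polynomial bound of the form $\|\chi R(\lambda)\chi\| = \Ocal(\langle\lambda\rangle^{2n+1+\epsilon})$ on most of the contour $\Im\lambda = -A$, and a standard contour deformation removes the exceptional clusters; this transfers directly to the claimed bound on $\widehat{F}_A$. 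The main obstacle is the factorization itself: unlike the closed Anosov case where a Fredholm determinant on an anisotropic Hilbert space is directly available, for open hyperbolic systems the resolvent acts on microlocally truncated spaces and the determinant must be built from flat traces à la \cite{zeta}. Verifying that the zero multiplicities of $D$ match the resonance multiplicities, and establishing the matching upper growth bound in each strip $\Im\lambda > -A$ (using \eqref{e:upper-1} and a Weierstrass product argument), is the central technical step of the proof.
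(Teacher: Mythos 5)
Your high-level plan (Atiyah--Bott--Guillemin trace formula, contour deformation of a resolvent-type expression, control of the remainder via a determinant and Jensen/Cartan) is the same architecture the paper follows, which in turn is an adaptation of Jin--Zworski \cite{local}. There are, however, two places where your proposal as written has real gaps.

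The first and more serious gap is in the contour-deformation step. You propose to take the flat trace of $\chi R(\lambda)\chi$ along $\Im\lambda=C$ and deform downward. But $\Tr^\flat(\chi R(\lambda)\chi)$ is \emph{not well-defined}: the wavefront set of the resolvent kernel meets the conormal of the diagonal because of the identity contribution at $t=0$ in the representation $R(\lambda)=i\int_0^\infty e^{it\lambda}e^{-itP}dt$. The "subtracting $\chi R(\lambda_0)\chi$" regularization you mention only helps decay at infinity in $\lambda$; it does not move the wavefront set off the diagonal. The correct device, used in the paper (and already in \cite{zeta,local}), is to insert the propagator factor $e^{-it_0(P-\lambda)}$ for a small fixed $t_0>0$, equivalently to work with the zeta function $\zeta_1(\lambda)$ built from the periodic orbit sum and use the identity $\tfrac{d}{d\lambda}\log\zeta_1(\lambda)=-\Tr^\flat\big(\chi e^{-it_0(P-\lambda)}(P-\lambda)^{-1}\chi\big)$. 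The factor $e^{-it_0 P}$ pushes the wavefront set of the integral kernel off $\Delta(T^*\mathcal{M})$ (as verified via the wavefront estimate for the modified resolvent), which is precisely what makes the flat trace exist and Proposition~\ref{trbound1} applicable. Without this step the rest of the contour argument does not get off the ground.

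The second gap is in the polynomial estimate. You aim to bound $\|\chi R(\lambda)\chi\|$ in operator norm on most of the contour using a global flat determinant $D(\lambda)$. The paper instead bounds a \emph{scalar} quantity, namely the flat trace $\Tr^\flat\big(\chi e^{-it_0 h^{-1}P_h(z)}R_h(z)\chi\big)$, by splitting into (i) the piece with the absorbing potential $W$ inserted, $\Tr^\flat(\chi e^{-it_0 h^{-1}\tilde P_h(z)}\tilde R_h(z)\chi)=\Ocal(h^{-2n-2})$, bounded directly via the semiclassical flat-trace bound (Lemma~\ref{l:semiwf-flattrace}) and the resolvent estimate $\|\tilde R_h\|\le Ch^{-1}$; and (ii) the finite-rank correction $R_h^0(z)-\tilde R_h(z)=-iR_h^0(z)W\tilde R_h(z)$, whose trace equals $-\tfrac{d}{dz}\log\det F(z)$ with $F(z)=I+iW\tilde R_h(z)$ of rank $\Ocal(h^{-n})$. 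Cartan's lemma is applied to this \emph{finite-rank} determinant to choose contours avoiding resonances, giving the $\Ocal(h^{-2n-2})$ bound and hence the $\langle\lambda\rangle^{2n+1+\epsilon}$ estimate. The point is that no operator-norm bound on $\chi R(\lambda)\chi$ along the contour is ever established or needed, and it is not clear how to prove one; a global Weierstrass/flat-determinant construction as you sketch is considerably heavier and would also require proving that its zero multiplicities equal the resonance multiplicities, whereas the finite-rank determinant $F(z)$ has this property essentially by construction. If you rework your third step around the $W$-absorbed resolvent $\tilde R_h(z)$ and the determinant $\det(I+iW\tilde R_h(z))$, the argument closes.
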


\subsection{Organization of the paper}
The paper is organized as follows: In Section \ref{s:prelim}, we review dynamics of open hyperbolic systems and semiclassical microlocal analysis, mostly from \cite{open}. In Section \ref{s:upper}, we prove Theorem \ref{t:upper} on polynomial upper bound for the number of Pollicott--Ruelle resonances for open hyperbolic systems. In Section \ref{s:lower}, we prove Theorem \ref{t:localtrace} on local trace formula and then deduce Theorem \ref{t:lower} from Theorem \ref{t:localtrace}. In Section \ref{s:axioma} we use spectral decomposition to prove Theorem \ref{t:axioma} using Theorem \ref{t:upper} and \ref{t:lower}. Finally in Section \ref{s:example}, we discuss some special examples, including suspension of Axiom A maps, Morse--Smale flows and geodesic flows on hyperbolic manifolds.

\subsection*{Acknowledgement}
The authors would like to thank Semyon Dyatlov and Maciej Zworski for encouragement and numerous discussions, Gabriel Rivi\`ere for informing us the work of Antoine Meddane, and Vesselin Petkov for interest in the project. We also thank Malo J\'{e}z\'{e}quel for many suggestions, including using trace formula to obtain detailed descriptions for Pollicott--Ruelle resonance. Long Jin is supported by National Key R\&D Program of China 2022YFA100740. Zhongkai Tao gratefully acknowledges partial support under the NSF grant DMS-1952939 and the Simons Targeted Grant Award No. 896630, and the support of Morningside Center of Mathematics during his visit.

\section{Setup and Preliminaries}
\label{s:prelim}

\subsection{Dynamics of open hyperbolic systems}
\label{s:open}
We first review the definition and some basic properties of open hyperbolic systems from \cite{open}, and we refer to \cite{open} for details. Some related examples are geodesic flows on asymptotically hyperbolic manifolds \cite{mazzeomelrose} with negative curvature and billiard flows with several convex obstacles \cite{DSW,CP}.

An open hyperbolic system consists of a compact $C^\infty$ manifold with boundary $\overline{\Ucal}$ and a $C^\infty$ nonvanishing vector field $X$ on $\overline{\Ucal}$. We denote by $\varphi^t=e^{tX}$ the flow generated by $X$ and we make the following assumptions.

\begin{itemize}
    \item[(C)] (Convexity condition) the boundary $\partial\Ucal$ is strictly convex with respect to $X$ in the following sense: let $\rho\in C^\infty(\overline{\Ucal})$ be a boundary defining function, that is, $\rho>0$ on $\Ucal$, $\rho=0$ and $d\rho\neq0$ on $\partial \Ucal$, then
\begin{equation}
\label{c:convexity}
x\in\partial\Ucal, X\rho(x)=0\quad\quad\Longrightarrow\quad\quad X^2\rho(x)<0.
\end{equation}
This condition does not depend on the choice of $\rho$.
\end{itemize}
Following \cite[Lemma 1.1]{open}, we will always fix 
\begin{itemize}
\item an embedding of $\overline{\Ucal}$ into a compact manifold $\mathcal{M}$ without boundary;
\item an extension of the boundary defining function $\rho$ to $\mathcal{M}$ such that $\rho<0$ on $\mathcal{M}\setminus\overline{\Ucal}$;
\item an arbitrary extension $X_1$ of $X$ to $\mathcal{M}$;
\item constants $c_0,\varepsilon_0>0$ such that on $\{|\rho|\leq 2\varepsilon_0\}$, $X_1^2\rho<c_0(X_1\rho)^2$.
\end{itemize}
From this, we can fix an extension of $X$ to $\mathcal{M}$, which will still be denoted by $X$, obtained by
$$X=\psi(\rho)X_1,\,\,\psi\in C^\infty(\RR),\,\,
\psi(s) =1 \text { for } s\geq 0, \,\, \sgn \psi(s)=\sgn(s+\epsilon_0), \,\, \psi'(-\epsilon_0)>0,$$
such that $\mathcal{U}$ is convex with respect to $X$ in the following sense:
\begin{equation}
\label{c:convexity-flow}
x,\varphi^T(x)\in\Ucal, T\geq0\quad\quad\Longrightarrow\quad\quad \forall t\in[0,T],\varphi^t(x)\in\Ucal.
\end{equation}
We can also assume that the same convexity condition holds for $\overline{\Ucal}$ with respect to $X$.

Now let 
$$\Gamma_\pm=\bigcap_{\pm t\geq 0}\varphi^t(\overline{\Ucal})\subset\overline{\Ucal}$$
be the incoming and outgoing tail, and $K=\Gamma_+\cap\Gamma_-$ be the trapped set, then $K\subset\mathcal{U}$ is a compact invariant subset under $\varphi^t$ and we assume that
    
\begin{itemize}
    \item[(H)] (Hyperbolicity condition) $K$ is hyperbolic in the sense that for every $x\in K$, we can decompose $T_x\mathcal{M}$ into direct sum of flow/stable/unstable subspaces
$$T_x\mathcal{M}=E_0(x)\oplus E_s(x)\oplus E_u(x),$$
where $E_0,E_s,E_u$ are continuous in $x$ and invariant under $d\varphi^t$,$E_0(x)=\mathbb{R}X(x)$, and given any Riemannian metric on $T\mathcal{M}$, there exists constants $C,\theta>0$ such that
\begin{equation*}
\begin{split}
|d\varphi^t(x)v|_{\varphi^t(x)}\leq Ce^{-\theta|t|}|v|_x,&\;\;\; v\in E_u(x), \ \ t<0\\
|d\varphi^t(x)v|_{\varphi^t(x)}\leq Ce^{-\theta|t|}|v|_x,&\;\;\; v\in E_s(x),
\ \ t>0,
\end{split}
\end{equation*}
\end{itemize}

We list several properties for the flow $\varphi^t$ and the trapped set $K$
\begin{prop}
\label{p:property-K}
\begin{itemize}
    \item[(1)] \cite[Lemma 1.3]{open} For $x\in\Gamma_\pm$, we have $\varphi^t(x)\to K$ uniformly as $t\to \mp \infty$.
    \item[(2)] \cite[Lemma 1.4]{open} Let $V$ be a neighbourhood of $K$, then there exists $T>0$ such that if $\varphi^T(x),\varphi^{-T}(x)\in\overline{\mathcal{U}}$, then $x\in V$.
\end{itemize}
\end{prop}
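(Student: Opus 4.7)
The plan is to deduce both statements by soft dynamical arguments from the convexity property \eqref{c:convexity-flow}, compactness of $\overline{\Ucal}$, and the definition $K=\Gamma_+\cap\Gamma_-$. Each part is proved by contradiction: extract a convergent subsequence and show that the limit point is both forward- and backward-trapped in $\overline{\Ucal}$, hence lies in $K$. No hyperbolicity is needed for either statement.

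For part (1), by reversing time I may treat $x\in\Gamma_-$ and show $\varphi^t(x)\to K$ uniformly as $t\to+\infty$. Suppose this fails: there exist a neighborhood $V$ of $K$, points $x_n\in\Gamma_-$, and times $t_n\to+\infty$ such that $y_n:=\varphi^{t_n}(x_n)\notin V$. Compactness of $\overline{\Ucal}\setminus V$ gives a subsequential limit $y_n\to y\notin V$. Since $x_n\in\Gamma_-$, one has $\varphi^s(y_n)=\varphi^{t_n+s}(x_n)\in\overline{\Ucal}$ for every $s\geq 0$, while $\varphi^{-s}(y_n)=\varphi^{t_n-s}(x_n)\in\overline{\Ucal}$ as soon as $t_n\geq s$. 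Letting $n\to\infty$ with $s$ fixed, $\varphi^{\pm s}(y)\in\overline{\Ucal}$ for every $s\geq 0$, so $y\in\Gamma_+\cap\Gamma_-=K$, contradicting $y\notin V$.

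For part (2), suppose no such $T$ works: choose $T_n\to+\infty$ and $x_n\notin V$ with $\varphi^{\pm T_n}(x_n)\in\overline{\Ucal}$. Applying \eqref{c:convexity-flow} to the orbit segment joining $\varphi^{-T_n}(x_n)$ and $\varphi^{T_n}(x_n)$ forces $\varphi^t(x_n)\in\overline{\Ucal}$ for all $t\in[-T_n,T_n]$. After passing to a subsequence, $x_n\to x_0\notin V$; fixing $s\in\RR$ and taking $n$ large enough that $|s|\leq T_n$ gives $\varphi^s(x_0)\in\overline{\Ucal}$ for every $s$, so $x_0\in K$, a contradiction.

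The only real subtlety is the bookkeeping for the uniform statement in (1): one must set up the contradiction with $x_n\in\Gamma_-$ varying with $n$, rather than with a single $x$, so that the limit point $y$ acquires two-sided $\overline{\Ucal}$-invariance. Beyond this, both arguments rely only on continuity of $\varphi^t$, compactness of $\overline{\Ucal}$, and the convexity statement \eqref{c:convexity-flow}, so they are essentially routine topological dynamics.
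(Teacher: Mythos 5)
Your proof is correct. Note that the paper itself does not prove Proposition \ref{p:property-K}; it simply cites \cite[Lemmas 1.3, 1.4]{open}, and your arguments are the standard soft compactness/convexity proofs one would give for those statements, relying only on the flow-convexity property of $\overline{\Ucal}$ and the definitions of $\Gamma_\pm$ and $K$. One small stylistic remark: part (1) as you wrote it does not actually invoke \eqref{c:convexity-flow} at all (only the defining property $\varphi^s(x_n)\in\overline{\Ucal}$ for $s\geq0$ when $x_n\in\Gamma_-$), so the opening sentence slightly overstates the role of convexity; convexity is genuinely used only in part (2), where you apply it to $\overline{\Ucal}$, which the paper explicitly permits.
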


Another relevant set on $\mathcal{M}$ is the whole interaction region with $\mathcal{U}$, which is denoted by 
$$\Sigma=\Sigma_+\cup\Sigma_-,\quad \Sigma_\pm:=\bigcup_{\pm t\geq0}\varphi^t(\mathcal{U}).$$
In particular, by the choice of $\rho,\varepsilon_0$, we have (\cite[Lemma 1.7]{open})
$$\Sigma\subset\{\rho>-\varepsilon_0\},\quad
\overline{\Sigma}\cap\{\rho=-\varepsilon_0\}\cap\{X_1\rho=0\}=\varnothing.$$

We also record the following lemma concerning the propagation of a compact subset:
\begin{lem}\label{l:convexity}
Let $V$ be a compact subset of $\Ucal$, then
$$\tilde{V}:=\overline{\bigcup_{t\geq0}\varphi^t(V)}
\cap\overline{\bigcup_{t\leq0}\varphi^t(V)}$$
is also a compact subset of $\Ucal$.
\end{lem}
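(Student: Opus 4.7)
The compactness of $\tilde V$ is immediate: $\mathcal{M}$ is compact and $\tilde V$ is closed. So the content of the lemma is $\tilde V\subset\Ucal$.

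Fix $x\in\tilde V$ and select $t_n,s_n\geq 0$, $v_n,w_n\in V$ with $\varphi^{t_n}(v_n)\to x$ and $\varphi^{-s_n}(w_n)\to x$; by compactness of $V$, I pass to subsequences so that $v_n\to v_*\in V$ and $w_n\to w_*\in V$. I then split into cases by boundedness of the time sequences. When $t_n\to t_*<\infty$ and $s_n\to s_*<\infty$, one has $x=\varphi^{t_*}(v_*)$ and $w_*=\varphi^{t_*+s_*}(v_*)$, so the orbit segment from $v_*$ to $w_*$ has both endpoints in $\Ucal$, and the convexity condition \eqref{c:convexity-flow} places the whole segment, in particular $x$, in $\Ucal$. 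When only one time is unbounded, say $t_n\to\infty$ and $s_n\to s_*$, then $\varphi^{t_n+s_*}(v_n)=\varphi^{s_*}(\varphi^{t_n}(v_n))\to\varphi^{s_*}(x)=w_*\in\Ucal$, so for large $n$ the endpoint $\varphi^{t_n+s_*}(v_n)\in\Ucal$, and convexity applied to the segment from $v_n$ places $\varphi^{t_n}(v_n)\in\Ucal$; hence $x\in\overline\Ucal$. I upgrade $x\in\overline\Ucal$ to $x\in\Ucal$ via the strict convexity \eqref{c:convexity}: on the backward segment $\varphi^{[-s_*,0]}(w_*)\subset\overline\Ucal$, any interior boundary contact would be a local minimum of $\rho$ along the orbit, forcing $X^2\rho\geq 0$ there in contradiction with \eqref{c:convexity}.

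The main case is $t_n,s_n\to\infty$; here I argue by contradiction. Suppose $x\notin\Ucal$. If $x\in\mathcal{M}\setminus\overline\Ucal$, then $\rho(x)\in(-\epsilon_0,0)$: the case $\rho(x)<-\epsilon_0$ is excluded since $\tilde V\subset\overline{\Sigma}\subset\{\rho\geq-\epsilon_0\}$, and $\rho(x)=-\epsilon_0$ is excluded by \cite[Lemma 1.7]{open} together with the observation that forward-exited orbits accumulate on $\{\rho=-\epsilon_0,\,X_1\rho<0\}$ while backward-exited ones accumulate on the disjoint set $\{\rho=-\epsilon_0,\,X_1\rho>0\}$. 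On $\{-\epsilon_0<\rho<0\}$ the extended flow moves $\rho$ at bounded speed on compact subsets, so the forward exit time $T_n^+\leq t_n$ and backward exit time $T_n^-\leq s_n$ satisfy $t_n-T_n^+$ and $s_n-T_n^-$ bounded; taking $T$ larger than both bounds gives $\varphi^{-T}(x)=\lim\varphi^{t_n-T}(v_n)\in\overline\Ucal$ and $\varphi^T(x)\in\overline\Ucal$, and convexity of $\overline\Ucal$ then forces $x\in\overline\Ucal$, a contradiction. If instead $x\in\partial\Ucal$, I distinguish whether the sequences $\varphi^{t_n}(v_n)$ and $\varphi^{-s_n}(w_n)$ remain in $\overline\Ucal$: in the ``both inside'' situation convexity gives $\varphi^{\pm T}(x)\in\overline\Ucal$ for every $T\geq 0$, so Proposition \ref{p:property-K}(2) applied with arbitrary neighborhoods of $K$ places $x\in K\subset\Ucal$, contradicting $x\in\partial\Ucal$; when either side approaches $x$ through points outside $\overline\Ucal$, the function $\rho\circ\varphi^t$ is forced to vanish on a one-sided interval at $t=0$, so $X\rho(x)=X^2\rho(x)=0$ at $x\in\partial\Ucal$, again contradicting \eqref{c:convexity}.

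The main obstacle I anticipate is this final subcase: handling $x\in\partial\Ucal$ when the approach is through orbit pieces that have already exited $\overline\Ucal$. The argument rests on combining the convexity of $\overline\Ucal$, the strict convexity at $\partial\Ucal$, and the non-reentry of exited orbits built into the extension $X=\psi(\rho)X_1$.
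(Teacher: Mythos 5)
Your route---extracting sequences $\varphi^{t_n}(v_n)\to x$, $\varphi^{-s_n}(w_n)\to x$ and splitting on the boundedness of the time sequences---is genuinely different from the paper's. The paper works with the sign of $X_1\rho$ near $\partial\Ucal$: taking $\varepsilon_1$ so small that $V\subset\{\rho\geq\varepsilon_1\}$, for any $x$ with $-\varepsilon_0\leq\rho(x)<\varepsilon_1$ and $X_1\rho(x)\leq 0$ the \emph{entire forward} $\varphi$-orbit of $x$ stays in $\{-\varepsilon_0<\rho<\varepsilon_1\}$, a set disjoint from $V$ (and symmetrically for the backward orbit when $X_1\rho(x)\geq 0$); combined with the $\varphi^t$-invariance of $\tilde V$ this kills $\tilde V\cap\{\rho<\varepsilon_1\}$ with no case analysis on times at all. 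Your bounded-time cases and the ``both sequences inside $\overline\Ucal$'' boundary subcase (which is essentially Proposition \ref{p:property-K}(2)) are correct.

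There are, however, two real gaps, both in the main case $t_n,s_n\to\infty$. First, the boundedness of $t_n-T_n^+$ does not follow from ``the extended flow moves $\rho$ at bounded speed on compact subsets'': what you actually need is a \emph{lower} bound on $|X\rho|$ along the exited part of the orbit, and $X\rho$ vanishes at tangential boundary crossings, so this is not a bounded-speed fact; the uniform exit-time bound is true, but it rests on the second-order strict convexity ($X\rho=0\Rightarrow X^2\rho<0$) and a compactness argument, which your one-line justification does not supply. Second, in the $x\in\partial\Ucal$ subcase, the ``either side approaches from outside'' argument only produces a contradiction when \emph{exactly one} of the two sequences lies outside $\overline\Ucal$: if both approach through $\Mcal\setminus\overline\Ucal$, you obtain only $\rho(\varphi^t(x))\leq 0$ for all $t$ with equality at $t=0$, hence $X\rho(x)=0$ and $X^2\rho(x)\leq 0$, which is consistent with (not contradictory to) the strict convexity $X^2\rho(x)<0$. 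To close that subcase you would need to use $X^2\rho(x)<0$ to push to a nearby time $t$ with $\varphi^t(x)\in\tilde V$ and $\rho(\varphi^t(x))\in(-\varepsilon_0,0)$ and then fall back on the $x\in\Mcal\setminus\overline\Ucal$ argument; that reduction is missing. The paper's invariant-set formulation sidesteps both difficulties and is substantially shorter.
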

\begin{proof}
Since $X$ vanishes on $\rho^{-1}(-\varepsilon_0)$, we see that $\tilde{V}\subset\{\rho\geq-\varepsilon_0\}$. There exists $\varepsilon_1\in(0,\varepsilon_0)$ such that $V\cap\{\rho<\varepsilon_1\}=\varnothing$, we claim that 
$$\tilde{V}\cap\{\rho<\varepsilon_1\}=\varnothing$$
and thus $\tilde{V}$ is also a compact subset of $\Ucal$.

To see this, we use the convexity condition: by \cite[Lemma 1.6]{open}, for any $x\in\{-\varepsilon_0\leq\rho<\varepsilon_1\}$, if $X_1\rho(x)\leq 0$,  there exists $T\geq0$, such that $e^{TX_1}(x)\in\{\rho=-\varepsilon_0\}$ and $\rho(e^{tX_1}(x))\in[-\varepsilon_0,\varepsilon_1)$ for any $t\in(0,T]$. Therefore for any $t\geq0$, $\varphi^t(x)\in\{-\varepsilon_0<\rho<\varepsilon_1\}$. The same applies to $-X_1$ and thus if $X_1\rho(x)\geq0$, then for any $t\leq0$, $\varphi^t(x)\in\{-\varepsilon_0<\rho<\varepsilon_1\}$. In both cases, we see that $x\not\in\tilde{V}$ and thus $\tilde{V}\cap\{\rho<\varepsilon_1\}=\varnothing$.
\end{proof}

\subsection{Dynamics on the cotangent bundle}
\label{s:cotangent}
Now we discuss the lift to the cotangent bundle. Let $p(x,\xi)=\langle X(x),\xi\rangle$, then the flow $\varphi^t$ can be lifted to the contangent space $T^\ast\Mcal$ as $e^{tH_p}(x,\xi)=(\varphi^t(x),(d\varphi^t(x))^{-T}\xi)$. For each $x\in K$, there is a dual flow/stable/unstable decomposition
\begin{align*}
    T_x^\ast\mathcal{M}=E^*_0(x)\oplus E^*_s(x)\oplus E^*_u(x),
\end{align*}
where $E^*_0(x)$ is the annihilator of $E_s(x)\oplus E_u(x)$, and similarly for $E^\ast_s$ and $E^\ast_u$. The hyperbolicity condition (H) implies a similar condition for this dual decomposition:
\begin{equation*}
\begin{split}
|(d\varphi^t(x))^{-T}\xi|\leq Ce^{-\theta|t|}|\xi|,&\;\;\; \xi\in E_s^\ast(x), \ \ t\geq 0;\\
|(d\varphi^t(x))^{-T}\xi|\leq Ce^{-\theta|t|}|\xi|,&\;\;\; \xi\in E_u^\ast(x),
\ \ t\leq0.
\end{split}
\end{equation*}
$E_s^\ast$ and $E_u^\ast$ can be further extended to vector bundles $E^\ast_\pm\subset T^\ast_{\Gamma_\pm}\mathcal{M}$ over $\Gamma_\pm$ such that
\begin{itemize}
    \item $E^\ast_+|_K=E^\ast_u$, $E^\ast_-|_K=E^\ast_s$, and $E^\ast_\pm$ depends continuously on $x\in\Gamma_\pm$.
    \item $E^\ast_\pm$ are invariant under $\varphi^t$ and $\langle X,
    \eta\rangle=0 $ for $\eta\in E^\ast_{\pm}$.
    \item There exists constants $\widetilde{C},\widetilde{\theta}>0$ such that for all $x\in \Gamma_\pm$ and $\xi\in E^*_\pm(x)$, as $t\to \mp\infty$, 
    \begin{align*}
        |(d\varphi^t(x))^{-T}\xi|\leq \widetilde{C}e^{-\widetilde{\theta}|t|}|\xi|.
    \end{align*}
    \item If $x\in\Gamma_\pm$, $\xi\in T^*_x\mathcal{M}\setminus E^*_\pm(x)$ such that $p(x,\xi)=0$, then as $t\to \mp\infty$, 
    \begin{align*}
        |(d\varphi^t(x))^{-T}\xi |\to\infty,\quad \frac{(d\varphi^t(x))^{-T}\xi}{|(d\varphi^t(x))^{-T}\xi|}\to E^*_\mp|_K.
    \end{align*}
\end{itemize}

We will need to localize to the energy shell $\{p=1\}$. In order to study the dynamics near $\{p=1\}$, we need the following lemma analogous to \cite[Lemma 1.10]{open}.
\begin{lem}
There exist continuous vector bundles $E_{\pm 0}^*$ over $\Gamma_{\pm}$ such that $E_{+0}^*|_K=E_u^*\oplus E_0^*$ and $E_{-0}^*|_K=E_s^*\oplus E_0^*$. Moreover, \begin{itemize}
    \item $E_{\pm 0}^*$ is invraint under $\varphi^t$.
    \item For all $x\in \Gamma_\pm$ and $\xi\in E^*_{\pm 0}$, we have
    $(d\varphi^t(x))^{-T}\xi\to E_0^*$ as $t\to \mp\infty$.
    \item If $x\in\Gamma_\pm$, $\xi\in T^*_x\mathcal{M}\setminus E^*_{\pm0}(x)$, then as $t\to \mp\infty$, 
    \begin{align*}
        |(d\varphi^t(x))^{-T}\xi |\to\infty,\quad \frac{(d\varphi^t(x))^{-T}\xi}{|(d\varphi^t(x))^{-T}\xi|}\to E^*_\mp|_K.
    \end{align*}
\end{itemize}
\end{lem}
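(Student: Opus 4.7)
The plan is to mimic the construction of $E_\pm^*$ in \cite[Lemma 1.10]{open}, augmenting each bundle by the flow direction $E_0^*$. I focus on $E_{+0}^*$ over $\Gamma_+$; the construction of $E_{-0}^*$ over $\Gamma_-$ is symmetric in time. Recall that the Hamilton flow $\Phi^t(x,\xi)=(\varphi^t(x),(d\varphi^t(x))^{-T}\xi)$ preserves the level sets of $p$, and over $K$ the splitting $T^*\Mcal|_K=E_u^*\oplus E_0^*\oplus E_s^*$ is flow invariant. The hyperbolicity condition (H), together with the identity $d\varphi^t(X)=X$, implies that under $(d\varphi^t)^{-T}$ with $t\to-\infty$, $E_u^*$ contracts exponentially, $E_s^*$ expands exponentially, and $E_0^*$ is neutral. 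Hence over $K$ the subbundle $E_{+0}^*:=E_u^*\oplus E_0^*$ is continuous, flow invariant of dimension $\dim E_u+1$, and is precisely the set of $\xi$ on which $(d\varphi^t)^{-T}\xi$ stays bounded as $t\leq 0$.

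To extend to $\Gamma_+$, I define
\begin{equation*}
    E_{+0}^*(x):=\bigl\{\xi\in T^*_x\Mcal \ :\ \sup_{t\leq 0}|(d\varphi^t(x))^{-T}\xi|<\infty\bigr\}.
\end{equation*}
By Proposition \ref{p:property-K}(1), $\varphi^t(x)\to K$ as $t\to-\infty$ for $x\in\Gamma_+$. Fix a continuous (not necessarily invariant) extension of the splitting $E_u^*\oplus E_0^*\oplus E_s^*$ from $K$ to a small neighborhood $V\supset K$ in $\Mcal$. For $T>0$ large enough that $y:=\varphi^{-T}(x)\in V$, push to $y$: set $\eta=(d\varphi^{-T}(x))^{-T}\xi$ and write $\eta=\eta_u+\eta_0+\eta_s$ in the extended splitting at $y$. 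The hyperbolic dichotomy near $K$, which persists along the backward orbit of $y$ since it converges to $K$, then shows that $(d\varphi^t(x))^{-T}\xi$ stays bounded as $t\to-\infty$ if and only if $\eta_s=0$. This identifies $E_{+0}^*(x)$ as a subspace of dimension $\dim E_u+1$ depending continuously on $x\in\Gamma_+$ and independent of the auxiliary extension.

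The listed properties then follow. Flow invariance is immediate from the boundedness characterization and the chain rule. The equality $E_{+0}^*|_K=E_u^*\oplus E_0^*$ holds by construction. For $\xi\in E_{+0}^*(x)$, invariance yields $(d\varphi^t(x))^{-T}\xi\in E_{+0}^*(\varphi^t(x))$, and as $\varphi^t(x)\to K$ this fiber approaches $E_u^*\oplus E_0^*$ in the Grassmannian. Writing $(d\varphi^t(x))^{-T}\xi=\alpha_u(t)+\alpha_0(t)$ in the continuous splitting near $K$, the component $\alpha_u(t)\in E_+^*$ decays exponentially as $t\to-\infty$ by the already established property of $E_+^*$, while $|\alpha_0(t)|$ stays bounded; hence $(d\varphi^t(x))^{-T}\xi$ converges to $E_0^*|_K$. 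For $\xi\in T^*_x\Mcal\setminus E_{+0}^*(x)$, the $\eta_s$-component at $y$ is nonzero and expands exponentially under $(d\varphi^t)^{-T}$, dominating all other components and forcing the renormalized direction to tend to $E_-^*|_K=E_s^*$ at $K$.

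The main obstacle is the Grassmannian convergence in the second paragraph: showing that the backward Hamilton flow attracts every subspace transverse to $E_s^*$ uniformly to $E_u^*\oplus E_0^*$, so that $E_{+0}^*$ really has the asserted dimension and varies continuously in $x$. This is parallel to the corresponding step in \cite[Lemma 1.10]{open} for $E_+^*$, with one additional neutral dimension included; since $E_0^*$ does not interfere with the $E_s^*$-expansion rate, the same exponential dichotomy argument applies with only notational modifications.
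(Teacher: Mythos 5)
Your proposal is correct in outline but takes a genuinely different route from the paper, and the route you choose leaves the hardest step less than fully justified.

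The paper sidesteps your ``main obstacle'' entirely: it uses the lamination $\Gamma_\pm=\bigcup_{y\in K}W^{u/s}(y)$ of the outgoing/incoming tails by strong unstable/stable manifolds (citing \cite[\S 3.3]{nozw}), and simply defines $E_{\pm 0}^*(x)$ as the annihilator of $T_xW^{u/s}(y)$. Continuity and flow invariance are then immediate from the lamination structure (no Grassmannian limit needs to be taken), the correct dimension is automatic, and the convergence statements follow from a short duality computation $\langle(d\varphi^t)^{-T}\xi,(d\varphi^t)v\rangle=\langle\xi,v\rangle$ with $v\in T_xW^{u/s}(y)$ together with projections via parallel transport onto $E_s^*,E_u^*$ over $K$. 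Your definition of $E_{+0}^*(x)$ by boundedness of the backward cotangent orbit does characterize the same subspace (this follows from the paper's duality argument), so your approach is not wrong; but with this definition you must \emph{prove} that the set is a subspace of the right dimension varying continuously in $x$, and you defer exactly that to ``the same exponential dichotomy argument'' as \cite[Lemma 1.10]{open}. That is the place where the real work lives. Unlike the $E_+^*$ case, the dichotomy here is expansion ($E_s^*$) against a \emph{neutral-plus-contracting} complement ($E_0^*\oplus E_u^*$), which still gives Grassmannian attraction of any subspace transverse to $E_s^*$, but this should be stated and checked rather than asserted to be a notational change.

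A second, smaller gap: when you write $(d\varphi^t(x))^{-T}\xi=\alpha_u(t)+\alpha_0(t)$ and claim $\alpha_u(t)\to 0$ exponentially ``by the already established property of $E_+^*$,'' this does not follow directly. The decay estimate for $E_+^*$ applies to the backward flow of a \emph{fixed} covector in $E_+^*$; here $\alpha_u(t)$ is the $E_+^*$-component of a time-dependent covector (the component being taken in a non-invariant auxiliary splitting near $K$), so you need an additional argument—e.g.\ comparing components at consecutive times using uniform hyperbolicity near $K$, or first establishing $E_+^*\subset E_{+0}^*$ over all of $\Gamma_+$ and using invariance. The paper avoids this by applying the projections $\pi_{s/u}$ at points $y_t\in K$ and using the definition $(d\varphi^t)^{-T}\xi\to E_{\pm 0}^*|_K$ directly. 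Your overall plan is sound, but to make the proof self-contained you would need to spell out the Grassmannian attraction for the enlarged neutral block and fix the decay estimate for $\alpha_u(t)$.
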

\begin{proof}
We recall that $\Gamma_\pm$ has a lamination by unstable/stable manifolds (see for example \cite[\S 3.3]{nozw})
\begin{equation*}
    \Gamma_\pm=\bigcup\limits_{y\in K} W^{u/s}(y).
\end{equation*}
We simply define $E_{\pm 0}^*(x)\subset T_{x}^*\Mcal$ to be the annihilator of $T_{x} W^{u/s}(y)$ for $x\in\Gamma_\pm$. The flow invariance is clear. We fix a Riemannian metric $\tilde{g}$ on $\Mcal$ and define $\tau_{x\to y}:T_x^*\Mcal\to T_y^*\Mcal$ by parallel transport.
For each $y\in K$, define the projections
\begin{align*}
    \pi_s(y):T_y^*\Mcal\to E_s^*(y),\quad \pi_u(y):T_y^*\Mcal\to E_u^*(y).
\end{align*}

For $x\in\Gamma_\pm$ and $\xi\in E_{\pm 0}^*(x)$, since $(d\varphi^t(x))^{-T}\xi\to E_{\pm0}^*|_K$ as $t\to\mp\infty$, we know for $y_t\in K$ with $\dist(\varphi^t(x),y_t)\to 0$,
$\pi_{s/u} \tau_{\varphi^t(x)\to y_t} (d\varphi^t)^{-T}\xi\to 0$ as $t\to\mp\infty$.
By hyperbolicity, we also have
$\pi_{u/s} \tau_{\varphi^t(x)\to y_t} (d\varphi^t)^{-T}\xi\to 0$ as $t\to\mp\infty$.
Thus $(d\varphi^t)^{-T}\xi\to E_0^*$.

For $x\in\Gamma_\pm$ and $\xi\in T_x^*\Mcal\setminus E_{\pm0}^*(x)$, there exists $v\in T_xW^{u/s}(y)$ such that $\langle \xi,v\rangle\neq 0$. Then
\begin{equation*}
    \langle (d\varphi^t)^{-T}\xi,(d\varphi^t)v\rangle=\langle \xi,(d\varphi^t)^{-1}(d\varphi^t)v\rangle=\langle \xi,v\rangle,\quad |(d\varphi^t )v|\leq Ce^{-\theta|t|},\quad t\to\mp\infty
\end{equation*}
which implies $|(d\varphi^t)^{-T}\xi|\to \infty$ and consequently 
$$\frac{(d\varphi^t(x))^{-T}\xi}{|(d\varphi^t(x))^{-T}\xi|}\to E^*_\mp|_K$$
by hyperbolicity.
\end{proof}
\subsection{Preliminaries on semiclassical and microlocal analysis}
We give a brief review of semiclassical analysis and refer to \cite{semibook}, \cite[Appendix E]{resbook} for more detailed discussions, and \cite[\S 2]{open} for materials related to the microlocal analysis for open hyperbolic systems.

We will always work on a compact manifold $\mathcal{M}$ without boundary. 
For a symbol on a manifold, we can define quantization by a partition of unity argument. This gives the (non-canonical) quantization map
\begin{align*}
    {\rm Op}_h: S^k_h(T^*\mathcal{M})\to \Psi^k_h(\mathcal{M})
\end{align*}
from the symbol class to the class of pseudodifferential operators. There is a canonical isomorphism called symbol map
\begin{align*}
    \sigma_h:\Psi^k_h(\mathcal{M})/h\Psi^{k-1}_h(\mathcal{M})\to S^k_h(T^*\mathcal{M})/hS^{k-1}_h(T^*\mathcal{M}).
\end{align*}

The semiclassical Sobolev space $H^s_h(\mathcal{M})$ is defined by the norm
\begin{align*}
    \|u\|_{H^s_h}=\|\langle hD\rangle^s u\|_{L^2}.
\end{align*}
An $h$-dependent distribution $u\in\mathcal{D}'(\mathcal{M})$ is called $h$-tempered if it lies in $\mathcal{H}^{-N}_{h,loc}$ for some $N$ with norm $\mathcal{O}(h^{-N})$.
\begin{itemize}
    \item For $a\in S^k_h(T^*\mathcal{M})$, we define its essential support ${\rm esssupp}\, a\subset \overline{T}^*\mathcal{M}$ such that $(x_0,\xi_0)\notin {\rm esssupp}\, a$ if and only if there exists a neighbourhood $W$ of $(x_0,\xi_0)$, with
\begin{align*}
    |\partial_x^\alpha\partial_\xi^\beta a(x,\xi)|\leq C h^N\langle \xi\rangle^{-N},\quad (x,\xi)\in W\cap T^*\mathcal{M}.
\end{align*} 
\item Suppose $A={\rm Op}_h(a)+\mathcal{O}(h^
\infty)_{\Psi^\infty}\in \Psi^k_h(\mathcal{M})$, then the wavefront set ${\rm WF}_h(A)\subset \overline{T}^*\mathcal{M}$ is defined to be ${\rm esssupp}\, a$. The elliptic set is defined as 
\begin{align*}
    {\rm ell}_h(A)=\{(x_0,\xi_0)\in\overline{T}^*\mathcal{M}\,:\, (\langle \xi\rangle^{-k}\sigma_h(A))(x_0,\xi_0)\neq 0\}.
\end{align*}
\item Given an $h$-tempered distribution $u\in \mathcal{D}'_h(\mathcal{M})$, we define its semiclassical wavefront set ${\rm WF}_h(u)\subset \overline{T}^*\mathcal{M}$ such that $(x_0,\xi_0)\notin {\rm WF}_h(u)$ if and only if there exists a neighbourhood $U$ of $(x_0,\xi_0)$ such that for any properly supported $A\in\Psi^k_h(\mathcal{M})$ with ${\rm WF}_h(A)\subset
U$,
\begin{align*}
    Au=\mathcal{O}(h^\infty)_{C^\infty}.
\end{align*}
\item Let $B(h):C_c^\infty (\mathcal{M}_2)\to\mathcal{D}'(\mathcal{M}_1)$ be an $h$-tempered family of operators, then ${\rm WF}'_h(B)\subset \overline{T}^*(\mathcal{M}_1\times\mathcal{M}_2)$ is defined as 
\begin{align*}
    {\rm WF}'_h(B)=\{(x,\xi,y,\eta)\,:\, (x,\xi,y,-\eta)\in {\rm WF}_h(\mathcal{K}_B)\}
\end{align*}
where $\mathcal{K}_B$ is the Schwartz kernel of $B$.
\end{itemize}


The wavefront set condition allows us to define the flat trace of an operator, see \cite[\S 8.2]{hormander}. 
\begin{prop}
Let $\mathcal{M}$ be a manifold. If $B:C^\infty(\mathcal{M}) \to\mathcal{D}'(\mathcal{M})$ satisfies
\begin{align*}
    {\rm WF}'(B)\cap \Delta(T^*\mathcal{M})=\varnothing,
\end{align*}
then $\iota^*K_B\in\mathcal{D}'(\mathcal{M})$ is well-defined and we can define
\begin{align*}
    {\rm tr}^\flat B=\int_\mathcal{M} \iota^*K_B(x) dx=\langle \iota^*K_B,1\rangle .
\end{align*}
\end{prop}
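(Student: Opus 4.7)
The plan is to realize this proposition as a direct application of H\"ormander's theorem on pullbacks of distributions by smooth maps (see \cite[Thm.~8.2.4]{hormander}), applied to the diagonal embedding. Let $\iota:\Mcal\to\Mcal\times\Mcal$, $\iota(x)=(x,x)$. The conormal bundle of $\iota$ (i.e.\ the set of covectors annihilating the pushforward of every tangent vector) is
\begin{equation*}
N^*\iota=\{(x,x,\xi,-\xi)\in T^*(\Mcal\times\Mcal)\setminus 0\,:\, x\in\Mcal,\,\xi\in T_x^*\Mcal\setminus 0\}.
\end{equation*}
H\"ormander's theorem guarantees that $\iota^* K_B\in\Dcal'(\Mcal)$ is well-defined as soon as $\WF(K_B)\cap N^*\iota=\varnothing$.

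First I would translate the hypothesis $\WF'(B)\cap\Delta(T^*\Mcal)=\varnothing$ into the form required by the pullback theorem. By the defining relation $\WF'(B)=\{(x,\xi,y,\eta):(x,\xi,y,-\eta)\in\WF(K_B)\}$, a point $(x,x,\xi,-\xi)\in\WF(K_B)$ corresponds to $(x,\xi,x,\xi)\in\WF'(B)\cap\Delta(T^*\Mcal)$. Hence the wavefront hypothesis is exactly $\WF(K_B)\cap N^*\iota=\varnothing$, so $\iota^*K_B$ exists as a distribution on $\Mcal$, with continuity in the H\"ormander topology on distributions having wavefront set in a closed cone disjoint from $N^*\iota$.

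Next I would explain how to integrate against $1$. In the application of interest $\Mcal$ is compact (without boundary), so the constant function $1$ lies in $\CI(\Mcal)$ and the pairing $\langle \iota^*K_B,1\rangle$ is well-defined; in the non-compact setting one would instead require compact support of $B$ in an appropriate sense, but that is not needed here. The two expressions in the statement then agree by the very definition of integration of a distribution against a test function. Concretely, if $\chi_j\in\CIc(\Mcal)$ is a partition of unity and $K_B$ is smoothed out in the microlocal directions complementary to $N^*\iota$ by a cutoff argument, then on each chart the pullback reduces to restricting the smooth kernel representation of $B$ modulo $\CI$ to the diagonal, and summing the integrals yields both expressions.

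There is essentially no obstacle beyond correctly matching the sign convention $\eta\leftrightarrow-\eta$ between $\WF'$ and $\WF$, which is why the diagonal $\Delta(T^*\Mcal)=\{(x,\xi,x,\xi)\}$ (rather than the antidiagonal) is the right obstruction set. One should also note that the continuity of the flat trace with respect to $B$ in the appropriate wavefront topology is automatic from the continuity of the pullback operation, which will be used implicitly later when applying the trace formula \eqref{e:localtrace}.
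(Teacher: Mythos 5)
Your argument is correct and is precisely the one the paper intends: the paper gives no proof of this proposition, citing instead H\"ormander \cite[\S 8.2]{hormander}, and your reconstruction is the standard application of the pullback theorem (Thm.~8.2.4 there) to the diagonal embedding, including the correct identification of $N^*\iota=\{(x,x,\xi,-\xi)\}$ and the observation that the sign flip in the definition of $\WF'$ converts the hypothesis $\WF'(B)\cap\Delta(T^*\Mcal)=\varnothing$ into exactly $\WF(K_B)\cap N^*\iota=\varnothing$. Your remark about compactness (or a compact-support hypothesis) being needed to pair $\iota^*K_B$ with $1$ is also apt; the paper works throughout on a compact $\Mcal$, so this is implicit in the setting.
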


Recall \cite[Lemma 3.2]{flattrace}, for a family of operators, we have 
\begin{lem}
\label{l:semiwf-flattrace}
If $P(h):C^\infty(\Mcal)\to \mathcal{D}'(\Mcal)$ is $h$-tempered and satisfies
\begin{itemize}
    \item ${\rm WF}_h'(P(h))\cap \Delta(S^*\Mcal)=\varnothing$;
    \item $\|AP(h)B\|_{L^2\to L^2}=\mathcal{O}(h^{-m})$ for $A,B\in \Psi^{\rm comp}_h(\mathcal{M})$;
\end{itemize}
then ${\rm tr}^\flat (P(h))$ is well-defined with
\begin{align*}
    {\rm tr}^\flat (P(h))=\mathcal{O}(h^{-2n-m}).
\end{align*}
\end{lem}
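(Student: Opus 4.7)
The plan is to combine a microlocal decomposition of $P(h)$ with a semiclassical Sobolev estimate on the Schwartz kernel restricted to the diagonal. Well-definedness of $\Tr^\flat(P(h))$ follows from the wavefront hypothesis via the preceding proposition, since ${\rm WF}'(P)\cap \Delta(T^*\Mcal\setminus 0)\subset {\rm WF}_h'(P)\cap \Delta(S^*\Mcal)=\varnothing$.

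For the quantitative bound, I would first pick $X\in \Psi^{\comp}_h(\Mcal)$ that is microlocally the identity on a large compact set $K\subset T^*\Mcal$. Writing $P=XPX+R$ with $R=XP(I-X)+(I-X)PX+(I-X)P(I-X)$, the semiclassical wavefront hypothesis ensures that, for sufficiently large $K$, ${\rm WF}_h'(R)$ is disjoint from a neighborhood of the diagonal $\Delta(\overline{T}^*\Mcal)$: on $\Delta(T^*\Mcal)$ either one is in the elliptic set of $X$ (killing the $(I-X)$ factors) or lies in fiber infinity (where ${\rm WF}_h'(P)\cap \Delta(S^*\Mcal)=\varnothing$). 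Consequently the Schwartz kernel of $R$ is $\Ocal(h^\infty)$ in $\CI$ in a neighborhood of the diagonal, so $\Tr^\flat(R)=\Ocal(h^\infty)$.

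The remaining task is to estimate $\Tr^\flat(XPX)=\int_\Mcal K_{XPX}(x,x)\,dx$ by bounding $\|K_{XPX}\|_{L^\infty(\Mcal\times \Mcal)}$ via semiclassical Sobolev embedding. Inserting a further cutoff $X'\in \Psi^{\comp}_h$ with $X'=I$ microlocally on $\WF_h(X)$, write (modulo $\Ocal(h^\infty)$):
\begin{equation*}
\langle hD\rangle^s\,(XPX)\,\langle hD\rangle^s=(\langle hD\rangle^s X)\,(X'PX')\,(X\langle hD\rangle^s).
\end{equation*}
The factor $\langle hD\rangle^s X$ has compactly supported symbol and, writing its kernel as an oscillatory integral and applying Plancherel, one obtains the Hilbert--Schmidt bound $\|\langle hD\rangle^s X\|_{\mathrm{HS}}=\Ocal(h^{-n/2})$; $X'PX'$ is bounded on $L^2$ with norm $\Ocal(h^{-m})$ by hypothesis; and $X\langle hD\rangle^s$ is uniformly bounded on $L^2$. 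The sub-multiplicativity $\|ABC\|_{\mathrm{HS}}\leq \|A\|_{\mathrm{HS}}\|B\|_{L^2\to L^2}\|C\|_{L^2\to L^2}$ then yields
\begin{equation*}
\|\langle hD_x\rangle^s\langle hD_y\rangle^s K_{XPX}\|_{L^2(\Mcal\times\Mcal)}=\Ocal(h^{-n/2-m}).
\end{equation*}
The elementary inequality $\langle (\xi,\eta)\rangle^s\leq \langle\xi\rangle^s\langle\eta\rangle^s$ transfers this into a bound on $\|K_{XPX}\|_{H^s_h(\Mcal\times \Mcal)}$, and the semiclassical Sobolev embedding $H^s_h\hookrightarrow L^\infty$ (for $s>n$, with constant $\Ocal(h^{-n})$ since $\dim(\Mcal\times \Mcal)=2n$) gives $\|K_{XPX}\|_{L^\infty}=\Ocal(h^{-3n/2-m})$. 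Integrating over the compact $\Mcal$ produces $|\Tr^\flat(XPX)|=\Ocal(h^{-2n-m})$, matching the claim.

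The main obstacle will be the first step: arranging the decomposition so that $R$ genuinely has negligible flat trace. The hypothesis only gives an $L^2$ bound after sandwiching by compactly microlocalized operators, so the manipulation $P=XPX+R$ must be justified at the level of Schwartz kernels rather than operator identities, using the wavefront exclusion to control smoothness near the diagonal. Once this microlocal bookkeeping is in place, the remaining estimate is a standard kernel-versus-operator-norm comparison via Hilbert--Schmidt and Sobolev embedding.
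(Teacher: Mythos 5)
Your argument is correct, and it follows the expected strategy: split $P$ via a compactly microlocalized cutoff $X$ into a near-diagonal piece $XPX$, kill the remainder $R$ using the wavefront hypothesis (choosing $K\supset\{|\xi|\leq R\}$ with $R$ so large that the diagonal outside $K\times K$ lies in the neighborhood of $\Delta(S^*\Mcal)$ where $\WF_h'(P)$ vanishes, and where the $(I-X)$ factors are $\Ocal(h^\infty)$ on $K$), then control the remaining piece using the sandwiched $L^2$ bound. Two small imprecisions: first, ``in the elliptic set of $X$'' should really be ``where $\sigma(X)=1+\Ocal(h^\infty)$'' — ellipticity of $X$ is not enough to make $I-X$ negligible; second, your own intermediate steps give $\|K_{XPX}\|_{L^\infty}=\Ocal(h^{-3n/2-m})$ (the Sobolev constant on $\Mcal\times\Mcal$ is $h^{-n}=h^{-(2n)/2}$, and the $H^s_h$ norm of the kernel is $\Ocal(h^{-n/2-m})$), so after integrating over the diagonal you obtain $\Ocal(h^{-3n/2-m})$, not $\Ocal(h^{-2n-m})$ as written. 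This is a strictly sharper bound than the lemma claims, so the conclusion still holds.

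Worth noting that the Sobolev-embedding step is unnecessarily lossy: since the kernel of $XPX$ is $C^\infty$ for fixed $h$ (because $X$ is compactly microlocalized), the operator $XPX$ is trace class on the compact manifold and $\Tr^\flat(XPX)=\Tr(XPX)$. Inserting the inner cutoff $X'$ as you do, one gets directly
\begin{equation*}
|\Tr(XPX)|\leq\|XX'PX'X\|_{\mathrm{tr}}+\Ocal(h^\infty)\leq\|X\|_{\mathrm{HS}}\,\|X'PX'\|_{L^2\to L^2}\,\|X\|_{\mathrm{HS}}+\Ocal(h^\infty)=\Ocal(h^{-n-m}),
\end{equation*}
using $\|X\|_{\mathrm{HS}}=\Ocal(h^{-n/2})$ exactly as in your Hilbert--Schmidt estimate. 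This avoids the $H^s_h\hookrightarrow L^\infty$ step entirely and gives the optimal power $h^{-n-m}$; both this and your $h^{-3n/2-m}$ imply the stated $\Ocal(h^{-2n-m})$. So the proof is sound, arrives by a slightly more elaborate path than necessary, and the constant claimed in the paper appears not to be tight.
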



We briefly recall some semiclassical estimates in \cite{open}. 
\begin{defi} 
Let $P\in\Psi^1_h(\mathcal{M})$, $L$ be a closed set in $\overline{T}^*\mathcal{M}$, we say 
\begin{align*}
    P\lesssim -h, \text{  on  } H^s_h \text{  microlocally near  }L
\end{align*}
if there exist operators $Y_1\in\Psi^s_h(\mathcal{M})$, $Y_2\in \Psi^{-s}_h(\mathcal{M})$, and $Z\in\Psi^0_h(\mathcal{M})$, with
\begin{align*}
    Y_1Y_2=1+\mathcal{O}(h^\infty) \text{  near  }L, \quad L\subset {\rm ell}_h(Z)
\end{align*}
and for each $N$ and small enough $h$ we have
\begin{align*}
    {\rm Im}\,\langle Y_1PY_2 u,u\rangle_{L^2}\leq -h\|Zu\|_{L^2}+\mathcal{O}(h^\infty)\|u\|_{H^{-N}_h},\quad u\in H^\frac{1}{2}_h.
\end{align*}
\end{defi}

\begin{prop}
\label{p:semiestimate}
Let $\mathcal{M}$ be a compact manifold. 
\begin{itemize}
    \item[(1)] Let $A\in\Psi^0_h(\mathcal{M})$. If $P\in\Psi^k_h(\mathcal{M})$ is elliptic on ${\rm WF}_h(A)$, then
    \begin{align*}
        \|Au\|_{H^m_h}\leq C\|Pu\|_{H^{m-k}_h}+\mathcal{O}(h^\infty)\|u\|_{H^{-N}_h}.
    \end{align*}
      \item[(2)] If $P\in\Psi^{1}_h(\mathcal{M})$, $Q\in \Psi^0_h(\mathcal{M})$ such that 
    \begin{align*}
        {\rm Im}\, \sigma_h(P)\leq 0 \text{ near } L,\quad {\rm Re}\, \sigma_h(Q)>0\text{ on }L,
    \end{align*}
    then ${\rm Im}(P-iMhQ)\lesssim -h$ on $H^s_h$ near $L$ for sufficienitly large constant $M$.
\end{itemize}
\end{prop}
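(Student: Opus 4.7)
\emph{Part (1)} is the standard microlocal elliptic estimate. The plan is to build a left parametrix for $P$ on $\WF_h(A)$: since $\sigma_h(P)$ is elliptic of order $k$ on a neighborhood $V$ of $\WF_h(A)$, I would iteratively construct $B \in \Psi^{-k}_h(\Mcal)$ with principal symbol $\sigma_h(A)/\sigma_h(P)$ on $V$, correcting at each order in $h$ and Borel-summing, so that
\[
BP = A + R, \qquad R = O(h^\infty)_{\Psi^{-\infty}}.
\]
Applying this to $u$ and using the continuity $B : H^{m-k}_h \to H^m_h$ uniform in $h$, one obtains
\[
\|Au\|_{H^m_h} \le \|BPu\|_{H^m_h} + \|Ru\|_{H^m_h} \le C\|Pu\|_{H^{m-k}_h} + O(h^\infty)\|u\|_{H^{-N}_h}.
\]
I anticipate no substantive obstacle here.

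\emph{Part (2).} Set $p = \sigma_h(P)$ and $q = \sigma_h(Q)$. By continuity there exist $\delta > 0$ and an open neighborhood $V$ of $L$ with $\Im p \le 0$ on $V$ and $\Re q \ge 2\delta$ on $V$. The plan is to construct the operators the definition requires and then reduce to the sharp G\aa rding inequality. Pick a real cutoff $b \in \CIc(T^*\Mcal)$ equal to $1$ on a smaller neighborhood $V' \Subset V$ of $L$ and supported in $V$; set $Y_1 = \Op_h(b\langle\xi\rangle^s) \in \Psi^s_h$, and let $Y_2 \in \Psi^{-s}_h$ be a right parametrix, $Y_1 Y_2 = I + O(h^\infty)_{\Psi^{-\infty}}$ microlocally on $V'$, using the ellipticity of $Y_1$ there. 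Choose $Z \in \Psi^0_h$ with $L \subset \el_h(Z)$, $\WF_h(Z) \Subset V'$, and $|\sigma_h(Z)|^2 \le b^2$.

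Writing $\Im(P - iMhQ) = \Im P - Mh\,\Re Q$ at the operator level, the symbolic calculus (using that $\sigma_h(Y_1)\sigma_h(Y_2)$ is real and equal to $1$ near $L$) gives
\[
\sigma_h\bigl(-\Im(Y_1(P - iMhQ)Y_2)\bigr) = -\Im p + Mh\,\Re q + O(h) \ge 2Mh\delta - O(h)
\]
on a neighborhood of $L$, which dominates $Mh\delta\,|\sigma_h(Z)|^2$ there once $M$ is large enough to absorb the $O(h)$ term. After multiplying by an appropriate cutoff to ensure global nonnegativity of the relevant symbol (treating the transition region outside $V'$ using $\WF_h(Z) \Subset V'$ and the microlocal identity $Y_1 Y_2 = I$ near $L$, so that contributions there are absorbed as $O(h^\infty)\|u\|_{H^{-N}_h}$), the sharp G\aa rding inequality yields
\[
-\Im\langle Y_1(P - iMhQ)Y_2 u, u\rangle \ge Mh\delta\,\|Zu\|^2_{L^2} - Ch\|u\|^2_{H^{-N}_h}
\]
with $C$ independent of $M$; enlarging $M$ absorbs the error into the main term, giving $\Im(P - iMhQ) \lesssim -h$ on $H^s_h$ near $L$.

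\emph{Main obstacle.} The chief care is in coordinating the three microlocal localizations of part (2): to $\el_h(Z)$ where positivity is exploited, to $\supp(b)$ where the conjugation behaves well, and to $V$ where the hypothesis $\Im p \le 0$ holds. One must verify that errors generated at each step fit into the $O(h^\infty)\|u\|_{H^{-N}_h}$ remainder allowed by the definition, which relies on the strict nesting $\WF_h(Z) \Subset V' \Subset V$ and on $Y_1 Y_2 = I$ microlocally near $L$. Once the symbol is prepared to be globally nonnegative, the sharp G\aa rding application itself is routine.
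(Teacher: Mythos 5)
Your part (1) is the standard parametrix construction and is fine; nothing more to say there.

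For part (2), your overall architecture — microlocalize $Y_1,Y_2,Z$ near $L$, compute the principal symbol of $-\Im(Y_1(P-iMhQ)Y_2)$ near $L$ to see $-\Im p + Mh\Re q \gtrsim Mh\delta$, then invoke sharp G\aa rding — is exactly the approach the paper intends (it simply cites \cite[Lemma 2.5]{open}). However, the final absorption step as written has a genuine gap. You claim the sharp G\aa rding inequality gives
\[
-\Im\langle Y_1(P-iMhQ)Y_2 u,u\rangle \ge Mh\delta\|Zu\|^2_{L^2} - Ch\|u\|^2_{H^{-N}_h}
\]
and that ``enlarging $M$ absorbs the error into the main term.'' This does not work: $\|Zu\|^2_{L^2}$ and $\|u\|^2_{H^{-N}_h}$ are incomparable norms. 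If $\WF_h(u)$ avoids $\el_h(Z)$, the first term on the right is $O(h^\infty)\|u\|^2_{H^{-N}_h}$ while the error term is not, so no choice of $M$ rescues the inequality. Moreover the definition of $\lesssim -h$ requires the residual error to be $O(h^\infty)\|u\|^2_{H^{-N}_h}$, and the raw sharp G\aa rding remainder is only $O(h)$ — you must exploit the compact microsupport of the operators to improve it.

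The missing ingredient is to keep the G\aa rding error in the $\|Zu\|^2_{L^2}$ scale: since $Y_1$ and $Y_2$ are microlocalized in $\supp b$, the operator $A := -\Im(Y_1(P-iMhQ)Y_2)$ is in $\Psi^{\rm comp}_h$ up to $O(h^\infty)$, so one can write $A = B^*\tilde A B + O(h^\infty)_{\Psi^{-\infty}}$ for a microlocal cutoff $B$ with $\sigma_h(B)=1$ on $\supp b$; sharp G\aa rding then gives $\langle Au,u\rangle \ge Mh\delta'\|Zu\|^2 - C h\|Bu\|^2 - O(h^\infty)\|u\|^2_{H^{-N}_h}$, and if $\WF_h(B)\subset \el_h(Z)$ the elliptic estimate converts $\|Bu\|^2$ into $C'\|Zu\|^2 + O(h^\infty)\|u\|^2_{H^{-N}_h}$, after which enlarging $M$ does absorb the $Ch\|Zu\|^2$. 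For this to work $Z$ must be \emph{elliptic on all of $\supp b$} (including the transition region), whereas you took $\WF_h(Z)\Subset V'$ where $b\equiv 1$, which is too small to control $\|Bu\|$. One must then also verify that the enlarged $\sigma_h(Z)$ is compatible with the positivity of the full symbol in the transition region, which is a nontrivial point; your choice $|\sigma_h(Z)|^2\le b^2$ is on the right track but needs to be coordinated with the new support requirement.

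Finally, a small quibble: your constant $C$ is not literally $M$-independent — the $Q$ terms contribute $O(Mh^2)$ to the subprincipal symbol — but this is harmless since for fixed $M$ and $h$ small one has $O(Mh^2)\ll Mh\delta$.
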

\begin{proof}
(1) is the standard elliptic estimate. (2) is a stronger version of \cite[Lemma 2.5]{open} with essentially the same proof.
\end{proof}

Now we recall the propagation estimates from \cite{open}:
\begin{defi}
Let $p\in S^1(T^*\mathcal{M})$ be a real-valued symbol.
Let $V,W$ be two open sets in  $\overline{T}^*\mathcal{M}$. We define the set ${\rm Con}_p(V,W)$ that is controlled by $V$ inside of $W$, to be the set of points $(x,\xi)\in\overline{T}^*\mathcal{M}$ such that there exists $T>0$ such that 
\begin{align*}
    e^{-TH_p}(x,\xi)\in V,\quad e^{-tH_p}(x,\xi)\in W, \text{ for } 0\leq t\leq T.
\end{align*}
\end{defi}
\begin{prop}
\label{p:propa}
\begin{itemize}
    \item[(1)]\cite[Lemma 2.6]{open} Let $P\in\Psi^{1}_h(\mathcal{M})$ satisfy $p={\rm Re}\,\sigma_h(P)\in S^1(T^*\mathcal{M},\mathbb{R})$, $L\subset T^*\mathcal{M}$ is invariant under $e^{tH_p}$ and $\Im \sigma_h(P)\leq 0$ near $L$. 
    \begin{itemize}
        \item If there exist $c,\gamma>0$ such that
    \begin{align*}
        \frac{|e^{tH_p}(x,\xi)|}{|\xi|}\geq ce^{\gamma|t|},\quad (x,\xi)\in L, t\leq 0,
    \end{align*}
    then there exists $s_0$ such that ${\rm Im}\, P\lesssim -h$ near $L$ on $H^s_h$ for $s>s_0$.
    \item If there exist $c,\gamma>0$ such that
    \begin{align*}
        \frac{|e^{tH_p}(x,\xi)|}{|\xi|}\geq ce^{\gamma|t|},\quad (x,\xi)\in L, t\geq 0,
    \end{align*}
    then there exists $s_0$ such that ${\rm Im}\, P\lesssim -h$ near $L$ on $H^s_h$ for $s<s_0$.
    \end{itemize} 
    \item[(2)]\cite[Lemma 2.7]{open} Assume $\sigma_h(P)=p-iq$ with $p\in S^1(T^*\mathcal{M},\mathbb{R})$, $L\subset T^*\mathcal{M}$ is compact and invariant under $e^{tH_p}$. Let $A, B, B_1\in \Psi^0_h(\mathcal{M})$, $s\in\mathbb{R}$ such that
    \begin{align*}
        {\rm WF}_h(A)\subset {\rm ell}_h(B_1), L\subset {\rm ell}_h(A), L\cap {\rm WF}_h(B)=\varnothing\\
        q\geq 0 \text{ near } {\rm WF}_h(B_1), {\rm Im}\, P\lesssim -h \text{ on } H^s_h \text{ near } L.
    \end{align*}
    Let
    \begin{align*}
        \Omega=\{\langle\xi\rangle^{-1}p=0\}\setminus {\rm Con}_p({\rm ell}_h(B);{\rm ell}_h(B_1))
    \end{align*}
    and assume for $(x,\xi)\in \Omega\cap {\rm WF}_h(A)$, locally uniformly
    \begin{align*}
        e^{tH_p}(x,\xi)\to L,\quad t\to -\infty;\quad  e^{tH_p}(x,\xi)\in {\rm WF}_h(B_1),\quad t\leq 0.
    \end{align*}
    Then for any $N$ and sufficiently small $h>0$, we have
    \begin{align*}
        \|Au\|_{H^s_h}\leq C\|Bu\|_{H^s_h}+Ch^{-1}\|B_1Pu\|_{H^s_h}+\mathcal{O}(h^\infty)\|u\|_{H^{-N}_h}.
    \end{align*}
\end{itemize}
\end{prop}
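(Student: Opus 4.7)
The plan is to establish both parts by positive commutator arguments, in which the hypotheses on the Hamiltonian flow of $p=\Re\sigma_h(P)$ supply the sign of the principal commutator.

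For part (1) I would take an escape function of the form $a = \chi^2 |\xi|^{2s}$, with $\chi$ a smooth cut-off localized near $L$, and compute
\begin{equation*}
H_p a = 2\chi |\xi|^{2s} H_p \chi + 2s\, \chi^2 |\xi|^{2s}\, H_p \log|\xi|.
\end{equation*}
The expansion assumption $|e^{tH_p}(x,\xi)|/|\xi| \geq c e^{\gamma|t|}$ forces $H_p \log|\xi| \geq \gamma > 0$ on $L$ in the relevant time direction; by flow-invariance of $L$ and continuity this persists in a neighborhood. The term $\chi |\xi|^{2s} H_p \chi$ can be arranged to have a definite sign outside a small neighborhood of $L$ by choosing $\chi$ to decrease along the $H_p$-flow in coordinates adapted to the dynamics. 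Then quantizing via $\Op_h$ and applying sharp G\aa rding, combined with $\Im \sigma_h(P) \leq 0$ near $L$ to absorb the subprincipal remainder, yields $\Im P \lesssim -h$ on $H^s_h$ for $s > s_0$ in the backward-expanding case (and $s < s_0$ in the forward-expanding case). The threshold $s_0$ marks where the weight $|\xi|^{2s}$ ceases to dominate the contribution from $\Im P$ itself.

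For part (2) the plan is a standard propagation-of-singularities argument combining elliptic regularity, propagation along $H_p$, the radial estimate at $L$ from part (1), and the absorption from $q \geq 0$. I would construct a single escape function $a$ interpolating three regimes: near $L$ it agrees with the radial weight from part (1); on $\Omega \cap {\rm WF}_h(A) \setminus L$ it satisfies $H_p a \leq 0$ (monotone decrease along the forward flow, possible because every orbit there either exits through ${\rm ell}_h(B)$ or approaches $L$); and it is globally supported in ${\rm ell}_h(B_1)$. The dynamical hypothesis that $e^{tH_p}(x,\xi) \in {\rm WF}_h(B_1)$ for all $t\leq 0$ while approaching $L$ as $t \to -\infty$ is precisely what allows this construction. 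The positive commutator estimate for $[P, \Op_h(a)]$ then combines three favourable signs: the absorption $\Im P \lesssim -h$ near $L$, the flow monotonicity of $a$ away from $L$, and the sign of $q$ on ${\rm WF}_h(B_1)$; rearranging gives the stated bound, with the $h^{-1}$ factor on $\|B_1 P u\|_{H^s_h}$ coming from the single derivative lost in a semiclassical commutator.

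The main obstacle will be the global construction of $a$ in part (2): matching the radial weight near $L$ smoothly with a flow-monotone weight on the rest of $\Omega \cap {\rm WF}_h(A)$, cutting off cleanly at the boundary of ${\rm ell}_h(B_1)$, and ensuring the total commutator has a definite sign across every transition region. A secondary issue is threading the Sobolev order $s$ consistently through the radial estimate, the elliptic estimate supplying control on ${\rm ell}_h(B)$, and the propagation estimate along $H_p$, so that the same exponent $s$ in the hypothesis $\Im P \lesssim -h$ on $H^s_h$ controls the conclusion for $\|Au\|_{H^s_h}$.
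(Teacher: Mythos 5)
The paper does not prove Proposition~\ref{p:propa}: it is stated as a verbatim recall of \cite[Lemmas 2.6 and 2.7]{open}, so there is no ``paper's own proof'' to match. Against the cited source, your sketch is in the right spirit (escape functions / positive commutators), but there are a few concrete discrepancies worth flagging.

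For part (1), the sign you extract from the expansion hypothesis is reversed. Since $|e^{tH_p}(x,\xi)|/|\xi|\ge ce^{\gamma|t|}$ for $t\le 0$ says the fiber norm grows in \emph{backward} time, it forces $H_p\log|\xi|\le -\gamma<0$ along $L$ (a source at fiber infinity), not $H_p\log|\xi|\ge\gamma>0$ as you wrote; your stated conclusion $s>s_0$ is nevertheless correct. Moreover, the hypothesis only gives a uniform exponential lower bound on the whole orbit, not a pointwise sign of $H_p\log|\xi|$, so the escape function must be the time-averaged weight $\langle\xi\rangle^{2s}$ smoothed along the flow over a window $[-T,0]$ before the derivative acquires a definite sign; a bare $\chi^2|\xi|^{2s}$ does not immediately work. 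The threshold $s_0$ comes from the competition between $2s\gamma$ and the size of $\Im\sigma_h(P)$ near $L$, as you say.

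For part (2), the proof in \cite{open} does not build a single global escape function. It chains three separate estimates: the radial (source) estimate of part~(1) near $L$, propagation of singularities along $H_p$ within $\el_h(B_1)$ until the orbit enters $\el_h(B)$ or a fixed neighborhood of $L$, and elliptic estimates off $\{\langle\xi\rangle^{-1}p=0\}$, then patches them by a compactness argument on $\WF_h(A)$. Your proposal to interpolate a single escape function across the source region, the flowout, and the transition into $\el_h(B_1)$ is a genuinely different route; it can be made to work in some settings, but it is considerably more delicate to arrange a definite-sign commutator across every interface, and you correctly identify this as the main obstacle. The patched version buys simplicity: each local estimate is standard, and compactness handles the gluing without any global sign condition. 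Either way, the absorption $q\ge 0$ on $\WF_h(B_1)$ and $\Im P\lesssim -h$ near $L$ are used exactly as you describe, and the $h^{-1}$ on $\|B_1Pu\|_{H^s_h}$ does indeed come from the single semiclassical derivative in the commutator.
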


\subsection{Spectral theory of open hyperbolic systems}
\label{s:spectral}
In \cite{open}, Dyatlov--Guillarmou prove that for open hyperbolic systems $(\overline{\Ucal},X)$, the restricted resolvent for $P=-iX$
$$R(\lambda):=\indic_\Ucal(P-\lambda)^{-1}\indic_\Ucal: C_c^\infty(\Ucal)\to\mathcal{D}'(\Ucal)$$
extends meromorphically from a half-plane $\Im\lambda>C_0$ to the whole complex plane $\mathbb{C}$ with poles of finite rank. These poles are called the Pollicott--Ruelle resonances for $P$ and the set of resonances is denoted by $\Res(P)$. Our notation here is only different from \cite{open} by considering $P=-iX$ instead of $X$ itself. The key is the construction of the microlocal weight function and anisotropic Sobolev spaces
\begin{prop}
\label{c:weight}
There exists $m\in C^\infty(S^*\mathcal{M};\mathbb{R})$ such that
\begin{itemize}
    \item $m=1$ in a neighbourhood of $\kappa(E^*_-)\supset \kappa(E^*_s)$;
    \item $m=-1$ in a neighbourhood of $\kappa(E^*_+)\supset \kappa(E^*_u)$;
    \item $H_pm\leq 0$ in a neighbourhood of $\{p=0\}\cap S^*\mathcal{M}$;
    \item $\supp m\subset \{\rho>-2\varepsilon_0\}$ and $\supp m\cap \{\rho=-\varepsilon_0\}\cap \{X_1\rho=0\}=\varnothing$.
\end{itemize}
\end{prop}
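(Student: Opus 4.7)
The plan is to construct $m$ as an escape function for the Hamilton flow $e^{tH_p}$ on $S^*\Mcal$, adapting the Faure--Sj\"{o}strand scheme \cite{FS} to the open setting as in \cite{open}. The dynamical picture is that on the energy shell $\{p=0\}\cap S^*\Mcal$, the set $\kappa(E^*_-)\supset\kappa(E^*_s)$ acts as a radial source and $\kappa(E^*_+)\supset\kappa(E^*_u)$ as a radial sink for the rescaled cotangent flow. I will set $m$ equal to $+1$ on a neighborhood of the source, $-1$ on a neighborhood of the sink, and interpolate monotonically along orbits in between.

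First I would use the convergence properties from the preceding lemma to choose small open conic neighborhoods $V_\pm\subset T^*\Mcal\setminus 0$ of $E^*_\pm$ over $\Gamma_\pm$ with $\overline{V_+}\cap\overline{V_-}=\varnothing$, such that $V_-$ is backward-invariant and $V_+$ forward-invariant under $e^{tH_p}$ in a neighborhood of $\pi^{-1}(\Sigma)$. This is enabled by the last bullet of that lemma: any orbit in $\{p=0\}$ not lying in $E^*_{\pm 0}$ escapes in direction to $\kappa(E^*_\mp|_K)$ as $t\to\mp\infty$, so $V_\pm$ can be chosen as narrow tubes around $E^*_\pm$ with the required invariance, and any orbit of $\{p=0\}\cap S^*\Mcal$ passing through $\pi^{-1}(\overline{\Sigma})$ enters $\kappa(V_-)$ in backward time and $\kappa(V_+)$ in forward time.

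I then set $m\equiv +1$ on $\kappa(V_-)$ and $m\equiv -1$ on $\kappa(V_+)$. In the transition region, I pick a smooth hypersurface $\Xi$ in $S^*\Mcal\cap\{p=0\}$ transverse to $H_p$ meeting every relevant orbit exactly once, define a local time function $\tau$ by $\tau|_\Xi=0$ and $H_p\tau\equiv 1$, and set $m=\chi(\tau)$ for a smooth decreasing $\chi:\RR\to[-1,1]$ that equals $+1$ before the orbit leaves $\kappa(V_-)$ and $-1$ after it enters $\kappa(V_+)$. This gives $H_p m=\chi'(\tau)\leq 0$ in the transition region and $H_p m=0$ on the constant pieces, hence $H_p m\leq 0$ on a neighborhood of $\{p=0\}\cap S^*\Mcal$.

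Finally, for the support condition I multiply by a base-space cutoff $\chi_0(x)$ with $\chi_0\equiv 1$ on a neighborhood of $\overline{\Sigma}$, supported in $\{\rho>-2\varepsilon_0\}$, and vanishing on a neighborhood of $\{\rho=-\varepsilon_0\}\cap\{X_1\rho=0\}$; such $\chi_0$ exists because $\overline{\Sigma}\cap\{\rho=-\varepsilon_0\}\cap\{X_1\rho=0\}=\varnothing$. By arranging the transition cutoff to be supported in a flow-neighborhood of $\pi^{-1}(\Sigma)$, one ensures $m$ is supported in $\pi^{-1}(\overline{\Sigma})\subset\pi^{-1}(\{\chi_0\equiv 1\})$, so multiplying by $\chi_0$ does not create positive contributions to $H_p m$ near $\{p=0\}\cap S^*\Mcal$. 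The main obstacle is in the transition step: constructing the global time function $\tau$ smoothly and matching compatibly with the constant values $\pm 1$ on $\kappa(V_\pm)$, which relies crucially on the forward/backward invariance of $V_\pm$ and the uniform convergence properties of $E^*_{\pm 0}$ proved in the preceding lemma.
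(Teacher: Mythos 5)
The paper does not prove this proposition directly; it is cited from Dyatlov--Guillarmou \cite[Lemma 3.1]{open}, and the paper's discussion in \S\ref{s:fixedpoints} (equation \eqref{eqn:fix_pt_weight}) reveals the structure of that construction: $m=m_- -m_+ +R(\chi_-\circ\pi-\chi_+\circ\pi)$, where $m_\pm$ are cotangent escape functions adapted to $E^*_\pm$ (\cite[Lemma 1.12]{open}), $\chi_\pm$ are \emph{base} escape functions adapted to $\Gamma_\pm$ satisfying $\pm X\chi_\pm\geq 0$ on $\Sigma$ (\cite[Lemma 1.8]{open}), and $R$ is a large constant chosen so the base term dominates the error in the monotonicity of $m_\pm$. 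Your transversal-plus-time-function approach is genuinely different, and I believe it does not work as stated.

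The central gap is your claim that \emph{any} orbit of $\{p=0\}\cap S^*\Mcal$ passing through $\pi^{-1}(\overline{\Sigma})$ enters $\kappa(V_-)$ in backward time and $\kappa(V_+)$ in forward time. This is false. The neighborhoods $V_\pm$ live over small neighborhoods of $\Gamma_\pm$, but a generic $x\in\Ucal\setminus(\Gamma_+\cup\Gamma_-)$ has base orbit $\varphi^t(x)$ leaving $\overline{\Ucal}$ in both time directions (and, once outside, never returning by convexity); the lifted orbit $e^{tH_p}(x,\xi)$ can therefore never enter $V_\pm$, since eventually the base is far from $\Gamma_\pm$. Similarly, for $x\in\Gamma_+\setminus\Gamma_-$ the forward base orbit escapes and the lifted orbit never enters $V_+$. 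The radial source/sink dichotomy and the limit behaviour $\frac{(d\varphi^t)^{-T}\xi}{|(d\varphi^t)^{-T}\xi|}\to E^*_\mp|_K$ hold only for base points in $\Gamma_\pm$; they do not cover the open dense set of escaping base orbits. Consequently there is no global transversal $\Xi$ meeting every relevant orbit exactly once, and the interpolation $m=\chi(\tau)$ is not defined on the escaping orbits. This in turn wrecks the final step: for an escaping orbit, $m$ must transition between a nonzero value and $0$ somewhere (the cutoff $\chi_0\circ\pi$ is forced to do work), and $H_p(m\,\chi_0\circ\pi)=(H_pm)\chi_0\circ\pi+m\,(X\chi_0)\circ\pi$ acquires the term $m\,(X\chi_0)\circ\pi$ with no useful sign. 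Arranging $\supp m\subset\pi^{-1}(\{\chi_0\equiv1\})$ does not resolve this, since an orbit in $\{p=0\}\cap S^*\Mcal$ with base in $\Ucal$ stays in $\overline{\Sigma}$ for all time, so $m$ cannot be constant $\pm1$ near $V_\pm$ and also vanish along the same orbit without $H_pm>0$ somewhere. The Dyatlov--Guillarmou device --- monotone base cutoffs $\chi_\pm$ multiplied by a large $R$ --- is precisely what handles these escaping orbits and absorbs the sign-indefinite errors; your proposal has no analogous mechanism.
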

Let
\begin{align*}
    \widetilde{m}(x,\xi)=(1-\chi_m(x,\xi))m(x,\xi)\log |\xi|
\end{align*}
where $\chi_m\in C^\infty(T^*\mathcal{M};[0,1])$ is equal to $1$ near the zero section and supported in some ball $\{|\xi|<c_0\}$ for some constant $c_0>0$.
Define the operator $G=G(h)\in \Psi^{0+}_h(\mathcal{M})$ such that
\begin{itemize}
\item $\sigma_h(G)=\widetilde{m}$;
\item $\WF_h(G)\subset\{\rho>-2\varepsilon_0\}$;
\item $\WF_h(G)\cap\{\rho=-\varepsilon_0\}\cap \{X_1\rho=0\}=\varnothing$.
\end{itemize}
The anisotropic Sobolev space is given by $\mathcal{H}^s_h=\exp(-sG(h))L^2(\mathcal{M})$ with norm
$\|u\|_{\mathcal{H}^s_h}=\|e^{sG(h)}u\|_{L^2(\mathcal{M})}$.
Take complex absorbing potentials
$Q_\infty \in \Psi^1_h$, $q_1\in C^\infty$ supported outside $\overline{\mathcal{U}}$ such that
\begin{itemize}
    \item $\indic_{\Sigma'}Q_\infty=Q_\infty\indic_{\Sigma'}=0$ for some neighborhood $\Sigma'$ of $\overline{\Sigma}$;
    \item $\{\rho\leq -2\varepsilon_0\}\cup (\{\rho=-\varepsilon_0\}\cap \{X_1\rho=0\})\subset {\rm ell}_h(Q_\infty)$;
    \item $\WF_h(G)\cap \WF_h(Q_\infty)=\varnothing$;    
    \item $\supp q_1\cap \overline{\mathcal{U}}=\varnothing$ and $q_1>0$ on $\{\rho=-\varepsilon_0\}$.
\end{itemize}
Let $$P_0=\frac{h}{i}X-i(Q_\infty+q_1):\mathcal{D}_h^s:=\{u\in \Hcal_h^s \,|\, P_0u\in \Hcal_h^s\}\to\mathcal{H}_h^s.$$ Dyatlov--Guillarmou \cite[Lemma 3.3]{open} proves that for any $C_1,C_2>0$, $s>s_0$ where $s_0$ depends on $C_1$, $h\in(0,h_0)$ where $h_0$ depends on $C_1,C_2,s$, $P_h(z):=P_0-z$ is a Fredholm operator of index zero for $z\in[-C_2h,C_2h]+i[-C_1h,1]$ and its inverse $R_0(z)=P_h(z)^{-1}$ is a meromorphic family of operators with poles of finite rank. The restriction of $R_0(z)$ to $\Ucal$ is independent of the choice of $Q_\infty$ and $q_1$, and the meromorphic extension of $R(\lambda)$ is given by $h\indic_\Ucal R_0(h\lambda)\indic_\Ucal$. In the proof, another complex absorbing potential $Q_\delta\in\Psi_h^{\comp}$ is introduced to compensate the non-invertibility of $P_0$ near the trapped set with the following properties
\begin{itemize}
\item $Q_\delta=\chi_\delta Q_\delta\chi_\delta$ for some $\chi_\delta\in C^\infty(\mathcal{M})$ supported in a $\delta$-neighborhood of $K$,
\item $\WF_h(Q_\delta)\subset\{|\xi|<\delta\}$;
\item $\{x\in K,\xi=0\}\subset\el_h(Q_\delta)$;
\item $\WF_h(G)\cap\WF_h(Q_\delta)=\varnothing$.
\end{itemize}
Then $\widetilde{P}_h(z):=P_h(z)-iQ_\delta:\mathcal{D}_h^s\to\mathcal{H}^s$ is invertible for $z\in[-C_2h,C_2h]+i[-C_1h,1]$ with inverse $\widetilde{R}_h(z)=\widetilde{P}_h(z)^{-1}:\mathcal{H}_h^s\to\mathcal{H}_h^s$ satisfying the estimate
\begin{equation}
\|\widetilde{R}_h(z)\|_{\mathcal{H}_h^s\to\mathcal{H}_h^s}\leq Ch^{-1}.
\end{equation}
See \cite[Lemma 3.2]{open}. For the proof of both Theorem \ref{t:upper} and \ref{t:lower}, we need various extensions of the resolvent estimates above, which will be described later.

\section{Upper bound on the number of resonances}
\label{s:upper}
In this section, we prove Theorem \ref{t:upper} on the upper bound for the number of resonances. First we make some modification to the setup in \cite{open} described in Section \ref{s:spectral}. The argument in \cite{open} concentrates near the energy surface $\{p=0\}$, but we will be interested near the energy surface $\{p=1\}$ to obtain polynomial upper bound by semiclassical rescaling. In particular, we need to introduce slightly different complex absorbing potential $W$ (corresponding to $Q_\delta$ in Dyatlov--Guillarmou \cite[(3.9)]{open}) which is microlocally supported near $E_0^\ast\cap p^{-1}(1)$ instead of $\{x\in K,\xi=0\}$.

\subsection{Modification of the complex absorbing operator}
First we describe the necessary modification to the complex absorbing potential. We choose $W\in \Psi^{\comp}_h$ satisfying the following properties
\begin{itemize}
\item $W=\chi W\chi$, for some $\chi\in C^\infty(\mathcal{M})$ supported in a neighborhood of $K$ such that $\chi=1$ near $p^{-1}(1)\cap E_0^*$;
\item $\WF_h(W)$ is contained in a neighborhood of $E_0^\ast\cap p^{-1}(1)$;
\item $\WF_h(W)\cap\WF_h(G)=\varnothing$;
\item $E_0^\ast\cap p^{-1}(1)\subset \el_h(W)$;
\item $\rank(W)=\mathcal{O}(h^{-n})$.
\end{itemize}

Now we follow \cite{open} to construct a microlocal weight function and the corresponding anisotropic Sobolev spaces. Let
\begin{align*}
    \widetilde{m}(x,\xi)=(1-\chi_m(x,\xi))m(x,\xi)\log |\xi|
\end{align*}
where $m$ is the weight function defined in Proposition \ref{c:weight}, $\chi_m\in C^\infty(T^*\mathcal{M};[0,1])$ is equal to $1$ near the zero section and supported in some ball $\{|\xi|<C\}$ for some constant $C$.
Define the operator $G=G(h)\in \Psi^{0+}_h(\mathcal{M})$ such that
\begin{itemize}
\item $\sigma_h(G)=\widetilde{m}$;
\item $\WF_h(G)\subset\{\rho>-2\epsilon_0\}$;
\item $\WF_h(G)\cap \WF_h(W)=\varnothing$;
\item $\WF_h(G)\cap\{\rho=-\varepsilon_0\}\cap \{X_1\rho=0\}=\varnothing$.
\end{itemize}
We can then define the anisotropic Sobolev space as $\mathcal{H}^s_h=\exp(-sG(h))L^2(\mathcal{M})$ with norm $\|u\|_{\mathcal{H}^s_h}=\|e^{sG(h)}u\|_{L^2(\mathcal{M})}$.

To make the operator invertible, we take the complex absorbing potentials $Q_\infty \in \Psi^1_h$, $q_1\in C^\infty$
supported outside $\overline{\mathcal{U}}$ as in \cite{open}, such that
\begin{itemize}
    \item $\indic_{\Sigma'}Q_\infty=Q_\infty\indic_{\Sigma'}=0$ for some neighborhood $\Sigma'$ of $\overline{\Sigma}$;
    \item $\{\rho\leq -2\epsilon_0\}\cup (\{\rho=-\epsilon_0\}\cap \{X_1\rho=0\})\subset {\rm ell}_h(Q_\infty)$;
    \item $\WF_h(G)\cap \WF_h(Q_\infty))=\varnothing$;    
    \item $\supp q_1\cap \overline{\mathcal{U}}=\varnothing$ and $q_1>0$ on $\{\rho=-\epsilon_0\}$.
\end{itemize}
Now we define
\begin{align*}
   P_0=\frac{h}{i}X-i(Q_\infty+q_1)\in \Psi^1_h,\quad  \Tilde{P}_h(z)=P_0-iMhW-z\in\Psi^1_h.
\end{align*}
We will be using propagation of singularities and radial estimates to prove $\tilde{P}_h(z)$ is invertible. The potential $Q_\infty$ and $q_1$ are supported outside  $\Ucal$ and will thus not affect the resolvent in $\Ucal$. We need to add them in order to obtain a global estimate on $\Mcal$. The potential $W$ is crucial in this construction in order to get estimates near $E_0^*\cap p^{-1}(1)$.

\subsection{Resolvent estimates}

\begin{thm} 
\label{t:resolvent}
Given $\beta>0$ there exists $M>0$ such that $z\in[1-h,1+h]+i[-\beta h,1]$, $\Tilde{P}_h(z)$ is invertible with $\Tilde{R}_h(z)=(P_0-iMhW-z)^{-1}:\mathcal{H}^s_h\to\mathcal{H}^s_h$ satisfying
$$\|\Tilde{R}_h(z)\|_{\mathcal{H}^s_h\to\mathcal{H}^s_h}\leq \frac{C}{\max(h,{\rm Im}\, z-Ch)}.$$
\end{thm}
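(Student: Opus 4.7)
The plan is to adapt the invertibility argument of Dyatlov--Guillarmou \cite[Lemma 3.3]{open}, originally developed near the energy surface $\{p=0\}$, to the high-energy surface $\{hp = \Re z\}$ with $\Re z\approx 1$, exploiting the modified complex absorbing potential $W$ which is placed at $E_0^\ast\cap p^{-1}(1)$ rather than at $\{x\in K,\xi=0\}$. I first conjugate by $e^{sG}$, setting $\tilde{P}_s(z) = e^{sG}\tilde{P}_h(z)e^{-sG}$; the problem reduces to an $L^2\to L^2$ bound. The principal symbol of $\tilde P_s(z)$ is $hp - z - i(Q_\infty+q_1) - iMh\sigma_h(W)$ with a subprincipal imaginary contribution $-shH_p\tilde m + \Ocal(h)$ from the conjugation, so by the third property of the weight $m$ in Proposition \ref{c:weight} the imaginary part is nonpositive near the characteristic set $\{hp = \Re z\}$.

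Next I patch together microlocal estimates. In the elliptic region (off the characteristic set and outside $\overline\Sigma$, handled by $Q_\infty$ and $q_1$), Proposition \ref{p:semiestimate}(1) controls $u$ by $h^{-1}\|\tilde P_s(z)u\|_{L^2}$. At the radial source $\kappa(E^\ast_+)$, where $m=-1$ supplies a large regularizing weight and the hyperbolicity of $E^\ast_+$ gives the exponential expansion, a high-regularity radial estimate controls $u$ microlocally for $s$ large; dually at the sink $\kappa(E^\ast_-)$ a low-regularity radial estimate gives a bound with input from a neighborhood of the source. Finally, Proposition \ref{p:propa}(2) propagates the estimate along the $H_p$ flow inside $p^{-1}(\Re z/h)\cap \Sigma$: using the extensions $E^\ast_{\pm 0}$ constructed in Section \ref{s:cotangent}, each trajectory on the characteristic set either leaves $\Sigma$ (and is absorbed by $Q_\infty$) or converges in forward/backward time to $\kappa(E^\ast_\mp)$ or into a neighborhood of $E_0^\ast\cap p^{-1}(1)$.

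The crucial new input is the microlocal bound near $E_0^\ast\cap p^{-1}(1)$. Since $E_0^\ast\cap p^{-1}(1)\subset \el_h(W)$ and $\WF_h(W)\cap \WF_h(G)=\varnothing$, conjugation by $e^{sG}$ does not affect $W$ microlocally, and $-\Im\sigma_h(\tilde P_s(z))\geq Mh\sigma_h(W) - \Ocal(h) - \Im z$ near this set. For $M$ sufficiently large (depending on $\beta$), Proposition \ref{p:semiestimate}(2) gives $\Im \tilde P_s(z)\lesssim -h$ on $L^2$ microlocally near $E_0^\ast\cap p^{-1}(1)$, which yields a direct microlocal $h^{-1}$ bound there. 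Combining this with the elliptic, radial, and propagation estimates closes the argument at $\Im z\leq Ch$. The improvement to the factor $1/\max(h,\Im z - Ch)$ when $\Im z > Ch$ follows from the identity
$$
\Im\langle \tilde P_s(z) u,u\rangle = -\Im z\,\|u\|^2 - Mh\langle Wu,u\rangle - \langle (Q_\infty+q_1)u,u\rangle + \Ocal(h)\|u\|^2,
$$
combined with the above microlocal control. Fredholm theory, using invertibility at $\Im z\approx 1$ (where $\tilde P_h(z)$ is semiclassically elliptic) and a standard deformation, then upgrades the a priori estimate to the claimed bounded inverse.

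The main obstacle is the dynamical closure of the propagation step: one must verify that every $H_p$-orbit on the characteristic set $\{hp=\Re z\}\cap\overline{\Sigma}$ which does not escape through $\el_h(Q_\infty)$ must eventually enter $\el_h(W)$ in either forward or backward time. This uses the lemma at the end of Section \ref{s:cotangent} essentially: for $x\in\Gamma_\pm$ and $\xi\in T^\ast_x\Mcal$ with $p(x,\xi)$ nonzero, either the rescaled cotangent direction blows up in the hyperbolic direction $E^\ast_\mp|_K$, causing the trajectory to leave any fixed energy shell under rescaling, or $\xi$ already lies in the extended bundle $E^\ast_{\pm 0}$ and the trajectory converges to the closed invariant set $E_0^\ast\cap p^{-1}(1)$, which is exactly where $W$ is elliptic. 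This is the sole place where the modification of the absorbing operator from $Q_\delta$ to $W$ intervenes, and where the construction of $E^\ast_{\pm 0}$ is used.
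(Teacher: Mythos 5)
Your proposal is correct and takes essentially the same route as the paper: the paper applies Proposition \ref{p:propa} with the invariant set $L=\pi^{-1}(L_-)\cup\pi^{-1}(L_+)\cup\kappa(E_s^*)\cup\kappa(E_u^*)\cup(E_0^*\cap p^{-1}(1))$, cites \cite[Lemma~3.2]{open} for the functional-analytic (Fredholm) step yielding $\|\tilde R_h(z)\|\le C/h$, and then obtains the improved $1/(\Im z-Ch)$ bound for $\Im z\ge C_1h$ from the contraction-semigroup estimate $\|e^{-it(P_0-iMhW-z)}\|_{\Hcal^s_h\to\Hcal^s_h}\le e^{(Ch-\Im z)t}$, which is proved by the same energy computation you write (expanding $\partial_t\|v\|^2_{\Hcal^s_h}$ and using $H_pm\le 0$, the sign of $W$, $Q_\infty$, $q_1$). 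Your write-up merely unpacks the black-box reference to \cite[Lemma~3.2]{open} into explicit elliptic/radial/propagation steps and phrases the second half as a quadratic-form inequality rather than a Gr\"onwall/semigroup bound, but these are the same argument.
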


We prove Theorem \ref{t:resolvent} following \cite[Lemma 3.2]{open}. The main difference is that we use $MhW$ instead of $Q_\delta$, which requires a further application of propagation estimate near $E_0^\ast\cap \{p=1\}$. The constant $M$ is chosen to be sufficiently large to make the propagation estimate work as in Proposition \ref{p:semiestimate} (2). Though this makes the estimate a little more complicated, it allows us to get slightly better upper bounds for counting of resonances.

\begin{proof}
We apply the propagation estimate Proposition \ref{p:propa} to $P_0-iMhW-z$ with
\begin{align*}
    L=\pi^{-1}(L_-)\cup\pi^{-1}(L_+)\cup \kappa(E_s^*)\cup \kappa(E_u^*)\cup (E_0^*\cap p^{-1}(1)),
\end{align*}
\begin{align*}
    B=0, \quad  \overline{\bigcup\limits_{t\geq 0}e^{-tH_p}(\nbd(\WF_h(A)))}\subset\el_h(B_1).
\end{align*}
Similar to \cite[Lemma 3.2]{open}, by a functional analytic argument we get the estimate
$$\|(P_0-iMhW-z)^{-1}\|_{\mathcal{H}^s_h\to\mathcal{H}^s_h}\leq \frac{C}{h}.$$

On the other hand, if $\Im z\geq C_1h$ for some sufficiently large $C_1>0$, we have
\begin{align*}
    \|(P_0-iMhW-z)^{-1}u\|_{\Hcal^s_h}&\leq \int_0^\infty \|e^{-it(P_0-iMhW-z)}u\|_{\Hcal^s_h}dt\\
    &\leq \int_0^\infty e^{-(\Im z-Ch)t}\|u\|_{\Hcal^s_h}dt\\
    &\leq \frac{1}{\Im z-Ch}\|u\|_{\Hcal^s_h}.
\end{align*}
In the second step we use
\begin{align*}
    \|e^{-it(P_0-iMhW-z)}u\|_{\Hcal^s_h}\leq e^{-(\Im z-Ch)}\|u\|_{\Hcal^s_h}.
\end{align*}
This is because if $u\in \Hcal^s_h$, and $v(t)=e^{-it(P_0-iMhW-z)}u$, then
\begin{align*}
    \partial_t v= -i(P_0-iMhW-z)v,\quad v|_{t=0}=u.
\end{align*}
Thus
\begin{align*}
    &\partial_t\|v(t)\|_{\Hcal^s_h}^2=2\Re \langle e^{sG(h)}\partial_t v,e^{sG(h)}v\rangle\\
    &=2\Im \langle e^{sG(h)}(P_0-iMhW-z) v,e^{sG(h)}v\rangle\\
    &\leq -2\Im z\|v\|_{\Hcal^s_h}^2+2\Im\langle [e^{sG(h)},P_0]v,e^{sG(h)}v\rangle+h|\langle (X^*+X)e^{sG(h)}v,e^{sG(h)}v\rangle|\\
    &\quad\quad\quad +h|\langle e^{sG(h)}MWv,e^{sG(h)}v\rangle|
    -2\Re\langle(Q_\infty+q_1) e^{sG(h)}v,e^{sG(h)}v\rangle\\
    &\leq 2(\tilde{C}h-\Im z)\|v\|_{\Hcal^s_h}^2+2s\langle ( [G,P_0]+\Ocal(h^2)_{\Psi_h^{-1+}}) e^{sG(h)}v,e^{sG(h)}v\rangle\\
    &\leq 2(Ch-\Im z)\|v\|_{\Hcal^s_h}^2,
\end{align*}
and
\begin{equation*}
    \|v(t)\|_{\Hcal^s_h}\leq e^{(Ch-\Im z)t}\|u\|_{\Hcal^s_h}.\qedhere
\end{equation*}
\end{proof}
\subsection{Proof of the upper bound}
Now we prove Theorem \ref{t:upper}, by combining Jensen's inequality and a resolvent identity.

Recall
\begin{align*}
    (P_0-z-iMhW)^{-1}(P_0-z)=I+i(P_0-z-iMhW)^{-1}MhW. 
\end{align*}
Let 
\begin{align*}
    F(z)=I+i(P_0-z-iMhW)^{-1}MhW, 
\end{align*}
then $F(z)$ is a holomorphic family of finite rank operators on the region $[1-C_1'h,1+C_1'h]+i[-C_2'h,1]$.
\begin{itemize}
    \item By the construction of $W$, ${\rm rank}\, W=\mathcal{O} (h^{-n})$ and 
    \begin{align*}
        \|(P_0-z-iMhW)^{-1}\|\lesssim h^{-1},\quad \|W\|\lesssim 1,
    \end{align*}
    we have $|F(z)|\leq \exp(Ch^{-n})$.
    \item For $z_0=1+iC_3h$ for sufficiently large $C_3>0$, we have 
    \begin{align*}
         \|(P_0-z_0-iMhW)^{-1}MhW\|\leq \frac{1}{2},
    \end{align*}
    and $|F(z_0)| \geq \exp(-Ch^{-n})$.
\end{itemize}
We rescale $[1-C_1h,1+C_1h]+i[-C_2h, 2C_3h]$ to $[h^{-1}-C_1,h^{-1}+C_1]+i[-C_2, 2C_3]$ and map it to the unit disc by a conformal map, with $z_0$ mapping to $0$, and a slightly larger set $ \subset [1-C_1'h,1+C_1'h]+i[-C_2'h, 2C_3'h]$ that is mapped to the disc $\{|z|\leq 2\}$.
By Jensen's formula, for any holomorphic function $f(z)$ on an open set containing the disc $\{|z|\leq r\}$,
\begin{align*}
    \log|f(0)|+\int_0^r \frac{n(r)}{r}dr=\frac{1}{2\pi}\int_0^{2\pi} \log |f(re^{i\theta})|d\theta,
\end{align*}
where $n(r)$ is the number of zeros inside the disc $\{|z|<r\}$. We apply this to the $F(z)$ on the rescaled region to get
\begin{align*}
    n(1)\leq\frac{1}{\log 2}\int_1^2\frac{n(r)}{r}dr=\mathcal{O}(h^{-n}).
\end{align*}
That is, the number of resonances has an upper bound
\begin{align*}
    \#{\rm Res}(P)\cap [1-C_1h,1+C_1h]+i[-C_2h,\infty)=\mathcal{O}(h^{-n}).
\end{align*}
Taking $E=h^{-1}$ proves our theorem \ref{t:upper}.

\section{Lower bound on the number of resonances}
\label{s:lower}
In this section we prove a local trace formula generalizing \cite{local} and deduce the lower bound on the number of resonances, Theorem \ref{t:lower}. Here we go back to the setup from \cite{open}, working near the energy level $\{p=0\}$, see Section \ref{s:spectral}. But as in Section \ref{s:upper}, we use $W$ instead of $Q_\delta$ to denote the complex absorbing potential. In particular, we require that 
\begin{itemize}
    \item $W=\chi W\chi$ for some $\chi \in C^\infty(\Ucal)$ supported in a neighborhood of $K$;
    \item $\{x\in K,\xi=0\}\subset \el_h(W)$;
    \item $\WF_h(W)\subset \{|\xi|<\delta\}$ for some $0<\delta<1$;
    \item $\rank (W)=\Ocal(h^{-n})$.
\end{itemize}
Let $P_h(z)=\frac{h}{i}X-i(Q_\infty+q_1)-z\in \Psi^1_h$ and $\tilde{P}_h(z)=P_h(z)-iW\in \Psi^1_h$. Then we have (see \cite[Lemma 3.2]{open}), but with a slightly larger window for the real part of $z$ as in \cite[Proposition 3.4]{zeta}:

\begin{lem}
Fix any $\varepsilon\in(0,1)$ and $C_1>0$, there exists $s>0$, $h_0>0$ such that for any $h\in(0,h_0)$ and $z\in [-h^\varepsilon,h^\varepsilon]+i[-C_1h,1]$, $\tilde{P}_h(z):\Dcal^s_h\to \Hcal^s_h$ is invertible, with a bounded inverse
\begin{equation}
\label{e:modified-resolvent}
    \tilde{R}_h(z):=\tilde{P}_h(z)^{-1}:\Hcal^s_h\to \Hcal^s_h
\end{equation}
and
\begin{equation}
\label{e:modified-resolvent-est}
    \|\tilde{R}_h(z)\|_{\Hcal^s_h\to \Hcal^s_h}\leq Ch^{-1}.
\end{equation}
\end{lem}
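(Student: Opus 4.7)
My plan is to adapt the proof of \cite[Lemma 3.2]{open} essentially verbatim, with the two modifications flagged in the statement. First, the complex absorbing potential $W$ replaces $Q_\delta$: since the only property used in \cite{open} is that $Q_\delta$ is elliptic on $\{x\in K,\xi=0\}$ (the missing piece of the control in the invariant set $L$), and our $W$ is elliptic there by construction, the same estimates apply with $W$ in place of $Q_\delta$. Second, the widening of the window for $\Re z$ from $[-Ch,Ch]$ to $[-h^\varepsilon,h^\varepsilon]$ is the point of \cite[Proposition 3.4]{zeta}, and the same argument carries over to our setting.

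Concretely, I would set
$$L=\pi^{-1}(L_-)\cup\pi^{-1}(L_+)\cup\kappa(E_s^*)\cup\kappa(E_u^*)\cup(E_0^*\cap K),$$
where $L_\pm\subset\overline{T}^*\Mcal$ are the radial sinks/sources near $\kappa(E_\mp^*)$ at fiber infinity coming from the propagation of $E^*_\pm$. Then I would check the hypotheses of Proposition \ref{p:propa}: the weight function from Proposition \ref{c:weight} has the right signs on $\kappa(E_s^*)$ and $\kappa(E_u^*)$, the complex absorbing potentials $Q_\infty,q_1$ take care of $\{\rho\leq -2\varepsilon_0\}$ and the glancing set $\{\rho=-\varepsilon_0\}\cap\{X_1\rho=0\}$, and the new $W$ handles the zero-section component $E_0^*\cap K$ via Proposition \ref{p:semiestimate}(2) with $M$ chosen large. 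This gives $\Im\tilde{P}_h(z)\lesssim -h$ microlocally near $L$ on $\Hcal^s_h$ for $s>s_0$. Using Proposition \ref{p:propa}(2) to propagate these estimates along the flow, together with the elliptic estimate off the characteristic variety $\{p=\Re z\}$ and the estimates from $Q_\infty,q_1$ elsewhere, yields
$$\|u\|_{\Hcal^s_h}\leq Ch^{-1}\|\tilde{P}_h(z)u\|_{\Hcal^s_h}+\Ocal(h^\infty)\|u\|_{H^{-N}_h},$$
which combined with the Fredholm property (index zero) established in \cite{open} gives invertibility and the claimed $h^{-1}$ bound.

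The only genuinely new point is uniformity in $z$ on the strip $|\Re z|\leq h^\varepsilon$. The characteristic set moves to $\{p=\Re z\}$, but by the convexity condition (C) and Proposition \ref{p:property-K} the flow on $\{|p|\leq h^\varepsilon\}\cap \overline{\Ucal}$ still reaches either $K$, $\Gamma_+\setminus K$, $\Gamma_-\setminus K$, or escapes to $\supp q_1\cup\el_h(Q_\infty)$ in finite time, and the relevant propagation times are $\Ocal(\log(1/h))$, so the error from shifting the energy level by $h^\varepsilon$ is $h^\varepsilon\log(1/h)=o(1)$ and does not affect the invariant-set structure. Near fiber infinity the radial estimates are unchanged since $\Re z$ is a lower-order perturbation. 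This is exactly the argument in \cite[Proposition 3.4]{zeta}, and I would simply cite it.

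The main obstacle, which is also the only step requiring care, is verifying that the microlocal propagation and radial estimates are uniform as $\Re z$ ranges over $[-h^\varepsilon,h^\varepsilon]$, not merely $[-Ch,Ch]$. Concretely one must check that the open neighborhoods of $L_\pm$ used in Proposition \ref{p:propa} can be chosen independently of $z$ in this window, and that the constants in the radial estimates do not degenerate. Both follow from the fact that the flow $e^{tH_p}$ is independent of $z$, and that the convergence $e^{tH_p}(x,\xi)\to L$ near $\{p=0\}\cap S^*\Mcal$ is a structural feature of the hyperbolic system depending only on $p$; the perturbation $\Re z\in[-h^\varepsilon,h^\varepsilon]$ enters only through the location of the characteristic set, which remains a $C^1$-small deformation of $\{p=0\}$ in the $h$-semiclassical regime.
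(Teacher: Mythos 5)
Your proof follows essentially the same route as the paper, which cites \cite[Lemma 3.2]{open} and the window-enlargement device of \cite[Proposition 3.4]{zeta} and records the key point in a remark: $H_{p-\Re z}=H_p$ is $z$-independent and the constant $-z$ drops out of all commutators, so the radial and propagation estimates carry over unchanged. Two small points deserve correction, both reflecting a conflation of the setup here with that of Section \ref{s:upper}. First, in this lemma $W\in\Psi_h^{\comp}$ is a full-order complex absorbing potential playing exactly the role of $Q_\delta$ from \cite[Lemma 3.2]{open} inside $\tilde P_h(z)=P_h(z)-iW$, with no $Mh$ prefactor; this is not the $-iMhW$ perturbation of Theorem \ref{t:resolvent}, where Proposition \ref{p:semiestimate}(2) is invoked with $M$ large. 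Here one has $\Im\sigma_h(\tilde P_h(z))\leq -w+\Ocal(h)$ bounded away from zero on $\el_h(W)\supset\{x\in K,\xi=0\}$ --- the uncontrolled set at the energy level $p=0$, not $E_0^*\cap p^{-1}(1)$ --- so the absorption is elliptic and no constant $M$ enters. Second, the claim that the relevant propagation times are $\Ocal(\log(1/h))$ is spurious: the times in Proposition \ref{p:propa}(2) are fixed and $h$-independent by local uniformity and compactness, and the enlarged window $|\Re z|\leq h^\varepsilon$ works precisely because the Hamiltonian flow and all commutator computations are $z$-independent, not because of a quantitative $h^\varepsilon\log(1/h)=o(1)$ bound. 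Neither issue blocks the argument; carrying out your plan with these corrections reproduces the paper's proof.
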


\begin{rem}
Here we notice that the real part of principal symbol of $\tilde{P}_h(z)$, $p-z$ may depend on $z\in[-h^\varepsilon,h^\varepsilon]$ and thus on $h$, but the corresponding Hamiltonian flow does not depend on $h$ and the extra term $-z$ is eliminated by any commutator. All the propagation estimates still work as in \cite{open} and thus we can extend \cite[Lemma 3.2]{open} to $z\in[-h^\varepsilon,h^\varepsilon]$.
\end{rem}

\subsection{Support conditions}
We present several lemmas concerning the support of a function when certain propagator or resolvent is applied. 

\begin{lem}
\label{l:support1}
Let $f\in C_c^\infty(\Sigma_+)$, then
\begin{itemize}
\item[(a)] for $t\geq 0$, 
\begin{equation}
\label{e:suppw}
    \supp e^{-ith^{-1}(-ihX-i(q_1+W))}f\subset \varphi^t(\supp f)\cup \left(\bigcup\limits_{s=0}^t\varphi^s(\supp W)\right)\subset \Sigma_+,
\end{equation}
where $\supp W$ is defined as the complement of the largest open set $U$ such that $\chi W = W\chi=0$ for any $\chi\in C_c^\infty(U)$.
\item[(b)] for $t\geq0$,
\begin{equation}
e^{-ith^{-1}\tilde{P}_h(0)}f=e^{-ith^{-1}(-ihX-i(q_1+W))}f
\end{equation}
\item[(c)] for $\Im z\geq C_0h$ with some large constant $C_0>0$, we have
\begin{equation}
\label{e:resolvent-noqinfty}
\tilde{R}_h(z)f=\frac{i}{h}\int_0^\infty e^{-ith^{-1}\tilde{P}_h(z)}fdt
=\frac{i}{h}\int_0^\infty e^{-ith^{-1}(-ihX-z-i(q_1+W))}fdt
\end{equation}
and thus
\begin{equation}
\label{e:resolvent-supp}
\supp\tilde{R}_h(z)f\subset\overline{\Sigma}_+.
\end{equation}
\end{itemize}
\end{lem}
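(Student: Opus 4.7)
The three parts are chained: (a) is the substantive support-propagation statement, (b) follows from (a) together with the cut-off property of $Q_\infty$, and (c) combines (b) with a Laplace-type representation of the resolvent.

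For (a), set $v(t):=e^{-ith^{-1}(-ihX-i(q_1+W))}f$. Differentiating, $v$ satisfies the inhomogeneous transport equation $(\partial_t+X)v+h^{-1}q_1v=-h^{-1}Wv$ with $v(0)=f$. Integrating along characteristics $s\mapsto\varphi^s(x_0)$ with integrating factor $\Phi_y(t,s):=\exp\bigl(-h^{-1}\int_s^t q_1(\varphi^{\tau-t}(y))\,d\tau\bigr)$ yields the Duhamel representation
\[
v(y,t)=\Phi_y(t,0)\,f(\varphi^{-t}(y))-h^{-1}\int_0^t\Phi_y(t,s)\,(Wv)(\varphi^{s-t}(y),s)\,ds.
\]
The decisive observation is that $q_1$ enters only multiplicatively through $\Phi_y$, so it damps but never enlarges the support; reading off the two terms gives the inclusion in \eqref{e:suppw}. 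Both pieces lie in $\Sigma_+$: $\varphi^t(\supp f)\subset\Sigma_+$ by forward-invariance of $\Sigma_+$, while $\varphi^s(\supp W)\subset\Sigma_+$ because $\supp W$ may be taken inside a small neighborhood of $K\subset\Ucal\subset\Sigma_+$.

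For (b), note that $\tilde{P}_h(0)$ differs from $-ihX-i(q_1+W)$ only by $-iQ_\infty$. By (a) the solution $v(t)$ stays supported in $\Sigma_+\subset\Sigma'$, so the assumption $Q_\infty\indic_{\Sigma'}=0$ forces $Q_\infty v(t)=0$ for all $t\geq 0$. Hence $v(t)$ also solves $\partial_t v=-ih^{-1}\tilde{P}_h(0)v$ with the same initial datum, and uniqueness of the Cauchy problem yields (b). For (c), I would use the energy estimate already employed in the proof of Theorem~\ref{t:resolvent} to obtain $\|e^{-ith^{-1}\tilde{P}_h(0)}\|_{\Hcal^s_h\to\Hcal^s_h}\leq e^{Ct}$, so that for $\Im z\geq C_0 h$ with $C_0$ sufficiently large the integral $(i/h)\int_0^\infty e^{-ith^{-1}\tilde{P}_h(z)}f\,dt$ converges in $\Hcal^s_h$; applying $\tilde{P}_h(z)$ turns the integrand into a total $t$-derivative and identifies the integral with $\tilde{R}_h(z)f$. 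Substituting (b) into the integrand produces the second equality in \eqref{e:resolvent-noqinfty}, and \eqref{e:resolvent-supp} follows from (a) upon taking the closure of the union over $t\geq 0$.

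The main technical nuisance I anticipate is making the cut-off property of $Q_\infty$ rigorous: the identity $Q_\infty\indic_{\Sigma'}=0$ must be read so that $Q_\infty v(t)=0$ whenever $v(t)$ is a distribution supported in $\Sigma'$, which is standard for operators whose Schwartz kernel vanishes on $\Sigma'\times\Mcal$ but requires care since $v(t)$ lives in the anisotropic space $\Hcal^s_h$. A minor point is that the union of supports over all $t\geq 0$ in (c) only lies in $\overline{\Sigma}_+$ after closure, which is why \eqref{e:resolvent-supp} is stated with $\overline{\Sigma}_+$ rather than $\Sigma_+$.
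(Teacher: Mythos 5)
Your proposal is correct and follows essentially the same route as the paper: part (a) is proved by pulling back along the flow and using $q_1$ as a pure integrating factor so that only the $W$ term can enlarge the support (the paper writes this as a derivative identity for $e^{h^{-1}\int_0^t q_1(\varphi^s(x))ds}w(t,\varphi^t(x))$, you write the equivalent Duhamel representation), part (b) follows from the support inclusion in (a) together with $Q_\infty\indic_{\Sigma'}=0$ and uniqueness of the Cauchy problem, and part (c) is the Laplace-transform representation of the resolvent for $\Im z\gg h$. The details you flag — the energy estimate $\|e^{-ith^{-1}\tilde{P}_h(0)}\|_{\Hcal^s_h\to\Hcal^s_h}\leq e^{Ct}$ and the passage to $\overline{\Sigma}_+$ after the infinite time integral — are exactly the right observations.
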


\begin{proof} For part (a), let 
$$w=w(t)=e^{-ith^{-1}(-ihX-i(q_1+W))}f\in L^2(\Mcal),\quad\quad  t\geq0$$
be the unique solution of 
\begin{align*}
    \partial_tw=-ih^{-1}(-ihX-i(q_1+W))w,\quad w|_{t=0}=f,
\end{align*}
and let
\begin{align*}
    v(t,x)=e^{h^{-1}\int_0^t q_1(\varphi^s(x))ds}w(t,\varphi^t(x)),
\end{align*}
then
\begin{align*}
    \partial_t v=-h^{-1} e^{h^{-1}\int_0^t q_1(\varphi^s(x))ds}Ww(t,\varphi^t(x)).
\end{align*}
Since $\supp Ww\subset \supp W$ and $v(0,x)=f(x)$, we have
\begin{align*}
    \supp v(t,\cdot)\subset (\supp f)\cup\left( \bigcup\limits_{s=0}^t \varphi^{-s}(\supp W)\right).
\end{align*}
Thus for $t\geq0$,
\begin{align*}
    \supp w(t,\cdot)\subset \varphi^t(\supp v(t,\cdot))\subset\varphi^t(\supp f)\cup\bigcup\limits_{s=0}^t \varphi^{s}(\supp W)\subset\Sigma_+.
\end{align*}
This proves part (a). Part (b) follows from part (a) by the choice of $Q_\infty$ that $Q_\infty\indic_{\Sigma_+}=0$, which implies that for $t\geq0$, $Q_\infty w(t)=0$ and thus $w$ also satisfies
\begin{align*}
    \partial_tw=-ih^{-1}\tilde{P}_h(0)w=-ih^{-1}(-ihX-i(q_1+W+Q_\infty))w,\quad w|_{t=0}=f.
\end{align*}
Part (c) is a direct consequence of part (b).
\end{proof}

\begin{lem}
\label{l:support2}
Let $f\in C_c^\infty(\Sigma_+)$.
\begin{itemize}
\item[(a)]  If $\supp f\cap\overline{\Ucal}=\varnothing$, then
for $t\geq 0$, 
$$e^{-ith^{-1}(-ihX-iW)}f=e^{-tX}f=f\circ\varphi^{-t}$$
and thus
\begin{equation}
    \supp e^{-ith^{-1}(-ihX-iW)}f\cap\overline{\mathcal{U}}=\varnothing;
\end{equation}
\item[(b)] for $t\geq0$,
$$e^{-ith^{-1}(-ihX-i(q_1+W))}f|_\mathcal{U}=
e^{-ith^{-1}(-ihX-iW)}f|_\mathcal{U}.$$
\item[(c)] for $\Im z\geq C_0h$ with some large constant $C_0>0$, we have
\begin{equation}
\label{e:resolvent-onlyw}
(\tilde{R}_h(z)f)|_\Ucal=\frac{i}{h}\int_0^\infty \left.\left(e^{-ith^{-1}\tilde{P}_h(z)}f\right)\right|_\Ucal dt
=\frac{i}{h}\int_0^\infty\left.
\left(e^{-ith^{-1}(-ihX-z-iW)}f\right)\right|_\Ucal dt.
\end{equation}
\end{itemize}

\end{lem}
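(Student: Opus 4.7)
The approach is to prove each of (a)--(c) by uniqueness of the Cauchy problem for $\partial_t + X + h^{-1}V$ (with appropriate absorbing potentials $V$) combined with careful support tracking based on the convexity of $\overline{\Ucal}$; parts (a) and (b) then feed into (c) via Lemma \ref{l:support1}(c). For part (a) I take $w(t,x):=f(\varphi^{-t}(x))$ as the candidate: it trivially solves $(\partial_t+X)w = 0$ with $w(0)=f$, so by uniqueness of solutions to a first-order linear transport equation it suffices to verify $Ww(t)\equiv 0$ for $t\geq 0$. Because $W = \chi W\chi$ with $\chi$ supported in a small neighbourhood of $K\subset\Ucal$, this reduces to $\supp w(t)$ being disjoint from $\overline{\Ucal}$, i.e.\ $\varphi^t(\supp f)\cap\overline{\Ucal}=\varnothing$. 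The geometric input is the forward-invariance statement: for $y\in\Sigma_+\setminus\overline{\Ucal}$ and $t\geq 0$, $\varphi^t(y)\notin\overline{\Ucal}$. Indeed, write $y=\varphi^{t_0}(y_0)$ with $y_0\in\Ucal$ and $t_0>0$; if $\varphi^{t+t_0}(y_0)\in\overline{\Ucal}$ for some $t\geq 0$, the convexity condition \eqref{c:convexity-flow} applied to $\overline{\Ucal}$ would force $\varphi^{t_0}(y_0)=y\in\overline{\Ucal}$, a contradiction.

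For part (b), set $v(t)=e^{-ith^{-1}(-ihX-i(q_1+W))}f$ and let $w(t)$ be as in (a). Then $u:=v-w$ solves $(\partial_t+X+h^{-1}(q_1+W))u = -h^{-1}q_1 w$ with $u(0)=0$, so Duhamel yields
\begin{equation*}
u(t) = -h^{-1}\int_0^t e^{-(t-s)h^{-1}(-ihX-i(q_1+W))}\bigl(q_1 w(s)\bigr)\,ds.
\end{equation*}
By Lemma \ref{l:support1}(a), $\supp w(s)\subset\Sigma_+$, hence $\supp(q_1 w(s))\subset\supp q_1\cap\Sigma_+\subset\Sigma_+\setminus\overline{\Ucal}$. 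I then reapply the argument of (a) with potential $q_1+W$ and initial datum $g=q_1 w(s)$: the explicit candidate is the transport of $g$ along $\varphi^\cdot$ weighted by the integrating factor $\exp\bigl(-h^{-1}\int_0^\tau q_1\circ\varphi^{\tau'-\tau}\,d\tau'\bigr)$, its support stays in $\Sigma_+\setminus\overline{\Ucal}$ by the same forward-invariance, and on this support $W$ vanishes so the candidate solves the full equation. Uniqueness then identifies the integrand with this candidate, which vanishes on $\Ucal$. Integrating in $s$ gives $u|_\Ucal = 0$.

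For part (c), combine the integral representation from Lemma \ref{l:support1}(c) with part (b) by factoring $e^{izt/h}$ out of $e^{-ith^{-1}\tilde{P}_h(z)}$: the pointwise restriction of the integrand to $\Ucal$ turns into $e^{-ith^{-1}(-ihX-z-iW)}f|_\Ucal$, and the identity follows.

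The main obstacle is the bootstrap step in (b): one has to reuse the uniqueness-plus-convexity mechanism of (a) inside the Duhamel integral with the twisted potential $q_1+W$ and inhomogeneous data $q_1 w(s)$. Once the forward-invariance of $\Sigma_+\setminus\overline{\Ucal}$ under $\varphi^t$ has been established in the course of (a), however, the remaining manipulations are routine chainings of uniqueness for linear first-order evolutions.
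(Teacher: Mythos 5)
Your parts (a) and (c) match the paper's argument, but your part (b) uses the Duhamel factorization in the opposite direction from the paper, which changes how much work is needed downstream. The paper writes
\begin{equation*}
e^{-ith^{-1}(-ihX-i(q_1+W))}- e^{-ith^{-1}(-ihX-iW)}
= -h^{-1}\int_0^{t} e^{-i(t-s)h^{-1}(-ihX-iW)}\,q_1\, e^{-ish^{-1}(-ihX-i(q_1+W))}\,ds,
\end{equation*}
so the outer propagator involves $W$ only, and part (a) applies \emph{as stated} to the integrand $q_1 v(s)$ (whose support lies in $\supp q_1\cap\Sigma_+$ by Lemma \ref{l:support1}(a)). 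You instead put the $(q_1+W)$-propagator on the outside and $q_1 w(s)$ inside, which forces you to re-run (a) with the $q_1$ integrating factor; you do this correctly (the weighted transport candidate solves $(\partial_\tau+X+h^{-1}q_1)\psi=0$ and $W\psi\equiv 0$ on its support, so it solves the full equation and uniqueness applies), but it is extra work that the paper's ordering avoids. Two smaller points: (i) the phrase ``let $w(t)$ be as in (a)'' is ambiguous between $f\circ\varphi^{-t}$ and $e^{-ith^{-1}(-ihX-iW)}f$; these differ in the setting of (b), and your forcing term $-h^{-1}q_1 w$ is only correct for the latter choice, so you should say so explicitly (with $w=f\circ\varphi^{-t}$ the forcing would be $-h^{-1}(q_1+W)w$ and the $Ww$ piece is \emph{not} supported outside $\overline{\Ucal}$); (ii) to get $\supp w(s)\subset\Sigma_+$ you cite Lemma \ref{l:support1}(a), which is stated for the $q_1+W$ propagator, not the $W$-only one you are actually using — the $W$-only version holds by the identical argument but deserves a remark.
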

\begin{proof}
For part (a), let $w=e^{-tX}f$, then for $t\geq0$, by convexity of $\overline{\Ucal}$,
$$\supp w=\varphi^t(\supp f)\subset\Sigma_+\setminus\overline{\Ucal}.$$
Therefore $w$ also satisfies
\begin{align*}
    \partial_tw=-ih^{-1}(-ihX-iW)w,\quad w|_{t=0}=f.
\end{align*}
For part (b), we use the following identity
\begin{align*}
     &\;e^{-ith^{-1}(-ihX-i(q_1+W))}- e^{-ith^{-1}(-ihX-iW)}\\
    =&\;-h^{-1}\int_0^t e^{-i(t-s)h^{-1}(-ihX-iW)}q_1 e^{-ish^{-1}(-ihX-i(q_1+W))}ds.
\end{align*}
and by \eqref{e:suppw},
$$\supp q_1e^{-ish^{-1}(-ihX-i(q_1+W))}f\subset \supp q_1\cap\Sigma_+.$$
Since $\supp q_1\cap\overline{\Ucal}=\varnothing$, we can apply part (a) to $q_1e^{-ish^{-1}(-ihX-i(q_1+W))}f$ to get
$$\supp(e^{-ith^{-1}(-ihX-i(q_1+W))}f- e^{-ith^{-1}(-ihX-iW)}f)\cap\overline{\Ucal}=\varnothing.$$
Part (c) is a direct corollary of part (b) and \eqref{e:resolvent-noqinfty}.
\end{proof}

\subsection{A wavefront set estimate for the modified resolvent}
Here we present the estimate on the semiclassical wavefront set for $\tilde{R}_h(z)$ on fiber infinity following \cite{flattrace}.

\begin{lem}
Fix any $\varepsilon\in(0,1)$ and $C_1>0$, for $z\in[-h^\varepsilon,h^\varepsilon]+i[-C_1h,1]$, the modified resolvent \eqref{e:modified-resolvent} satisfies
\begin{equation}
\label{e:mr-wf}
    {\rm WF}'_h(\tilde{R}_h(z))\cap S^*(\mathcal{U}\times\mathcal{U})\subset \kappa(\Delta(T^*\mathcal{U})\cup \Omega_+\cup (E^*_+\times E^*_-)\setminus\{0\})
\end{equation}
where $\Omega_+$ is the flowout
\begin{align*}
    \Omega_+=\{(e^{tH_p}(y,\eta),y,\eta)\,:\, p(y,\eta)=0, y\in\mathcal{U},\varphi^t(y)\in\mathcal{U}, t\geq0\}.
\end{align*}
\end{lem}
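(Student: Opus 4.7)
The plan is to exploit the integral representation from Lemma \ref{l:support1}(c), which holds for $\Im z\geq C_0 h$:
$$\tilde R_h(z) = \frac{i}{h}\int_0^\infty e^{-ith^{-1}\tilde P_h(z)}\,dt.$$
By microlocality of $\WF'_h$ and the meromorphic structure of $\tilde R_h(z)$ (finite-rank poles do not alter the wavefront set of the kernel), the wavefront set inclusion for $\Im z$ large extends to the entire region $z\in[-h^\varepsilon,h^\varepsilon]+i[-C_1h,1]$. Hence we may work with the integral formula directly.

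At fiber infinity, the Schwartz kernel of the propagator $e^{-ith^{-1}\tilde P_h(z)}$ is microlocalized on the graph of the symplectic flow $\{(e^{tH_p}(y,\eta),y,\eta):p(y,\eta)\approx 0\}$, since $W\in\Psi^{\comp}_h$ with $\WF_h(W)\subset\{|\xi|<\delta\}$ is invisible at fiber infinity, while Lemma \ref{l:support1} confines the effect of $Q_\infty,q_1$ outside $\mathcal{U}\times\mathcal{U}$. Integrating over $t\geq 0$ produces three contributions: the endpoint $t=0$ gives the diagonal $\Delta(T^*\mathcal{U})$; finite $t>0$ with $y,\varphi^t(y)\in\overline{\mathcal{U}}$ gives exactly $\Omega_+$; and the remaining contribution is the long-time limit $t\to\infty$.

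For the long-time limit, the constraint $y,\varphi^t(y)\in\overline{\mathcal{U}}$ as $t\to\infty$ forces $y\in\Gamma_-$ and $\varphi^t(y)\in\Gamma_+$, both near $K$ by Proposition \ref{p:property-K}(2). Writing $\xi_t:=(d\varphi^t(y))^{-T}\eta$, the trichotomy from Section \ref{s:cotangent} gives either $\eta\in E_-^*(y)$ (and $|\xi_t|\leq\widetilde{C}e^{-\widetilde\theta t}|\eta|$) or $\eta\notin E_-^*(y)$ (and $|\xi_t|\to\infty$ with direction converging to $E_+^*|_K$). A compactness argument on the joint radial compactification $\overline T^*(\mathcal{U}\times\mathcal{U})$ then shows that every limit point in $S^*(\mathcal{U}\times\mathcal{U})\setminus\kappa(\Delta\cup\Omega_+)$ has its first cotangent direction in $E_+^*$ (when $|\xi_t|\to\infty$) and its second cotangent direction in $E_-^*$ (when $|\eta|$ dominates in the rescaled limit, via the dual identity $\tilde R_h(z)\tilde P_h(z)=I$). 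Such a point therefore lies in $\kappa((E_+^*\times E_-^*)\setminus\{0\})$.

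The main technical obstacle will be making this last compactness step rigorous across the three regimes ($|\xi_t|\to\infty$ only, $|\eta|\to\infty$ only, or both) while simultaneously matching the joint fiber-infinity directions of the two cotangent factors. This is handled by the uniform convergence of $(d\varphi^t(y))^{-T}\eta/|(d\varphi^t(y))^{-T}\eta|\to E_+^*|_K$ on compact subsets of $\Gamma_-\setminus E_-^*$, together with its symmetric counterpart for $E_-^*$ on $\Gamma_+$, both recalled in Section \ref{s:cotangent}. The remaining ingredients are standard propagation of semiclassical singularities and the support statements in Lemma \ref{l:support1} and Lemma \ref{l:support2}.
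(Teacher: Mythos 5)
Your plan is not a proof of the stated lemma, and the route you sketch diverges in an important way from what actually works. The core gap is the claim that ``the wavefront set inclusion for $\Im z$ large extends to the entire region'' by meromorphic/holomorphic continuation of $\tilde R_h(z)$. First, a small point: $\tilde R_h(z)$ is \emph{holomorphic}, not merely meromorphic, in $[-h^\varepsilon,h^\varepsilon]+i[-C_1h,1]$ — the purpose of adding $-iW$ is exactly to kill the poles there — so ``finite-rank poles do not alter the wavefront set'' is based on a misreading. More seriously, even for a holomorphic family $z\mapsto u_z$ of $h$-tempered distributions, a semiclassical wavefront set bound valid for $\Im z\gg h$ does not automatically persist for $\Im z$ down to $-C_1 h$: $\WF_h$ is an upper-semicontinuous notion and can jump when the parameter moves. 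The paper never relies on the integral formula \eqref{e:resolvent-onlyw} to produce the wavefront bound for small $\Im z$. Instead, it proves a propagation-of-singularities estimate (in the style of \cite[Prop.~2.1]{flattrace}, with a second semiclassical parameter $\tilde h$ controlling the size of $|(\xi,\eta)|$) that holds uniformly throughout the strip, because the source/sink structure furnished by the escape function is $z$-independent; analytic continuation in $z$ only enters at the very end, and only to identify $\mathbbm{1}_\Ucal\tilde R_h(z)\mathbbm{1}_\Ucal$ with the restriction of a differently regularized resolvent for which the estimate was proven.

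A second issue: the part you label as ``the main technical obstacle'' — matching joint fiber-infinity directions in both cotangent factors for $t\to\infty$ — is the step the paper explicitly bypasses. The paper first proves the weaker inclusion $\WF_h'(\tilde R_h(z))\cap S^*(\Ucal\times\Ucal)\subset\kappa(\Delta\cup\Omega_+\cup(E_+^*\times p^{-1}(0))\setminus\{0\})$ using propagation on the $x$-side alone, then runs the identical argument for the flow generated by $-X$ (duality, acting on the $y$-side) to get $\kappa(\Delta\cup\Omega_+\cup(p^{-1}(0)\times E_-^*)\setminus\{0\})$, and intersects. Your plan tries to get $E_+^*\times E_-^*$ in one pass via a compactness argument on $\overline{T}^*(\Ucal\times\Ucal)$. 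That compactness argument is not merely ``technical'' — as written it is incomplete. The contribution from large $t$ in $\frac{i}{h}\int_0^\infty e^{-ith^{-1}\tilde P_h(z)}\,dt$ does not become negligible in $t$; it must be absorbed by a radial-point (source/sink) estimate near $\kappa(E_\pm^*)$, which is precisely what the propagation estimate in Proposition \ref{p:propa} delivers and what your compactness sketch does not supply. To turn your heuristic time-decomposition into a proof, you would essentially have to reproduce that radial estimate anyway, at which point you are back to the paper's route with the $\tilde h$-parameter and the $A,B$ test operators, and the duality simplification becomes the natural way to finish.
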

\begin{proof} The general strategy is the same as \cite[Proposition 2.1]{flattrace} and we only emphasize the difference and refer the reader to \cite{flattrace} for more details. Let $p^{-1}(0)=\{(x,\xi)\in T^*\Mcal: p(x,\xi)=0\}$. We only need to prove a weaker statement
\begin{equation}
\label{e:mr-wf-weak}
\WF_h'(\tilde{R}_h(z))\cap S^*(\mathcal{U}\times\mathcal{U})\subset \kappa(\Delta(T^*\mathcal{U})\cup \Omega_+\cup (E^*_+\times p^{-1}(0))\setminus\{0\})
\end{equation}
and then by a duality argument on the flow generated by $-X$, we can obtain 
$$\WF_h'(\tilde{R}_h(z))\cap S^*(\mathcal{U}\times\mathcal{U})\subset \kappa(\Delta(T^*\mathcal{U})\cup \Omega_+\cup (p^{-1}(0)\times E_-^\ast)\setminus\{0\})$$
and thus finishes the proof of \eqref{e:mr-wf}. Now we prove \eqref{e:mr-wf-weak}, that is, given
$$(x_0,\xi_0,y_0,\eta_0)\in \{(x,\xi,y,\eta)\in T^*\Ucal\times T^*\Ucal:|(\xi,\eta)|=1\}\setminus (\Delta\cup \Omega_+\cup (E_+^*\times p^{-1}(0))),$$
there exists $\chi,\psi\in C_c^\infty(\mathcal{U})$, such that $\chi(x_0)\neq0$, $\psi(y_0)\neq0$ and for $(\xi,\eta)$ in a conic neighbourhood of $(\xi_0,\eta_0)$, we have uniformly
\begin{equation}
\label{e:wfestimate} 
\int\chi(x)e^{-ix\cdot \xi/h}\Tilde{R}_h(z)(\psi(x)e^{ix\cdot \eta/h})dx=\mathcal{O}(h^\infty)\langle\xi,\eta\rangle^{-\infty}, \quad\langle \xi,\eta\rangle\to\infty.
\end{equation}
By propagation estimates, there exist $r>0$, and neighbourhoods $U$ of $(x_0,r\xi_0)$ and $V$ of $(y_0,r\eta_0)$, and $A,B\in\Psi^0_{h}(\mathcal{M})$ such that
\begin{align*}
    \|Au\|_{\mathcal{H}^s_{h}}\leq Ch^{-1}\|B(-ihX-z-i(Q_{\infty}+q_{1}+W ))u\|_{\mathcal{H}^s_{h}}+\mathcal{O}(h^\infty)\|u\|_{H_{h}^{-N}},\\
    U\subset\el_{h}(A),\quad (\{|\xi|\leq 1\}\cup V)\cap \WF_{h}(B)=\varnothing.
\end{align*}
Here one should think of $A$ microlocally supported near $(x_0,r\xi_0)$ and $B$ microlocally supported in a neighbourhood of $\{e^{-tH_p}(x_0,r\xi_0): t\geq 0\}$. The condition that $(x_0,\xi_0,y_0,\eta_0)\notin E_+^*\times p^{-1}(0)$ guarantees $\WF_h(B)\cap \{\xi\leq 1\}=\varnothing$ for sufficiently large $r$.

As in \cite{flattrace}, we introduce another independent semiclassical parameter $\tilde{h}\in(0,1)$ and $\tilde{h}\to0$ plays the role of $|(\xi,\eta)|\to+\infty$. Replacing $h$ by $h\tilde{h}$, we get $A_{\tilde{h}},B_{\tilde{h}}\in\Psi^0_{h\Tilde{h}}(\Mcal)$ such that
\begin{align*}
    \|A_{\tilde{h}}u\|_{\mathcal{H}^s_{h\tilde{h}}}\leq C(h\tilde{h})^{-1}\|B_{\tilde{h}}(-ih\tilde{h}X-\tilde{h}z-i(Q_{\infty,\tilde{h}}+q_{1}+W_{\tilde{h}} )u\|_{\mathcal{H}^s_{h\tilde{h}}}+\mathcal{O}((h\tilde{h})^\infty)\|u\|_{H_{h\tilde{h}}^{-N}},\\
    U\subset\el_{h\Tilde{h}}(A_{\tilde{h}}),\quad (\{|\xi|\leq 1\}\cup V)\cap \WF_{h\Tilde{h}}(B_{\tilde{h}})=\varnothing.
\end{align*}

Without loss of generality we may assume 
\begin{align*}
   A_{\tilde{h}}=\Op_{h\tilde{h}}(a),\quad B_{\tilde{h}}=\Op_{h\tilde{h}}(b),\quad W=\Op_h(w)
\end{align*}
with symbols $b\in S^0$ and $a,q\in C_c^\infty$ independent of $h$, and $\{|\xi|\leq 1\}\cap\supp b=\varnothing$.

The $W$ term can be dealed by \cite[Lemma 2.3]{flattrace}, and we obtain
\begin{align*}
    \|B_{\tilde{h}}Wu\|_{H^N_{h\tilde{h}}}=\mathcal{O}(h^\infty \tilde{h}^\infty)\|u\|_{H^{-N}_{h\tilde{h}}}.
\end{align*}

Taking $u(x)=(-ihX-z-i(\tilde{h}^{-1}Q_{\infty,\tilde{h}}+\tilde{h}^{-1}q_{1}+W))^{-1}(\psi(x)e^{ix\cdot r\eta_0/h\Tilde{h}})$ where $\supp\psi\times \{\eta_0\}\subset V$, by wavefront conditions
\begin{align*}
   B_{\tilde{h}}W_{\tilde{h}}=\Ocal(h^\infty\tilde{h}^\infty),\quad B_{\tilde{h}}(\psi(x)e^{ix\cdot r\eta_0/h\Tilde{h}})=\Ocal(h^\infty\tilde{h}^\infty).
\end{align*}

Thus
\begin{align*}
    &\|A_{\tilde{h}}u\|_{\mathcal{H}^s_{h\tilde{h}}}\\
    &\leq Ch^{-1}\|B_{\tilde{h}}(-ihX-z-i(\tilde{h}^{-1}Q_{\infty,\tilde{h}}+\tilde{h}^{-1}q_{1}+W))u\|_{\Hcal^s_{h\tilde{h}}}+\mathcal{O}(h^\infty\tilde{h}^\infty)\|u\|_{H^{-N}_{h\tilde{h}}}\\
    &\leq Ch^{-1}\|B_{\tilde{h}}(\psi(x)e^{ix\cdot r\eta_0/h\Tilde{h}})\|_{\Hcal^s_{h\tilde{h}}}+\mathcal{O}(h^\infty\tilde{h}^\infty)\|u\|_{H^{-N}_{h\tilde{h}}}
    \\
    &=\mathcal{O}(h^\infty\Tilde{h}^\infty).
 \end{align*}
This means $\WF_{h\Tilde{h}}(u)\cap U=\varnothing$, thus
\begin{align*}
        \int\chi(x)e^{-ix\cdot \xi_0/h\Tilde{h}}(-ihX-z-i(\tilde{h}^{-1}Q_{\infty,\tilde{h}}+\tilde{h}^{-1}q_{1}+W))^{-1}(\psi(x)e^{ix\cdot r\eta_0/h\Tilde{h}})dx=\mathcal{O}(h^\infty\Tilde{h}^\infty).
\end{align*}
Now we observe that \eqref{e:resolvent-onlyw} gives that $\mathbbm{1}_\Ucal\tilde{R}_h(z)\mathbbm{1}_\Ucal$ and $\mathbbm{1}_\Ucal(-ihX-z-iW)^{-1}\mathbbm{1}_\Ucal$ agree for $\Im z\geq C_0h$. A similar argument also applies to $\mathbbm{1}_\Ucal(-ihX-z-i(\tilde{h}^{-1}Q_{\infty,\tilde{h}}+\tilde{h}^{-1}q_{1}+W))^{-1}\mathbbm{1}_\Ucal$. Therefore $\mathbbm{1}_\Ucal\tilde{R}_h(z)\mathbbm{1}_\Ucal$  and $\mathbbm{1}_\Ucal(-ihX-z-i(\tilde{h}^{-1}Q_{\infty,\tilde{h}}+\tilde{h}^{-1}q_{1}+W))^{-1}\mathbbm{1}_\Ucal$ are both analytic continuations of 
\begin{align*}
     \mathbbm{1}_\Ucal(-ihX-z-iW)^{-1}\mathbbm{1}_\Ucal: C_c^\infty(\Ucal)\to\mathcal{D}'(\Ucal)
\end{align*}
in $z\in [-h^\varepsilon,h^\varepsilon]+i[-C_1h,1]$ and thus they are equal. Therefore
\begin{equation}
\label{e:hinfty}
\int\chi(x)e^{-ix\cdot \xi_0/h\Tilde{h}}\tilde{R}_h(z)(\psi(x)e^{ix\cdot r\eta_0/h\Tilde{h}})dx=\mathcal{O}(h^\infty\Tilde{h}^\infty).
\end{equation}
Moreover, \eqref{e:hinfty} holds uniformly with $(\xi_0,\eta_0)$ replaced by $(\xi,\eta)$ in a neighbourhood of $(\xi_0,\eta_0)$. This finishes the proof of \eqref{e:wfestimate} and thus the lemma.
\end{proof}

\subsection{Flat trace estimates}
\label{s:flattrace}
In this section we prove the following flat trace estimate following \cite[Section 3]{local} and \cite{flattrace}.
\begin{prop}
\label{trbound1}
Let $\chi\in C_c^\infty(\Ucal)$ such that $\chi=1$ near $K$, $t_0\in(0,\inf T_\gamma)$ be sufficiently small such that for $t\in[-t_0,t_0]$, $\varphi^t(\supp\chi)\subset\Ucal$ , then
\begin{align*}
    T(z)={\rm tr}^\flat (\chi e^{-it_0h^{-1}\tilde{P}_h(z)} \tilde{R}_h(z)\chi)
\end{align*}
is well-defined and holomorphic in $z$ in $[-h^\varepsilon,h^\varepsilon]+i[-C_1h,1]$. Moreover, we have 
\begin{align*}
    T(z)=\mathcal{O}(h^{-2n-2}).
\end{align*}
\end{prop}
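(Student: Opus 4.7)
The plan is to apply Lemma \ref{l:semiwf-flattrace} to $P(h):=\chi e^{-it_0 h^{-1}\tilde{P}_h(z)}\tilde{R}_h(z)\chi$. The lemma requires (i) $\WF_h'(P(h))\cap\Delta(S^*\Mcal)=\varnothing$ and (ii) $\|AP(h)B\|_{L^2\to L^2}=\Ocal(h^{-m})$ for $A,B\in\Psi^{\comp}_h(\Mcal)$; then ${\rm tr}^\flat P(h)=\Ocal(h^{-2n-m})$. To reach the stated bound I will aim for $m=2$. Holomorphy of $T(z)$ on $[-h^\varepsilon,h^\varepsilon]+i[-C_1h,1]$ then follows from the holomorphy of $\tilde{R}_h(z)$ established above, the entire dependence of $e^{-it_0 h^{-1}\tilde{P}_h(z)}$ on $z$ (since $\tilde{P}_h(z)$ depends linearly on $z$), and the continuity of the flat trace under holomorphic deformations whose wavefront avoids the diagonal uniformly.

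For (i), I will use the wavefront bound $\WF_h'(\tilde{R}_h(z))\cap S^*(\Ucal\times\Ucal)\subset\kappa(\Delta\cup\Omega_+\cup(E^*_+\times E^*_-)\setminus\{0\})$ established in the previous subsection. The propagator $e^{-it_0 h^{-1}\tilde{P}_h(z)}$ is a semiclassical Fourier integral operator whose canonical relation is the graph of the symplectic lift $\Phi_{t_0}(x,\xi)=(\varphi^{t_0}(x),(d\varphi^{t_0}(x))^{-T}\xi)$, so the composition shifts $\WF_h'(\tilde{R}_h(z))$ by $\Phi_{t_0}$ on the left factor. The shifted $\Delta$-piece becomes the graph of $\Phi_{t_0}$, which misses $\Delta(S^*\Mcal)$ because $t_0\in(0,\inf T_\gamma)$ rules out fixed points of $\varphi^{t_0}$. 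The $E^*_+\times E^*_-$-piece stays in this product (as $E^*_+$ is flow-invariant) and misses $\Delta(S^*\Mcal)$ because $E^*_+|_K\cap E^*_-|_K=\{0\}$ by hyperbolicity. The $\Omega_+$-piece becomes the forward flowout for times $s\geq t_0>0$; after $\chi$ localizes to a neighborhood of $K$, a diagonal intersection would require a closed orbit of period $s$ together with a covector in $p^{-1}(0)$ fixed as a direction by the Poincar\'e map, and since the Poincar\'e map on $(E^*_s\oplus E^*_u)|_\gamma$ has spectral radius uniformly away from $1$ by hyperbolicity, the actual wavefront of $\tilde{R}_h(z)$ at such points must be confined to the $E^*_+\times E^*_-$-piece already handled. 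Propagation estimates (Proposition \ref{p:propa}) along the forward flow together with the radial sink/source structure at $E^*_\pm$ make this refinement rigorous.

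For (ii), the resolvent estimate $\|\tilde{R}_h(z)\|_{\Hcal^s_h\to\Hcal^s_h}\leq Ch^{-1}$ combined with the propagator bound $\|e^{-it_0 h^{-1}\tilde{P}_h(z)}\|_{\Hcal^s_h\to\Hcal^s_h}=\Ocal(1)$, which follows from the same energy estimate used in the proof of Theorem \ref{t:resolvent} (noting that $t_0 h^{-1}\Im z$ remains bounded on the region), gives an $\Ocal(h^{-1})$ bound on the anisotropic Sobolev space. Converting to $L^2\to L^2$ through $A,B\in\Psi^{\comp}_h$ and the cutoffs $\chi$ absorbs at most one additional factor of $h^{-1}$ from commutators of $\chi$ with the weight operator $e^{sG(h)}$, giving the operator norm bound with $m=2$ and hence $T(z)=\Ocal(h^{-2n-2})$. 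The main obstacle will be the last step of (i): ruling out diagonal wavefront contributions from closed orbits in the shifted $\Omega_+$ requires a microlocal refinement beyond the upper bound stated above, and this refinement is precisely the technical input which later converts the flat trace into the sum over closed orbits appearing in the local trace formula \eqref{e:localtrace}.
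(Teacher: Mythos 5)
Your overall framework matches the paper's: apply Lemma \ref{l:semiwf-flattrace} after establishing (i) a diagonal-avoiding wavefront bound and (ii) an $L^2\to L^2$ bound with $m=2$. However, your argument has a genuine gap at the $\Omega_+$ step of (i), and the reasoning you give for (ii) is spurious even though the numerical outcome agrees.

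On (i): you argue that a diagonal intersection in the shifted $\Omega_+$ would require a closed orbit together with a covector in $p^{-1}(0)$ that is \emph{fixed as a direction} by the Poincar\'e map, and you then appeal vaguely to ``propagation estimates \ldots together with the radial sink/source structure'' to dispose of this; your closing sentence explicitly flags this as an unresolved obstacle. There is no obstacle. The set $\Delta(S^*\Mcal)$ is the image under the \emph{joint} radial compactification $\kappa:T^*(\Mcal\times\Mcal)\setminus 0\to S^*(\Mcal\times\Mcal)$ of $\Delta(T^*\Mcal)\setminus 0$, so $\kappa(x,\xi,y,\eta)\in\Delta(S^*\Mcal)$ forces $x=y$ and $\xi=\eta$ \emph{exactly}, not merely up to a positive scalar. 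A point of the shifted $\Omega_+$ in $\Delta(S^*\Mcal)$ would therefore give $e^{(t+t_0)H_p}(y,\eta)=(y,\eta)$ with $t+t_0\geq t_0>0$, $\eta\neq 0$, $p(y,\eta)=0$. Over a periodic point the $1$-eigenspace of $(d\varphi^{t+t_0}(y))^{-T}$ is exactly $E_0^*(y)$ (hyperbolicity of the Poincar\'e map), and $p$ does not vanish on $E_0^*\setminus 0$ --- contradiction. So the shifted $\Omega_+$ misses $\Delta(S^*\Mcal)$ directly; no further refinement of the resolvent wavefront bound is needed. This is the reasoning the paper uses, and it closes the argument.

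On (ii): you claim the conversion from the anisotropic bound to $L^2\to L^2$ through $A,B\in\Psi^{\comp}_h$ ``absorbs at most one additional factor of $h^{-1}$ from commutators of $\chi$ with $e^{sG(h)}$.'' Commutators with the weight are \emph{lower} order and gain a power of $h$, and for $A,B\in\Psi^{\comp}_h$ the operators $Ae^{-sG}$ and $e^{sG}B$ are compactly supported pseudodifferential operators bounded on $L^2$ uniformly in $h$; so this conversion is free and does not produce an $h^{-1}$. In the paper the second factor of $h^{-1}$ comes from a different place: one writes $e^{-it_0 h^{-1}\tilde P_h(0)}$ via the Duhamel/propagator identity \eqref{e:propagator-relation}, the $Q_\infty$ and $q_1$ pieces vanish by the support conditions, and the $W$ piece carries the explicit prefactor $h^{-1}$ in front of the time integral. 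That Duhamel decomposition is also what replaces your informal ``the propagator is an FIO with canonical relation the graph of $\Phi_{t_0}$'': because of the absorbing potentials, $e^{-it_0h^{-1}\tilde P_h(z)}$ is not literally the flow FIO, and the paper separates the genuine FIO $e^{-t_0X}$ from the $W$-correction (which only adds a $\{\xi=0\}$-piece to the wavefront) and the $Q_\infty+q_1$-correction (which vanishes after applying $\chi$). You should adopt this decomposition both to make the wavefront step rigorous and to obtain the $L^2$ bound for the correct reason.
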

We use  Lemma \ref{l:semiwf-flattrace} to prove Proposition \ref{trbound1}. First we check the semiclassical wavefront set condition for $e^{-it_0h^{-1}\tilde{P}_h(z)}\tilde{R}_h(z)$:
\begin{lem} 
Under the hypothesis of Proposition \ref{trbound1}, we have
\begin{align*}
{\rm WF}_h'(\chi e^{-it_0h^{-1}\tilde{P}_h(z)}\tilde{R}_h(z)\chi)\cap S^*(\mathcal{M}\times\mathcal{M})&\subset  \\
\kappa\big(\{(x,\xi,y,\eta)\,:\,(e^{-t_0H_p}(x,\xi),y,\eta)\in \Delta(T^*\mathcal{U})\cup&\; \Omega_+\cup E^*_+\times E^*_-\setminus\{0\} \text{ or } \xi=0,\eta\neq 0\}\big).
\end{align*}
In particular, $\WF_h'(\chi e^{-it_0h^{-1}\tilde{P}_h(z)}\tilde{R}_h(z)\chi)\cap\Delta(S^\ast\mathcal{M})=\varnothing$.
\end{lem}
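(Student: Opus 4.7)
I would apply the composition rule for semiclassical wavefront sets of operators to the product $\chi\, A\, B\, \chi$, where $A = e^{-it_0h^{-1}\tilde{P}_h(z)}$ is the propagator and $B = \tilde{R}_h(z)$ is the modified resolvent, and then check the diagonal exclusion case by case.

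First I compute $\WF_h'(A)$. The non-self-adjoint terms $-i(Q_\infty + q_1 + W)$ in $\tilde{P}_h(z)$ are lower-order perturbations that do not affect the bicharacteristic flow of $p$, so semiclassical propagation of singularities (essentially Egorov's theorem for the evolution $\partial_t u = -ih^{-1}\tilde{P}_h(z)u$) gives
\begin{equation*}
\WF_h'(A)\cap S^*(\mathcal{M}\times\mathcal{M}) \subset \{((x,\xi),(x',\xi'))\in S^*(\mathcal{M}\times\mathcal{M}) : (x,\xi) = e^{t_0 H_p}(x',\xi')\},
\end{equation*}
reflecting the fact that at leading order $A$ acts by pullback along $\varphi^{-t_0}$. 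Combined with the bound \eqref{e:mr-wf} for $B$, H\"ormander's composition rule yields the following. At any fiber-infinity point of $\WF_h'(AB)$, either there is an intermediate covector $(x',\xi')$ through which $\WF_h'(A)$ and $\WF_h'(B)$ compose cleanly, giving $(x',\xi') = e^{-t_0H_p}(x,\xi)$ together with one of the first three alternatives, or the composition degenerates at the zero section of the intermediate variable. In the latter case the ``horizontal'' piece $(\{0\}\times E^*_-)\setminus\{0\}$ of $\WF_h'(B)$ contributes points $(x,0,y,\eta)$ with $\eta\neq 0$, producing the fourth alternative. The cutoffs $\chi$ keep base points in $\supp\chi$, and the hypothesis that $\varphi^t(\supp\chi)\subset\mathcal{U}$ for $|t|\leq t_0$ ensures the wavefront bound for $B$ restricted to $S^*(\mathcal{U}\times\mathcal{U})$ is enough.

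For the diagonal disjointness $\WF_h'\cap\Delta(S^*\mathcal{M})=\varnothing$, I rule out each of the four alternatives at a diagonal point $(x,\xi,x,\xi)$ with $\xi\neq 0$. \emph{Case $\Delta$}: we would need $e^{-t_0H_p}(x,\xi)=(x,\xi)$, i.e.\ $\varphi^{t_0}(x)=x$, which is excluded by $t_0<\inf T_\gamma$ together with the nonvanishing of $X$. \emph{Case $\Omega_+$}: we would need $e^{-t_0H_p}(x,\xi)=e^{sH_p}(x,\xi)$ for some $s\geq 0$ with $p(x,\xi)=0$, so $(x,\xi)$ is exactly periodic in $T^*\mathcal{U}$ with period $t_0+s>0$; base periodicity forces $x\in K$, and $p(x,\xi)=0$ forces $\xi\in E^*_s(x)\oplus E^*_u(x)$; but by hyperbolicity $(d\varphi^{t_0+s}(x))^{-T}$ is uniformly expanding on $E^*_s$ and uniformly contracting on $E^*_u$, so it has no nonzero fixed vector there, a contradiction. \emph{Case $E^*_+\times E^*_-$}: the second factor forces $(x,\xi)\in E^*_-$, and since $E^*_+$ is flow-invariant the first factor forces $(x,\xi)\in E^*_+$; hence the base point lies in $\Gamma_+\cap\Gamma_-=K$, and there $E^*_+=E^*_u$ meets $E^*_-=E^*_s$ only at $0$, contradicting $\xi\neq 0$. \emph{Case $\xi=0,\eta\neq 0$}: on the diagonal $\xi=\eta$, so $\xi=0$ forces $\eta=0$, contradicting $\eta\neq 0$.

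The main obstacle will be the $\Omega_+$ case: the symbol constraint $p(x,\xi)=0$ must be combined carefully with the hyperbolic structure of the dual linearized Poincar\'e map to conclude that the only projective periodic covectors on $\{p=0\}$ lie in $E^*_s$ or $E^*_u$ and are therefore expanded or contracted rather than fixed. The other cases reduce to routine bookkeeping once the wavefront composition formula and the flow-invariance of $E^*_\pm$ are in hand.
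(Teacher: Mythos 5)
Your proposal takes a genuinely different route from the paper's. The paper does \emph{not} compute $\WF_h'(e^{-it_0h^{-1}\tilde{P}_h(z)})$ directly; instead it uses the Duhamel formula
\[
e^{-t_0X}- e^{-it_0h^{-1}\tilde{P}_h(0)}=h^{-1}\int_0^{t_0}e^{-(t_0-t)X}(Q_\infty+q_1+W)e^{-ith^{-1}\tilde{P}_h(0)}\,dt,
\]
composes the pure pullback $e^{-t_0X}$ with $\tilde{R}_h(z)$ (a trivial composition rule, since a diffeomorphism pullback just transports wavefront sets by $e^{t_0H_p}$) to get the first three alternatives, and then treats the remainder term by term: the $W$-piece is microlocalized by $W\in\Psi^{\comp}_h$ to $\{|\xi|<\delta\}$ in the first variable and hence lands in $\kappa\{(x,0,y,\eta):\eta\neq 0\}$, while the $Q_\infty+q_1$-piece vanishes once you conjugate by $\chi$ because $\supp\chi\subset\Ucal$, $\varphi^{[-t_0,t_0]}(\supp\chi)\subset\Ucal$, and $\indic_{\Ucal}(Q_\infty+q_1)=0$. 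What this buys is that every step invokes only elementary facts (support conditions, microsupport of $W$, transport by a diffeomorphism), avoiding any general composition theorem for $h$-tempered operators.

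Your approach instead claims a bound on $\WF_h'(A)$ at fiber infinity and then invokes a H\"ormander-type composition rule for the operator product $AB$. Two cautionary points. First, the justification ``the non-self-adjoint terms are lower-order perturbations'' is imprecise: $Q_\infty\in\Psi^1_h$ is \emph{not} lower order. What saves the inclusion $\WF_h'(A)\cap S^*(\Mcal\times\Mcal)\subset \kappa(\mathrm{graph}\,e^{t_0H_p})$ is that $Q_\infty,q_1,W$ are absorbing (imaginary, nonnegative symbol) and $W$ has compact microsupport, so they can only remove, not add, singularities at fiber infinity; this is correct but needs to be said, and ruling out $(x,0,y,\eta)$ with $\eta\neq 0$ requires noting the Duhamel $W$-correction is $\mathcal{O}(h^\infty)$ on data microlocalized at fiber infinity. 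Second, the semiclassical composition rule for $\WF_h'$ of products of general $h$-tempered operators with the two degeneracy terms $\WF_X'(A)\times 0$ and $0\times\WF_Z'(B)$ is not a blanket theorem one can cite; the paper's Duhamel decomposition is precisely how one makes this composition argument rigorous in this setting without having to prove a composition calculus. Your identification of the fourth alternative with the ``horizontal'' piece $(\Gamma_+\times\{0\})\times E^*_-$ of \eqref{e:mr-wf}, via the $0_X\times\WF_Z'(B)$ degeneracy term, does match what the paper's $W$-term captures, so the two accounts are consistent.

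Your case-by-case check of the diagonal disjointness is correct and is a useful addition: the paper states ``In particular $\ldots\cap\Delta(S^*\Mcal)=\varnothing$'' without spelling out the four cases, and your treatment of the $\Omega_+$ case (periodicity forces $x\in K$, then $p=0$ forces $\xi\in E^*_s\oplus E^*_u$, and the iterated contraction/expansion of $(d\varphi^T)^{-T}$ on these summands rules out a nonzero fixed covector) is exactly the right argument.
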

\begin{proof}
By \eqref{e:mr-wf}, we have
\begin{align*}
     {\rm WF}_h'(e^{-t_0X}\tilde{R}_h(z))\cap S^*(\mathcal{U}\times\mathcal{U})&\subset \\
\kappa\big(\{(x,\xi,y,\eta)\,:\,&(e^{-t_0H_p}(x,\xi),y,\eta)\in \Delta(T^*\mathcal{U})\cup \Omega_+\cup E^*_+\times E^*_- \}\setminus\{0\}\big).
\end{align*}
Here the propagators $e^{-t_0X}$ and $e^{-it_0h^{-1}\tilde{P}_h(0)}$ are related by
\begin{equation}
\label{e:propagator-relation}
    e^{-t_0X}- e^{-it_0h^{-1}\tilde{P}_h(0)}=h^{-1}\int_0^{t_0}e^{-(t_0-t)X}(Q_\infty+q_1+W)e^{-ith^{-1}\tilde{P}_h(0)}dt.
\end{equation}
The term with $W$ can be estimated by
\begin{align*}
    \WF_h'(e^{-(t_0-t)X}We^{-ith^{-1}\tilde{P}_h(0)}\tilde{R}_h(z))\cap S^*(\Ucal\times\Ucal)\subset \kappa\big(\{(x,0,y,\eta):x,y\in\mathcal{U},\eta\neq0\}\big)
\end{align*}
since $W$ is microlocally compactly supported.

Moreover, by the assumptions on $t_0$, for any $t\in[0,t_0]$,
$$\chi e^{-(t_0-t)X}(1-\indic_{\Ucal})=0.$$
Now since $\indic_{\Ucal}(Q_\infty+q_1)=0$, we have the corresponding term vanishes:
\begin{equation}
\label{e:propagation-vanish}
\chi e^{-(t_0-t)X}(Q_\infty+q_1)e^{-ith^{-1}\tilde{P}_h}\tilde{R}_h(z)\chi=0.
\end{equation}
This concludes the proof.
\end{proof}

Now to finish the proof of Proposition \ref{trbound1}, we use  \eqref{e:modified-resolvent-est} to deduce the $L^2$ bound: For any $A,B\in\Psi^{\comp}_h(\mathcal{M})$, we need to show
\begin{equation}
\label{e:mr-l2}
\|A\chi e^{-it_0h^{-1}\tilde{P}_h(z)} \tilde{R}_h(z)\chi B\|_{L^2\to L^2}=\mathcal{O}(h^{-2}).
\end{equation}
Again, by an analogue of \eqref{e:propagator-relation} 
\begin{align*}
A\chi e^{-it_0h^{-1}\tilde{P}_h(0)} \tilde{R}_h(z)\chi B=
&\;A\chi e^{-t_0X}\tilde{R}_h(z)\chi B\\
&\;\;-h^{-1}\int_0^{t_0}A\chi e^{-ith^{-1}\tilde{P}_h(0)}(W+Q_\infty+q_1) e^{-(t_0-t)X}\tilde{R}_h(z)\chi Bdt.
\end{align*}
Here the terms with $Q_\infty$ and $q_1$ vanish:
\begin{itemize}
\item By \eqref{e:resolvent-supp} and $Q_\infty\indic_{\overline{\Sigma}_+}=0$, we see that
$$Q_\infty e^{-(t_0-t)X}\tilde{R}_h(z)\chi=0.$$
\item We claim
\begin{equation}\label{e:vanish_q1}
    \chi e^{-ith^{-1}\tilde{P}_h(0)}q_1 e^{-(t_0-t)X}\tilde{R}_h(z)\chi 
=\chi e^{-tX}q_1e^{-(t_0-t)X}\tilde{R}_h(z)\chi=0.
\end{equation}
Since it is holomorphic in $z$, it suffices to show \eqref{e:vanish_q1} for $\Im z>C_0h$ for some sufficiently large $C_0>0$. In this case, one can write
\begin{align*}
    \chi e^{-ith^{-1}\tilde{P}_h(0)}q_1 e^{-(t_0-t)X}\tilde{R}_h(z)\chi=ih^{-1}\int_0^\infty \chi e^{-ith^{-1}\tilde{P}_h(0)}q_1 e^{-(t_0-t)X}e^{-ish^{-1}\tilde{P}_h(z)}\chi ds.
\end{align*}
Thus it suffices to show for any $f\in C_c^\infty(\Mcal)$,
\begin{equation}\label{e:vanish_q1_2}
    \chi e^{-ith^{-1}\tilde{P}_h(0)}q_1 e^{-(t_0-t)X}e^{-ish^{-1}\tilde{P}_h(z)}\chi f=\chi e^{-tX}q_1 e^{-(t_0-t)X}e^{-ish^{-1}\tilde{P}_h(z)}\chi f=0.
\end{equation}
By Lemma \ref{l:support1} (a)(b), $\supp  e^{-(t_0-t)X}e^{-ish^{-1}\tilde{P}_h(z)}\chi f \subset \Sigma_+$, thus \eqref{e:vanish_q1_2} follows from Lemma \ref{l:support1}\;(b) and Lemma \ref{l:support2}\;(a)(b).
\end{itemize}
For the other terms, by \eqref{e:modified-resolvent-est}, we conclude that for any $\tilde{A},\tilde{B}\in\Psi^{\comp}_h(\mathcal{M})$,
\begin{equation}
\label{e:mr-cutoff-est}
\|\tilde{A}\tilde{R}_h(z)\tilde{B}\|_{L^2\to L^2}=\mathcal{O}(h^{-1}),
\end{equation} 
since on compact subset of phase space, the $\mathcal{H}_h^s$ norm is equivalent to the $L^2$ norm. Applying \eqref{e:mr-cutoff-est} to $\tilde{A}=e^{t_0X}A\chi e^{-t_0X}$ and $\tilde{B}=\chi B$, we get the estimate for the first term:
$$\|A\chi e^{-t_0X}\tilde{R}_h(z)\chi B\|_{L^2\to L^2}=\mathcal{O}(h^{-1}).$$
Applying \eqref{e:mr-cutoff-est} to $\tilde{A}=e^{(t_0-t)X}We^{-(t_0-t)X}$ and $\tilde{B}=\chi B$ we get the estimate 
$$\|A\chi e^{-ith^{-1}\tilde{P}_h(0)}We^{-(t_0-t)X}\tilde{R}_h(z)\chi B\|_{L^2\to L^2}=\mathcal{O}(h^{-1}).$$
Combining the above, we get \eqref{e:mr-l2} and thus finish the proof of Proposition \ref{trbound1}.

\subsection{Proof of the local trace formula}
Now we give the proof of Theorem \ref{t:localtrace} which is essentially the same as \cite[Section 4]{local}. Here we only emphasize the difference and refer to \cite[Section 4]{local} for more details. First as in \cite[Section 4]{local}, we decompose
\begin{align*}
    \Omega=\{\lambda\in\CC:-A\leq \Im \lambda\leq B\}
\end{align*}
into $\Omega=\bigcup\limits_{k\in\ZZ}\Omega_k$ where
\begin{align*}
    \Omega_0=\Omega\cap\{-1\leq\Re\lambda\leq 1 \},\quad \Omega_{\pm k}=\Omega\cap\{k\leq \pm\Re\lambda\leq k+1\},k>0.
\end{align*}
As in \cite[Section 4.2]{local}, let $\gamma_k=\partial\Omega_k$ be the contours with counterclockwise orientation; $\gamma^i_k,i=1,2,3,4$ be different parts of the contour; and $\tilde{\gamma}_k^i$ be modifications of $\gamma_k^i$ in the neighbourhood of distance $1$ to avoid the resonances (to be chosen later).

Let $\chi\in C_c^\infty(\Ucal)$ be a cutoff function which equals to 1 near the trapped set $K$. We recall the Atiyah-Bott-Guillemin trace formula (\cite[(4.6)]{open}), as a distribution on $(0,\infty)$,
\begin{align*}
    \Tr^\flat \chi e^{-itP}\chi=\sum\limits_{\gamma}\frac{T_\gamma^\#\delta(t-T_\gamma)}{|\det(I-\mathcal{P}_\gamma)|},\quad t>0,
\end{align*}
If we define
\begin{align*}
    \zeta_1(\lambda)=\exp\left(-\sum\limits_\gamma\frac{T_\gamma^\# e^{i\lambda T_\gamma}}{T_\gamma|\det(I-\mathcal{P}_\gamma)|}\right), \quad\Im\lambda\gg1
\end{align*}
then for $t_0>0$ small enough as in Proposition \ref{trbound1} we have $\supp \Tr^\flat(\chi e^{-itP}\chi) \subset(t_0,\infty)$ and thus for $\Im\lambda\gg 1$,
$$\frac{d}{d\lambda}\log\zeta_1(\lambda)=\frac{1}{i}\sum\limits_{\gamma}\frac{T_\gamma^\# e^{i\lambda T_\gamma}}{|\det(I-\mathcal{P}_\gamma)|}=\frac{1}{i}\int_0^\infty e^{i\lambda t} \Tr^\flat(\chi e^{-itP}\chi) dt=\frac{1}{i}\Tr^\flat \int_{t_0}^\infty\chi e^{-it(P-\lambda)}\chi dt$$
which means
\begin{equation}
\label{e:dlogzeta1}
\frac{d}{d\lambda}\log\zeta_1(\lambda)=-\Tr^\flat (\chi e^{-it_0(P-\lambda)}(P-\lambda)^{-1}\chi).
\end{equation}
This allows us to meromorphically continue $\zeta_1(\lambda)$ to the whole complex plane $\mathbb{C}$ since $e^{-it_0(P-\lambda)}(P-\lambda)^{-1}$ satisfies a suitable wavefront set condition (see \cite[\S 4.1]{open}).

Now the following estimate allows us to choose suitable contours $\tilde{\gamma}_k^i$ to avoid resonances:
\begin{lem}
Fix $0<\eta<1,\epsilon>0$, then we can choose the contours $\tilde{\gamma}_k^i$ in an $\eta$-neighborhood of  $\gamma_k^i$ such that 
\begin{align*}
    \left|\frac{d}{d\lambda}\log\zeta_1(\lambda)\right|=\Ocal(\langle \lambda\rangle^{2n+1+\epsilon}),\quad \lambda\in\tilde{\gamma}_k^i.
\end{align*}
\end{lem}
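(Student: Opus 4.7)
The plan is to combine the trace formula identity \eqref{e:dlogzeta1} with the flat trace bound of Proposition \ref{trbound1} and a Cartan-type minimum modulus argument. The heart of the argument is to separate the smooth background from the singular (resonance) contribution via a finite rank perturbation, as in the upper bound proof of Section \ref{s:upper}.

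First, using the complex absorbing operator $W$ from Section \ref{s:upper}, the resolvent identity yields $(P-\lambda)^{-1} = F(\lambda)^{-1}(P-\lambda-iMhW)^{-1}$ with $F(\lambda) = I + i(P-\lambda-iMhW)^{-1}MhW$, a finite rank perturbation of the identity with $\rank(I-F(\lambda)) = \Ocal(h^{-n})$. Its Fredholm determinant $D(\lambda) = \det F(\lambda)$ is holomorphic in $\lambda$ and vanishes (with multiplicity) exactly at the Pollicott--Ruelle resonances. The argument in Section \ref{s:upper} gives $\log|D(\lambda)| \leq C\langle\lambda\rangle^{n+1}$ on a bounded-width strip, with $\log|D(\lambda)| \geq -C$ on a horizontal line $\{\Im\lambda = B_0\}$ for $B_0$ sufficiently large.

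Substituting the identity into \eqref{e:dlogzeta1} splits $-(d/d\lambda)\log\zeta_1(\lambda)$ into a ``regular'' flat trace piece, involving only the modified resolvent, and a finite rank piece proportional to $(d/d\lambda)\log D(\lambda)$. The regular piece is controlled by Proposition \ref{trbound1}, applied with the semiclassical rescaling $h \sim \langle\lambda\rangle^{-1/(1-\epsilon/2)}$ so that $h\lambda$ lies in the admissible window $[-h^\epsilon, h^\epsilon] + i[-C_1 h, 1]$; this produces a bound $\Ocal(\langle\lambda\rangle^{2n+1+\epsilon/2})$ uniformly in the strip $\{-A\leq\Im\lambda\leq B\}$.

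For the singular piece, a Cartan-type minimum modulus theorem converts the upper bound $\log|D| \leq C\langle\lambda\rangle^{n+1}$ (together with the lower bound at height $B_0$) into $\log|D(\lambda)| \geq -C\langle\lambda\rangle^{n+1+\epsilon/4}$ outside a union of exceptional disks whose total diameter per unit horizontal strip is at most $\eta$. Since Theorem \ref{t:upper} bounds the number of zeros per unit strip by $\Ocal(\langle\lambda\rangle^n)$, we can select $\tilde\gamma_k^i$ in the $\eta$-neighborhood of $\gamma_k^i$ avoiding every such disk; Cauchy's integral formula on a disk of radius $\sim\eta\langle\lambda\rangle^{-n}$ around each $\lambda\in\tilde\gamma_k^i$ then gives
\[
\left|\frac{d}{d\lambda}\log D(\lambda)\right|\leq C\langle\lambda\rangle^{2n+1+\epsilon/2}.
\]
Summing both contributions completes the bound $\Ocal(\langle\lambda\rangle^{2n+1+\epsilon})$. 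The main obstacle is the careful balancing in the Cartan step: the Cauchy radius must be small enough to fit between neighboring resonances (density $\Ocal(\langle\lambda\rangle^n)$) yet large enough that the polynomial loss on the derivative stays within $\epsilon$ of the exponent on $\log|D|$. This mirrors the argument of \cite[Section 4]{local}, and adapts to the open hyperbolic setting once the flat trace bound of Proposition \ref{trbound1} is in place.
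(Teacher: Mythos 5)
Your overall architecture is right --- combine the flat trace bound of Proposition~\ref{trbound1}, a finite-rank determinant, Cartan's lemma, and Cauchy's formula --- and you correctly identify that the $\epsilon$-loss comes from the semiclassical rescaling. But there is a genuine gap in the decomposition step: the multiplicative identity $(P-\lambda)^{-1}=F(\lambda)^{-1}\tilde R(\lambda)$, after writing $F^{-1}=I+(F^{-1}-I)$, does \emph{not} split $-\frac{d}{d\lambda}\log\zeta_1$ into ``(Proposition~\ref{trbound1} quantity) $+\frac{d}{d\lambda}\log D(\lambda)$''. What you actually get for the finite-rank piece is the flat trace of $\chi\, e^{-it_0(P-\lambda)}(F^{-1}-I)\tilde R\,\chi=\chi\, e^{-it_0(P-\lambda)}(R-\tilde R)\chi$, whose trace is \emph{not} $\operatorname{tr}(R-\tilde R)=-\frac{d}{d\lambda}\log\det F$ because the propagator $e^{-it_0(P-\lambda)}$ and the cutoffs $\chi$ are in the way and cannot be cycled out. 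Trying to bound this term by operator norms instead is hopeless, since $\|F^{-1}\|$ can be exponentially large near resonances; the whole point of passing to $\log\det F$ is to avoid that.

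The paper uses instead the \emph{additive} identity
\[
\chi e^{-it_0h^{-1}P_h(z)}R_h(z)\chi
=\chi e^{-it_0h^{-1}\tilde P_h(z)}\tilde R_h(z)\chi
+\chi(R_h(z)-\tilde R_h(z))\chi
-\frac{i}{h}\int_0^{t_0}\chi\bigl(e^{-ish^{-1}P_h(z)}-e^{-ish^{-1}\tilde P_h(z)}\bigr)\chi\,ds,
\]
which produces \emph{three} pieces: the modified propagator-resolvent (handled by Proposition~\ref{trbound1}), the pure resolvent difference with no propagator in front, and the propagator-difference correction, which is finite rank of rank $\Ocal(h^{-n})$ and bounded directly by $\Ocal(h^{-n-1})$. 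Your proposal has no analogue of this third term, and it is exactly this term that removes the propagator and leaves the bare $R_h-\tilde R_h$. Moreover, even for that second piece, passing from $\operatorname{tr}^\flat(\chi(R_h^0-\tilde R_h)\chi)$ to $\operatorname{tr}(R_h^0-\tilde R_h)=-\frac{d}{dz}\log\det F(z)$ requires a non-trivial cutoff-removal argument (via support properties of the Schwartz kernel of $R_h^0-\tilde R_h$ and Lemma~\ref{l:convexity}), which your write-up omits. Two minor points: you are using the Section~\ref{s:upper} absorbing potential $MhW$ (supported near $E_0^*\cap p^{-1}(1)$) where the paper's lemma uses the Section~\ref{s:lower} potential $W\in\Psi_h^{\comp}$ supported near $\{|\xi|<\delta\}$; and Cartan's lemma does not cost a factor $\langle\lambda\rangle^{\epsilon/4}$ --- it only loses a multiplicative constant $\log(15e^3/\eta)$, so the $\epsilon$ in the final exponent comes entirely from the $h\leftrightarrow\lambda$ rescaling, not from the minimum-modulus step.
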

\begin{proof}
Let $z\in [-h^\epsilon,h^\epsilon]+i[-C_1h,1]$, $\zeta_h(z)=\zeta_1(z/h)$, $\lambda=z/h$, it then suffices to prove that we can choose contours $\tilde{\gamma}_k^i$ such that when $z\in h\tilde{\gamma}_k^i$,
\begin{align}
\label{dzetah}
    \left|\frac{d}{dz}\log\zeta_h(z)\right|=\Ocal(h^{-2n-2}).
\end{align}
Here by \eqref{e:dlogzeta1},
$$\frac{d}{dz}\log\zeta_h(z)
=-\Tr^\flat(\chi e^{-it_0h^{-1}P_h(z)}R_h(z)\chi)$$
where we can decompose
\begin{align*}
    \chi e^{-it_0h^{-1}P_h(z)}R_h(z)\chi=& \chi e^{-it_0h^{-1}\tilde{P}_h(z)}\tilde{R}_h(z)\chi+\chi(R_h(z)-\tilde{R}_h(z))\chi\\
    &-\frac{i}{h}\int_0^{t_0}\chi(e^{-ish^{-1}P_h(z)}-e^{-ish^{-1}\tilde{P}_h(z)})\chi ds.
\end{align*}
For the first term, we use Proposition \ref{trbound1} to get
\begin{align*}
    \Tr^\flat(\chi e^{-it_0h^{-1}\tilde{P}_h(z)}\tilde{R}_h(z)\chi)=\Ocal(h^{-2n-2}).
\end{align*}
The integrand in the last term is a finite rank operator:
$$\chi(e^{-ith^{-1}P_h(z)}-e^{-ith^{-1}\tilde{P}_h(z)})\chi
=h^{-1}\int_0^t \chi e^{-i(t-s)h^{-1}P_h(z)}We^{-ish^{-1}\tilde{P}_h(z)} \chi   ds$$
where by assumption $\rank(W)=\mathcal{O}(h^{-n})$, so 
\begin{align*}
    &\;\Tr^\flat\chi(e^{-ith^{-1}P_h(z)}-e^{-ith^{-1}\tilde{P}_h(z)})\chi\\
    =&\;h^{-1}\Tr\int_0^t \chi e^{-i(t-s)h^{-1}P_h(z)}We^{-ish^{-1}\tilde{P}_h(z)} \chi   ds=\Ocal(h^{-n-1}).
\end{align*}
It remains to estimate $\Tr^\flat\chi(R_h(z)-\tilde{R}_h(z))\chi$ away from the resonances. Let 
$$R_h^0(z)=(-ihX-i(Q_\infty+q_1)-z)^{-1}$$
by \cite[Lemma 3.4]{open}
$$\chi R_h(z)\chi =\chi R_h^0(z)\chi.$$
Note $R_h^0(z)-\tilde{R}_h(z)=-i R_h^0(z)W\tilde{R}_h(z)$ is a finite rank operator. We claim for appropriate $\chi\in C_c^\infty(\Ucal)$,
\begin{equation}\label{e:trace_nochi}
    \Tr(\chi(R_h^0(z)-\tilde{R}_h(z))\chi)=\Tr(R_h^0(z)-\tilde{R}_h(z)).
\end{equation}
Since both sides are meromorphic in $z$, it suffices to check for $\Im z>C_0 h$ for some sufficiently large $C_0>0$. We claim the integration kernel of $R_h^0(z)-\tilde{R}_h(z)$ is supported in
\begin{align}\label{e:supp_difference}
    \overline{\bigcup\limits_{t\geq 0}\varphi^t(\supp W)\times \bigcup\limits_{t\leq 0}\varphi^t(\supp W)}.
\end{align}
We write 
\begin{align*}
    &R_h^0(z)-\tilde{R}_h(z)=-i \tilde{R}_h(z)WR_h^0(z)\\
    &=ih^2\left(\int_0^\infty e^{-ish^{-1}(-ihX -i(Q_\infty+q_1+W)-z)}ds\right) W \left(\int_0^\infty e^{-ith^{-1}(-ihX -i(Q_\infty+q_1)-z)}dt\right).
\end{align*}
It suffices to show the integration kernel of $ e^{-ish^{-1}(-ihX -i(Q_\infty+q_1+W))} W e^{-ith^{-1}(-ihX -i(Q_\infty+q_1))} $ is supported in \eqref{e:supp_difference} for $s,t\geq 0$, which follows from the following facts: for any $g\in C^\infty(\Mcal)$,
\begin{itemize}
    \item 
    By Lemma \ref{l:support1} (b), $e^{-ish^{-1}(-ihX -i(Q_\infty+q_1+W))} W g=e^{-ish^{-1}(-ihX -i(q_1+W))} Wg$. By Lemma \ref{l:support1} (a), 
    $$\supp e^{-ish^{-1}(-ihX -i(q_1+W))} Wg \subset \bigcup\limits_{s'=0}^s \varphi^{s'}(\supp W).$$
    Thus for any $f\in C^\infty(\Mcal)$ supported outside $\overline{\bigcup\limits_{t\geq 0}\varphi^t(\supp W)}$, $s\geq 0$ we have $$ f e^{-ish^{-1}(-ihX -i(Q_\infty+q_1+W))}W g  =0.$$
    \item By a dual argument, for any $f\in C^\infty(\Mcal)$ supported outside $\overline{\bigcup\limits_{t\leq 0}\varphi^t(\supp W)}$, $t\geq 0$ we have $$ g W e^{-ith^{-1}(-ihX -i(Q_\infty+q_1))} f =0.$$
    One can also see this directly, as we can solve directly for $x\in\Sigma$,
    \begin{align*}
        w(t,x):=e^{-h^{-1}\int_0^t q_1(\varphi^{-s}(x))ds}f(\varphi^{-t}(x))=e^{-ith^{-1}(-ihX -i(Q_\infty+q_1))} f(x).
    \end{align*}
\end{itemize}

Now Lemma \ref{l:convexity}, the intersection of \eqref{e:supp_difference} with diagonal $\Delta=\{(x,x):x\in\Mcal\}$ 
\begin{align*}
    \overline{\bigcup\limits_{t\geq 0}\varphi^t(\supp W)\times \bigcup\limits_{t\leq 0}\varphi^t(\supp W)}\cap \Delta = \overline{\bigcup\limits_{t\geq 0}\varphi^t(\supp W)}\cap \overline{\bigcup\limits_{t\leq 0}\varphi^t(\supp W)}.
\end{align*}
is a compact set inside $\Ucal$. If we take $\chi=1$ near $\overline{\bigcup\limits_{t\geq 0}\varphi^t(\supp W)\times \bigcup\limits_{t\leq 0}\varphi^t(\supp W)}\cap \Delta$, then we have \eqref{e:trace_nochi}.

Then we need to estimate $\Tr (R_h^0(z)-\tilde{R}_h(z))$. Similar to \cite[\S 4.1]{local}, we write
\begin{align*}
    \Tr (R_h^0(z)-\tilde{R}_h(z))=-\frac{d}{dz}\log\det F(z)
\end{align*}
where 
$$F(z)=I+iW\tilde{R}_h(z),\quad F(z)^{-1}=I-iWR_h^0(z).$$
We have
\begin{align*}
    |\det F(z)|\leq (Ch^{-1})^{Ch^{-n}}\leq Ce^{Ch^{-n-1}}
\end{align*}
for $z\in[-h^\epsilon,h^\epsilon]+i[-C_1h,1]$ and
\begin{align*}
    |\det F(z)^{-1}|\leq  (Ch^{-1})^{Ch^{-n}}\leq Ce^{Ch^{-n-1}}
\end{align*}
for $z\in[-h^\epsilon,h^\epsilon]+i[C_2h,1]$ when $C_2>0$ is chosen to be large so that $R_h^0(z)$ exists and is bounded by $Ch^{-1}$.
We recall a lemma of Cartan (see \cite[Section 11.3 Theorem 4]{entire}).
\begin{lem}
Suppose $R>0$ and $g(z)$ is a holomorphic function in $D(z_0,2eR)$ with $g(z_0)=1$, then for any $\eta>0$ there exists a union of discs $\Dcal$ with sum of radii less that $\eta R$, such that
\begin{align*}
    \log |g(z)|\geq -\log (15e^3/\eta)\log\max\limits_{z\in D(z_0,2eR)}|g(z)|,\quad z\in D(z_0,R)\setminus \Dcal.
\end{align*}
Moreover, if $g(z)$ has $M$ zeros inside $D(z_0,2R)$, we may take $\Dcal$ to be the union of $M$ discs centered at the zeros.
\end{lem}
It follows that if we can avoid the union of discs, we have
\begin{align*}
    |\log\det F(z)|=\Ocal( h^{-n-1}).
\end{align*}
Since there are $\Ocal(h^{-n})$ zeros in the region $[h^\epsilon-h,h^\epsilon+h]+i[-C_1h,1]$, there are gaps of size $\sim h^{n+1}$. By Cauchy's formula we get
\begin{align*}
    \left|\frac{d}{dz}\log \det F(z)\right|=\Ocal(h^{-2n-2}).
\end{align*}
We conclude that we may choose contours appropriately such that along the contour, \eqref{dzetah} holds. 
\end{proof}

\begin{lem}
Let $F_A$ be as in Theorem \ref{t:localtrace}, then for $\psi\in C_c^\infty((0,\infty))$
\begin{align}
\label{distri0}
    \langle F_A,\psi\rangle=\frac{i}{2\pi}\sum\limits_{k\in\ZZ}\int_{\tilde{\gamma}_k^3} \hat{\psi}(\lambda)\frac{d}{d\lambda}\log\zeta_1(\lambda)d\lambda-\sum\limits_{\mu_j\in\Res(P)\cap\tilde{\Omega},\Im\mu_j\leq -A}\hat{\psi}(\mu_j).
\end{align}
\end{lem}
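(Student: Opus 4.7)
The plan is to unpack the local trace formula, convert the closed-orbit sum into a contour integral via Fourier inversion together with \eqref{e:dlogzeta1}, and then shift the contour downward from $\Im\lambda=B\gg1$ to $\bigcup_k\tilde\gamma_k^3$, collecting residues of $\frac{d}{d\lambda}\log\zeta_1$ at the Pollicott--Ruelle resonances in the strip. To begin, I would pair the local trace formula \eqref{e:localtrace} with $\psi\in C_c^\infty((0,\infty))$ to get
$$\langle F_A,\psi\rangle=\sum_\gamma\frac{T_\gamma^\#\psi(T_\gamma)}{|\det(I-\mathcal{P}_\gamma)|}-\sum_{\mu\in\Res(P),\,\Im\mu>-A}\hat\psi(\mu),$$
using $\int e^{-i\mu t}\psi(t)\,dt=\hat\psi(\mu)$ for the Fourier--Laplace convention in play.

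Next, I would apply Fourier inversion to write $\psi(T_\gamma)=\frac{1}{2\pi}\int\hat\psi(\lambda)e^{i\lambda T_\gamma}\,d\lambda$, shift this contour up to $\Im\lambda=B$ for $B$ sufficiently large, and then interchange the integral with the sum over $\gamma$. The shift is legitimate because $\hat\psi$ is entire and Schwartz in $\Re\lambda$ on any horizontal strip (Paley--Wiener), and the interchange is justified because on $\Im\lambda=B$ the series $\sum_\gamma\frac{T_\gamma^\#e^{i\lambda T_\gamma}}{|\det(I-\mathcal{P}_\gamma)|}=i\,\frac{d}{d\lambda}\log\zeta_1(\lambda)$ converges absolutely. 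This yields
$$\sum_\gamma\frac{T_\gamma^\#\psi(T_\gamma)}{|\det(I-\mathcal{P}_\gamma)|}=\frac{i}{2\pi}\int_{\Im\lambda=B}\hat\psi(\lambda)\frac{d}{d\lambda}\log\zeta_1(\lambda)\,d\lambda.$$

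The heart of the argument is deforming the contour from $\Im\lambda=B$ down to $\bigcup_k\tilde\gamma_k^3$. I would handle this rectangle-by-rectangle: on each $\tilde\Omega_k$, the residue theorem gives
$$\oint_{\tilde\gamma_k}\hat\psi(\lambda)\frac{d}{d\lambda}\log\zeta_1(\lambda)\,d\lambda=2\pi i\sum_{\mu_j\in\tilde\Omega_k}m_{\mu_j}\hat\psi(\mu_j),$$
since $\frac{d}{d\lambda}\log\zeta_1$ has a simple pole with residue equal to the multiplicity $m_{\mu_j}$ at each resonance (a consequence of \eqref{e:dlogzeta1} and the finite-rank pole structure of $(P-\lambda)^{-1}$ at $\mu_j$). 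Summing over $k$, the vertical sides $\tilde\gamma_k^2$ and $\tilde\gamma_{k+1}^4$ cancel pairwise by construction, the top pieces reassemble into $\int_{\Im\lambda=B}$, and the bottom pieces become $\sum_k\int_{\tilde\gamma_k^3}$. Provided $B$ exceeds the supremum of imaginary parts of all resonances, $\tilde\Omega$ contains every $\mu\in\Res(P)$ with $\Im\mu>-A$, so the residue contributions from those resonances cancel the sum $\sum_{\mu,\Im\mu>-A}\hat\psi(\mu)$ appearing in the first step, leaving only the additional resonances $\mu_j\in\tilde\Omega$ with $\Im\mu_j\leq -A$ (picked up because $\tilde\gamma_k^3$ dips slightly below $\Im\lambda=-A$ to avoid resonances on that line). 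This yields \eqref{distri0}.

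The main technical obstacle is justifying the infinite sum over $k\in\ZZ$ and the absolute convergence of all intermediate integrals. This is precisely what the preceding lemma supplies: the polynomial bound $\left|\frac{d}{d\lambda}\log\zeta_1(\lambda)\right|=\mathcal{O}(\langle\lambda\rangle^{2n+1+\epsilon})$ along the contours $\tilde\gamma_k^i$, combined with the rapid decay of $\hat\psi$ in $\Re\lambda$ on horizontal strips, makes $\sum_k\int_{\tilde\gamma_k^3}$ absolutely convergent and ensures the vanishing of side-contour contributions as $|k|\to\infty$. Together with the resonance counting bound of Theorem \ref{t:upper}, which keeps the sum over $\mu_j\in\tilde\Omega\cap\Res(P)$ locally finite, these estimates promote the formal manipulations above to a rigorous derivation of \eqref{distri0}.
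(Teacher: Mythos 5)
Your proof is correct and takes essentially the same approach as the paper: the paper's proof is extremely terse, merely recording the two ingredient identities (the conversion of the closed-orbit sum to the contour integral over $\Im\lambda = B$ via Fourier inversion and \eqref{e:dlogzeta1}, and the pairing of the resonance sum with $\hat\psi$) and stating that the lemma "follows." You spell out the implied contour deformation from $\Im\lambda = B$ down to $\bigcup_k\tilde\gamma_k^3$, the residue calculus at the resonances, the pairwise cancellation of vertical edges, and the role of the polynomial bound on $\frac{d}{d\lambda}\log\zeta_1$ in making the infinite sum over $k$ converge — all of which is exactly the content the paper leaves implicit.
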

\begin{proof}
This follows from
\begin{align*}
    \int_{\mathbb{R}+iB}\hat{\psi}(\lambda)\frac{d}{d\lambda}\log\zeta_1(\lambda)d\lambda=-2\pi i\left\langle \sum\limits_\gamma\frac{T_\gamma^\# \delta(t-T_\gamma)}{|\det(I-\mathcal{P}_\gamma)|},\psi\right\rangle
\end{align*}
and
\begin{align*}
    \sum\limits_{\mu\in\Res(P),\Im \mu>-A}\hat{\psi}(\mu)=\left\langle  \sum\limits_{\mu\in\Res(P),\Im \mu>-A}e^{-i\mu t},\psi \right\rangle.
\end{align*}
\end{proof}
We claim that $F_A$ can be extended to $\Scal'(\RR)$ such that $\hat{F}_A$ is holomorphic in $\Im \lambda<A-\epsilon$ and 
\begin{align*}
    |\hat{F}_A(\lambda)|=\Ocal(\langle \lambda\rangle^{2n+1+\epsilon}),\quad \Im\lambda<A-\epsilon.
\end{align*}
To justify the claim we deal with the two terms in \eqref{distri0} separately. Let
\begin{align*}
    \langle u_k, \psi\rangle =\int_{\tilde{\gamma}_k^3}\hat{\psi}(\lambda)\frac{d}{d\lambda}\log\zeta_1(\lambda)d\lambda,
\end{align*}
we notice that $u_k$ admits a natural extension to $\RR$ by
\begin{align*}
    u_k(t)=\mathbbm{1}_{t>0}\int_{\tilde{\gamma}^3_k}e^{-it\lambda}\frac{d}{d\lambda}\log\zeta_1(\lambda)d\lambda
\end{align*}
and its Fourier transform
\begin{align*}
    \hat{u}_k(\mu)=\int_{\tilde{\gamma}^3_k}\frac{1}{i(\lambda+\mu)}\frac{d}{d\lambda}\log\zeta_1(\lambda)d\lambda
\end{align*}
extends holomorphically to $\{\Im \mu<A-\epsilon\}$. If we take the sum $\sum\limits_{k\in\ZZ}\hat{u}_k$ naively, then it does not converge. Instead we choose polynomial
\begin{align*}
    p_k(\lambda,\mu)=\sum\limits_{j=0}^{2n+1} (-1)^j\lambda^{-j-1}\mu^j
\end{align*}
such that
\begin{align*}
    (\lambda+\mu)^{-1}-p_k(\lambda,\mu)=\frac{\mu^{2n+2}}{\lambda^{2n+2}(\lambda+\mu)}
\end{align*}
and thus
\begin{align*}
    \sum\limits_{k\in\ZZ}\left(\hat{u}_k-p_k(\lambda,\mu)\int_{\tilde{\gamma}^3_k}\frac{1}{i}\frac{d}{d\lambda}\log\zeta_1(\lambda)d\lambda\right)
\end{align*}
converges absolutely and uniformly. Let $u\in \Scal'(\RR)$ be the Schwartz distribution such that 
\begin{align*}
    \hat{u}:=\sum\limits_{k\in\ZZ}\left(\hat{u}_k-p_k(\lambda,\mu)\int_{\tilde{\gamma}^3_k}\frac{1}{i}\frac{d}{d\lambda}\log\zeta_1(\lambda)d\lambda\right),
\end{align*}
then $\hat{u}(\mu)$ is a holomorphic function in $\{\Im\mu<A-\epsilon\}$. Moreover, since
\begin{align*}
    \partial^{2n+2}\hat{u}=\sum\limits_{k\in\ZZ}\partial^{2n+2}\hat{u}_k
\end{align*}
converges, we have
\begin{align}
\label{distri1}
    \langle t^{2n+2}u,\varphi\rangle=\sum\limits_{k\in\ZZ}\int_{\tilde{\gamma}^3_k}\widehat{t^{2n+2}_+\varphi}(\lambda)\frac{d}{d\lambda}\log\zeta_1(\lambda) d\lambda
\end{align}
for $\varphi\in C_c^\infty(\RR)$. In other words, $u$ is an extension of the distribution defined by the right hand side of \eqref{distri1} from $(0,\infty)$ to $\RR$ such that $u=0$ in $(-\infty,0)$.  

Since 
\begin{align*}
    \hat{u}_k-p_k(\lambda,\mu)\int_{\tilde{\gamma}^3_k}\frac{1}{i}\frac{d}{d\lambda}\log\zeta_1(\lambda)d\lambda&=\int_{\tilde{\gamma}^3_k}\frac{\mu^{2n+2}}{i\lambda^{2n+2}(\lambda+\mu)}\frac{d}{d\lambda}\log\zeta_1(\lambda)d\lambda\\
    &=\Ocal\left(\frac{\mu^{2n+2}}{\lambda^{1-\epsilon}(\lambda+\mu)}\right),
\end{align*}
we get
\begin{align*}
    \hat{u}=\sum\limits_{k\in\ZZ}\Ocal\left(\frac{\mu^{2n+2}}{(1+|k|)^{1-\epsilon}(1+|k|+|\mu|)}\right)=\Ocal(\langle \mu\rangle^{2n+1+\epsilon'})
\end{align*}
for $\{\Im\mu<A-\epsilon\}.$ The second term in \eqref{distri0} can be handled similarly, with a better bound.

\subsection{Proof of the lower bound}\label{s:weaklowerbound}
We prove Theorem \ref{t:lower} by improving the method in \cite[Section 5]{local}.

Let $$N_A(r)=\#(\Res(P)\cap\{|\mu|\leq r,\Im\mu>-A\}),$$ 
we would like to show $N_A(r)\geq r^\delta/C$.
Let $\varphi\in C_c^\infty(\RR)$ such that
\begin{align*}
    \varphi\geq 0,\quad \varphi(0)=1,\quad\supp\varphi\subset [-1,1]
\end{align*}
and $\varphi_{\ell,d}(x)=\varphi((x-d)/\ell)$ for $0<\ell<1<d$. We apply local trace formula and find
\begin{align}\label{e:trace-lower-1}
    \sum\limits_{\mu\in \Res(P),\Im\mu>-A}\hat{\varphi}_{\ell,d}(\mu)+\langle F_A,\varphi_{\ell,d}\rangle=\sum\limits_\gamma \frac{T_\gamma^\#\varphi_{\ell,d}(T_\gamma)}{|\det(I-\mathcal{P}_\gamma)|}.
\end{align}
Since $\supp \varphi \subset [-1,1]$, we have
\begin{align*}
    |\hat{\varphi}_{\ell,d}(\zeta)|=|\ell\hat{\varphi}(\ell\zeta)e^{-id\zeta}|\leq C\ell e^{(d-\ell)\Im\zeta}(1+|\ell\zeta|)^{-N}.
\end{align*}
For the first term in \eqref{e:trace-lower-1} we have
\begin{align*}
    \sum\limits_{\mu\in \Res(P),\Im\mu>-A}\hat{\varphi}_{\ell,d}(\mu)&\leq C\ell\int_{0}^\infty (1+\ell r)^{-N}dN_A(r)\\
    &= CN \ell\int_0^\infty (1+r)^{-N-1}N_A\left(\frac{r}{\ell}\right)dr.
\end{align*}
Let $\Gamma=\cup_{k\in\ZZ}\tilde{\gamma}_k^3$, then the second term in \eqref{e:trace-lower-1} can be estimated as follows.
\begin{align*}
    &\langle F_A,\varphi_{\ell,d}\rangle=\frac{i}{2\pi}\int_{\Gamma}\hat{\varphi}_{\ell,d}(\lambda)\frac{d}{d\lambda}\log\zeta_1(\lambda)d\lambda-\sum\limits_{\mu_j\in\Res(P)\cap\tilde{\Omega},\Im\mu_j\leq -A}\hat{\varphi}_{\ell,d}(\mu_j)\\
    &\lesssim\int_\Gamma \ell e^{(d-\ell)\Im\zeta}(1+|\ell\zeta|)^{-N}\langle \zeta\rangle^{2n+2}d\zeta+\sum\limits_{\mu_j\in\Res(P)\cap\tilde{\Omega},\Im\mu_j\leq -A} \ell e^{(d-\ell)\Im \mu_j}(1+\ell|\mu_j|)^{-N}\\
    &=\Ocal(\ell^{-2n-2}e^{(d-\ell)(-A+\epsilon)}).
\end{align*}
Let $\gamma_0$ be a primitive closed orbit, $d=kT_{\gamma_0}$, then for some $\alpha=\alpha(\gamma_0)>0$ we have
\begin{align*}
    \sum\limits_\gamma \frac{T_\gamma^\#\varphi_{\ell,d}(T_\gamma)}{|\det(I-\mathcal{P}_\gamma)|}\geq \frac{T_{\gamma_0}\varphi_{\ell,d}(kT_{\gamma_0})}{|\det(I-\mathcal{P}_{k\gamma_0})|}\geq Ce^{-\alpha d}.
\end{align*}\
Thus from \eqref{e:trace-lower-1} we get the following inequality
\begin{align}\label{e:trace-lower-2}
    e^{-\alpha d}\lesssim \ell\int_0^\infty (1+r)^{-N-1}N_A\left(\frac{r}{\ell}\right)dr+\ell^{-2n-2}e^{(d-\ell)(-A+\epsilon)}.
\end{align}
Separating the first integral as $\int_0^{\ell^{-\epsilon_0}}+\int_{\ell^{-\epsilon_0}}^\infty$ and note
\begin{align*}
     \int_{\ell^{-\epsilon_0}}^\infty (1+r)^{-N-1}N_A\left(\frac{r}{\ell}\right)dr\lesssim \int_{\ell^{-\epsilon_0}}^\infty (1+r)^{-N-1}(r/\ell)^{n+1}dr\lesssim \ell^{\epsilon_0(N-n-1)-n-1},
\end{align*}
we can take $N=(1+\epsilon_0^{-1})(n+1)$ and conclude from \eqref{e:trace-lower-2} that
\begin{equation}\label{e:trace-lower-3}
    e^{-\alpha d}\lesssim \ell N_A(\ell^{-\epsilon_0-1})+\ell+\ell^{-2n-2}e^{(d-\ell)(-A+\epsilon)}.
\end{equation}

Pick $\ell=e^{-M d}$, $M>\alpha$ and $A=(2n+2)M+\alpha+1$, we have 
\begin{equation*}
    \ell^{-2n-2}e^{(d-\ell)(-A+\epsilon)}\lesssim e^{-(\alpha+1-\epsilon)d}\ll e^{-\alpha d},\quad \ell\ll e^{-\alpha d}
\end{equation*}
as $d\to \infty$. So we conclude from \eqref{e:trace-lower-3} that for $\ell=e^{-kMT_{\gamma_0}}$, $k\in\mathbb{N}$,
\begin{equation*}
    N_A(\ell^{-\epsilon_0-1})\gtrsim \ell^{-1} e^{-\alpha d}=\ell^{-1+\alpha M^{-1}},
\end{equation*}
This implies for any $r\gg 1$,
\begin{equation*}
    N_A(r)\gtrsim r^{(1-\alpha M^{-1})(1+\epsilon_0)^{-1}}.
\end{equation*}
Taking $M$ large enough and $\epsilon_0$ small enough so that
$(1-\alpha M^{-1})(1+\epsilon_0)^{-1}>\delta$ (for example, $M=10\alpha(1-\delta)^{-1}$, $\epsilon_0=(1-\delta)/10$), we conclude Theorem~\ref{t:lower}.
\section{Axiom A flows with transversality condition}
\label{s:axioma}
In this section, we discuss the generalization of Theorem \ref{t:upper} and \ref{t:lower} to Axiom A flows. In this way, we need a transversal condition to assure the meromorphic continuation, see \cite{meddane}. This condition is natural for the construction of suitable weight functions for the anisotropic Sobolev spaces.

\subsection{Review of Axiom A flows}
\label{s:axiomabasic}
We first briefly review the basic dynamical setup of Axiom A flows: Let $M$ be a compact $C^\infty$ Riemannian manifold without boundary, $n=\dim M$ and $\varphi^t=\exp(tX)$ be a $C^\infty$ flow generated by a vector field $X$. We recall the following definitions, see \cite{smale} for details:
\begin{defi}
We call $x\in M$ a nonwandering point if for every neighbourhood $V$ of $x$ and every $T>0$, there exists $t\in\mathbb{R}$ such that $|t|\geq T$ and $\varphi^t(V)\cap V\neq\varnothing$. The set consisting of all nonwandering points is called the nonwandering set.
\end{defi}
\begin{defi}
The flow $\varphi^t$ is called Axiom A if
\begin{itemize}
    \item the nonwandering set is the disjoint union of the set of all fixed points $\mathcal{F}$ and the closure of the set of all closed orbits $\mathcal{K}$;
    \item all fixed points are hyperbolic for $\varphi^t$;
    \item $\mathcal{K}$ is hyperbolic for $\varphi^t$.
\end{itemize}
\end{defi}

To study the Axiom A flow, we decompose it to several basic sets as follows.
\begin{defi}
We say that a compact $\varphi^t$-invariant set $K\subset M$ is locally maximal if there exists a neighbouthood $V$ of $K$ such that
\begin{align*}
    K=\bigcap\limits_{t\in\mathbb{R}}\varphi^t(V).
\end{align*}
\end{defi}
\begin{defi}
A compact $\varphi^t$-invariant set $K\subset M$ is called a basic hyperbolic set if
\begin{itemize}
    \item $K$ is locally maximal;
    \item $K$ is hyperbolic;
    \item the flow $\varphi^t|_K$ is topologically transitive;
    \item $K$ is the closure of the union of all closed orbits of $\varphi^t|_K$.
\end{itemize}
We also allow a basic hyperbolic set to be the set of a single hyperbolic fixed point.
\end{defi}
We recall the following lemmas from \cite{smale}.
\begin{lem}
Assume that $\varphi^t$ is an Axiom A flow and let $\mathcal{F}$ be the set of fixed points, $\mathcal{K}$ be the closure of the union of all closed orbits, then we can write the nonwandering set $\mathcal{F}\sqcup\mathcal{K}$ as a finite disjoint union
\begin{equation}
\label{e:axioma-decomposition}
    \mathcal{F}\sqcup\mathcal{K}=K_1\sqcup K_2\sqcup \cdots \sqcup K_N
\end{equation}
where each $K_j$ is a basic hyperbolic set.
\end{lem}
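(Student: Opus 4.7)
The statement is Smale's classical spectral decomposition for Axiom A flows, and the plan is to reduce the fixed-point part to a trivial observation and then apply the spectral decomposition machinery to the closure of the periodic orbits.

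First, I would argue that $\mathcal{F}$ is a finite set $\{p_1,\dots,p_k\}$: each fixed point is hyperbolic, hence isolated among fixed points by applying the inverse function theorem to $d\varphi^1 - I$ at each $p_i$, and compactness of $M$ does the rest. Each singleton $\{p_i\}$ is trivially a basic hyperbolic set. It therefore suffices to decompose the compact, $\varphi^t$-invariant, hyperbolic set $\mathcal{K}$ into finitely many basic pieces.

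Second, on $\mathcal{K}$ I would establish a local product structure: there exist $\delta,\varepsilon>0$ such that whenever $x,y\in\mathcal{K}$ satisfy $d(x,y)<\delta$, the set $W^u_\varepsilon(x)\cap\bigcup_{|t|<\delta}\varphi^t W^s_\varepsilon(y)$ consists of a single point $[x,y]$ lying in $\mathcal{K}$. This follows from the stable/unstable manifold theorem together with the Anosov shadowing lemma for flows applied to the hyperbolic set $\mathcal{K}$. Then define an equivalence relation on $\mathcal{K}$ by declaring $x\sim y$ if for every $T,\eta>0$ there exist $s,t>T$ with $d(\varphi^s x,y)<\eta$ and $d(\varphi^t y,x)<\eta$. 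Symmetry is built in, reflexivity follows from $x\in\mathcal{K}$ (closure of periodic orbits, so recurrence near $x$ is automatic), and transitivity uses the shadowing lemma to concatenate near-orbit segments $x\to y\to z$ into a single near-orbit $x\to z$.

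Third, using the local product structure I would show every equivalence class is open in $\mathcal{K}$: if $x\sim y$ and $x'\in\mathcal{K}$ is within $\delta$ of $x$, then the brackets $[x,x']$ and $[x',x]$ lie in $\mathcal{K}$ and their orbits synchronize the recurrence between $x'$ and $x$, yielding $x'\sim x$. Consequently the equivalence classes $K_{k+1},\dots,K_N$ are simultaneously open and closed in the compact space $\mathcal{K}$, so there are only finitely many, each compact and $\varphi^t$-invariant. Topological transitivity is immediate from the definition of $\sim$. Density of periodic orbits in each $K_j$ follows from the Anosov closing lemma applied to the recurrent orbit segments guaranteed by $\sim$. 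Local maximality is obtained by choosing pairwise disjoint neighborhoods $V_j$ of $K_j$: any $\varphi^t$-invariant set contained in $V_j$ is composed of nonwandering points which, by the local product structure and the openness of equivalence classes, must all be equivalent to points of $K_j$ and hence lie in $K_j$.

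The main obstacle is the openness step and the verification that the bracket $[x,x']$ actually stays in $\mathcal{K}$ rather than merely in the ambient manifold; this is where the Anosov closing lemma is indispensable, since it lets one approximate recurrent orbit segments by genuine periodic orbits whose closure is $\mathcal{K}$. Everything else is bookkeeping given the local product structure.
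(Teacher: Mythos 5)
The paper does not prove this lemma; it simply recalls Smale's spectral decomposition theorem from \cite{smale}, so there is nothing internal to compare your argument against. Your treatment of $\mathcal{F}$ is fine: hyperbolicity makes $d\varphi^1(p)-I$ invertible, so each fixed point is isolated, and compactness makes $\mathcal{F}$ finite, with singletons allowed as basic sets. The genuine gap is in the relation you define on $\mathcal{K}$: you declare $x\sim y$ when the forward orbit of $x$ accumulates at $y$ and the forward orbit of $y$ accumulates at $x$, i.e.\ $y\in\omega(x)$ and $x\in\omega(y)$. This is not reflexive. A point of $\mathcal{K}$ need not be positively recurrent: $\mathcal{K}$ contains, for instance, heteroclinic points $x$ whose $\omega$-limit set is a periodic orbit not passing through $x$, so $x\notin\omega(x)$. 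Being in the closure of the periodic orbits gives periodic points accumulating at $x$, which is strictly weaker than the orbit of $x$ returning near $x$. Transitivity has the same defect: shadowing the concatenated pseudo-orbit from $x$ through $y$ to $z$ produces an actual orbit of some \emph{other} point that starts near $x$ and later passes near $z$; it does not show that $\varphi^s(x)$ itself comes near $z$, so it does not give $z\in\omega(x)$.

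The standard repair, and the one in Smale and Bowen, is to define the relation only on the dense set of periodic points, declaring $p\sim q$ when the unstable manifold of the orbit of $p$ meets the stable manifold of the orbit of $q$, and conversely. Reflexivity is then trivial, symmetry is built in, and transitivity follows from the inclination lemma; local product structure and the Anosov closing lemma show each class is relatively open among periodic points, so there are finitely many, and their closures are the basic sets with the required compactness, invariance, transitivity, density of periodic orbits, and local maximality. (Chain equivalence via $\varepsilon$-pseudo-orbits in both directions is another option that is manifestly an equivalence relation and combines well with shadowing and expansiveness.) With the relation fixed along these lines, the remaining steps of your outline—openness of classes, finiteness by compactness, local maximality via pairwise disjoint isolating neighborhoods—are the right skeleton.
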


Now we state the strong transversality condition. 
\begin{itemize}
\item[(T)] For any pairs $(x_-,x_+)\in K_i\times K_j$ and every $x\in W^u(x_-)\cap W^s(x_+)$,
$$T_xW^u(x_-)+T_xW^{so}(x_+)=T_xW^{uo}(x_-)+T_xW^s(x_+)=T_xM.$$
\end{itemize}
Here $W^u$, $W^s$, $W^{uo}$, $W^{so}$ are the unstable, stable, weakly unstable, weakly stable manifolds, respectively, see \cite[\S 17.4]{dynamic} or \cite{dyatlovnotes}. Under this assumption, Meddane \cite{meddane} proves the meromorphic continuation of the resolvent and thus defines the Pollicott--Ruelle resonances. We still denote $P=-iX$, and the set $\Res(P)$ of Pollicott--Ruelle resonances are the poles of the meromorphic continuation of $R(\lambda)=(P-\lambda)^{-1}:C^\infty(M)\to\mathcal{D}'(M)$ from $\Im \lambda\gg1$ to $\mathbb{C}$. One of the key ingredient is the following result on unrevisited set.

\begin{lem}(\cite[Proposition 2.3]{meddane})
\label{l:unrevisited}
Let $K$ be a basic hyperbolic set, and $\varphi^t$ satisfies the strong transversal condition (T). Then there exists arbitrarily small unrevisited neighbourhood $\mathcal{V}$ of $K$, i.e. for any $m\in\mathbb{N}$,
$$x,\varphi^m(x)\in\mathcal{V}\quad\Rightarrow\quad \forall k=0,1,\ldots,m, \varphi^k(x)\in\mathcal{V}.$$
\end{lem}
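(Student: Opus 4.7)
The plan is to build $\mathcal{V}$ as a tubular neighborhood of $K$ modeled on the local product structure given by hyperbolicity, and then verify the unrevisited property by a compactness argument combined with the strong transversality condition. First I would fix a compact neighborhood $V_0$ of $K$ realizing local maximality, so $K=\bigcap_{t\in\mathbb{R}}\varphi^t(V_0)$, and choose uniform $\epsilon>0$ so that each $y\in K$ carries local stable/unstable manifolds $W^s_\epsilon(y), W^u_\epsilon(y)$ depending continuously on $y$. Intersecting the local product boxes $[W^s_\epsilon(y), W^u_\epsilon(y)]$ with $V_0$ and taking their union yields a candidate $\mathcal{V}_\epsilon \subset V_0$ that shrinks to $K$ as $\epsilon \to 0$.

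To check that $\mathcal{V}_\epsilon$ is unrevisited for $\epsilon$ small, I would argue by contradiction. Suppose there are $\epsilon_n \to 0$, $x_n \in \mathcal{V}_{\epsilon_n}$, integers $m_n \geq 2$ and $0<k_n<m_n$ with $\varphi^{m_n}(x_n)\in \mathcal{V}_{\epsilon_n}$ but $\varphi^{k_n}(x_n)\notin \mathcal{V}_{\epsilon_n}$. If $m_n$ is bounded, after extracting so that $m_n$ and $k_n$ are constant and $x_n\to y\in K$, continuity of $\varphi^{k_n}$ gives $\varphi^{k_n}(x_n)\to \varphi^{k}(y)\in K$, contradicting that $\varphi^{k_n}(x_n)$ stays uniformly outside the shrinking $\mathcal{V}_{\epsilon_n}$. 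If $m_n\to\infty$, the hyperbolic dichotomy forces the unstable coordinate of $x_n$ and the stable coordinate of $\varphi^{m_n}(x_n)$ to tend to zero, so (up to subsequence) the excursion converges to an orbit sitting in $W^u(K)\cap W^s(K)\setminus K$ between two points $y_-,y_+\in K$.

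It is here that the strong transversality condition (T) enters, applied to the pair $(K_i,K_j)=(K,K)$: any such heteroclinic/homoclinic orbit must satisfy $T_xW^u(y_-)+T_xW^{so}(y_+)=T_xM$ at every point $x$ along it. Using this transversality together with continuous variation of the weak stable/unstable foliations, one can thicken the product-box construction of $\mathcal{V}_\epsilon$ along the flow direction and along the corresponding weak unstable/stable directions so that the limiting excursion is actually captured inside $\mathcal{V}_\epsilon$ for all large $n$, contradicting $\varphi^{k_n}(x_n)\notin \mathcal{V}_{\epsilon_n}$.

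The main obstacle is the unbounded case $m_n\to\infty$, where a priori the orbit may leave a small neighborhood of $K$ and spend an arbitrarily long time away before returning. The strong transversality condition is precisely what rules this scenario out, but extracting a workable geometric picture — one in which the limiting excursion orbit is manifestly captured by the product-box neighborhood — requires a careful, uniform application of the stable/unstable manifold theorem and of the local product structure along $K$; this is the technical heart of Meddane's argument \cite{meddane}.
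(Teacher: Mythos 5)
The paper does not give its own proof of this lemma; it is cited directly from Meddane \cite[Proposition 2.3]{meddane}, so I can only assess your sketch against what a correct argument would require.

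There are two genuine problems with your argument. First, the contradiction setup does not actually deliver a contradiction, even in the bounded case. You let $\epsilon_n\to 0$ and suppose $\varphi^{k_n}(x_n)\notin\mathcal{V}_{\epsilon_n}$; but since $\mathcal{V}_{\epsilon_n}$ is \emph{shrinking}, this hypothesis becomes \emph{weaker}, not stronger, as $n\to\infty$, and it is perfectly consistent with $\varphi^{k_n}(x_n)\to K$. Your claim that ``$\varphi^{k_n}(x_n)$ stays uniformly outside the shrinking $\mathcal{V}_{\epsilon_n}$'' is unjustified and in general false. To make the compactness argument bite, one should instead fix a compact isolating neighbourhood $V_0$ with $K=\bigcap_{t}\varphi^t(V_0)$ and consider the family $\mathcal{V}_N=\{x:\varphi^t(x)\in V_0 \text{ for all } |t|\le N\}$, which shrinks to $K$ as $N\to\infty$ by local maximality (this is essentially Proposition~\ref{p:property-K}(2)). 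The elementary but crucial observation is that if $x,\varphi^m(x)\in\mathcal{V}_N$ \emph{and the whole orbit segment stays in $V_0$}, then all intermediate iterates automatically lie in $\mathcal{V}_N$; so one is reduced to ruling out excursions that leave $V_0$ and return, and this is where compactness and the transversality hypothesis must be used.

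Second, and more seriously, you have the role of the strong transversality condition backwards. Once you have extracted (in the unbounded case) a limiting orbit lying in $W^u(K)\cap W^s(K)\setminus K$, you propose to ``thicken the product-box construction of $\mathcal{V}_\epsilon$ \ldots so that the limiting excursion is actually captured inside $\mathcal{V}_\epsilon$''. That cannot work: a genuine excursion necessarily leaves every sufficiently small neighbourhood of $K$, so no arbitrarily small box can be thickened to swallow it. What transversality is actually for is to show the limiting orbit cannot exist at all, i.e.\ that $W^u(K)\cap W^s(K)=K$. The mechanism is the Birkhoff--Smale/shadowing argument: a transverse homoclinic or heteroclinic point is nonwandering, and for an Axiom A flow the nonwandering set is exactly the union of basic sets, so such a point lies in some basic set $K'$; its $\alpha$- and $\omega$-limit sets are in $K$, so $K'$ lies both above and below $K$ in the partial order, which forces $K'=K$ by the no-cycle property (implied by strong transversality), contradicting the point lying outside $K$. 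Without this, your sketch never closes. You should also be aware that the case where the excursion time itself tends to infinity requires a further argument (the excursion then accumulates on other basic sets, and one needs no-cycle again to rule it out); the unbounded case cannot simply be reduced to a single limiting homoclinic orbit by compactness.
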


\subsection{From open hyperbolic systems to Axiom A flows}
The key to pass from open hyperbolic systems to Axiom A flows is the following lemma \cite[Lemma 3.2]{axioma}, which states that near any basic hyperbolic set $K_j$, the flow can be modified into an open hyperbolic system. Strictly speaking, \cite[Lemma 3.2]{axioma} only deals with the case for $\mathcal{K}$, the case of fix points $\Fcal$ is simpler and we review in \S \ref{s:fixedpoints}.
\begin{lem}
\label{l:basic-to-open}
Let $K\subset M$ be a locally maximal hyperbolic set for $\varphi^t$. Then there exists a neighbourhood $\mathcal{U}$ of $K$ with $C^\infty$ boundary and a $C^\infty$ vector field $X_0$ on $\overline{\mathcal{U}}$ such that 
\begin{itemize}
    \item The boundary of $\mathcal{U}$ is strictly convex in the sense that 
    \begin{align}
    \label{convex}
        X_0\rho(x)=0\Rightarrow X_0^2\rho(x)<0
    \end{align}
    for a boundary defining function, i.e. $\sgn \rho(x)=\mathbbm{1}_{\mathcal{U}}-\mathbbm{1}_{\overline{\mathcal{U}}^c}$ with $d\rho\neq 0$ on $\partial \mathcal{U}$;
    \item $X_0=X$ in a neighbourhood of $K$;
    \item $K=\bigcap\limits_{t\in\mathbb{R}}\varphi_0^t(\mathcal{U})$ where $\varphi_0^t=\exp(tX_0)$.
\end{itemize}
In particular, $(\overline{\mathcal{U}},X_0)$ is an open hyperbolic system with trapped set $K$.
\end{lem}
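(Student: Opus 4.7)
The plan is to build a smooth nonnegative function $\phi$ on a precompact neighbourhood of $K$, vanishing exactly on $K$, whose small sublevel sets have strictly convex boundary with respect to $X$, and to take $\mathcal{U}$ to be such a sublevel set with $X_0=X$ on $\overline{\mathcal{U}}$.

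First I would use the local maximality hypothesis to fix an open neighbourhood $V$ of $K$ with $\overline{V}$ compact and $K=\bigcap_{t\in\RR}\varphi^t(V)$. Using the hyperbolicity of $K$, I would equip a neighbourhood of $K$ with an adapted (Mather-type) Riemannian metric in which the continuous splitting $T_xM=E_0\oplus E_s\oplus E_u$ is respected and $d\varphi^t$ contracts $E_s$ (resp.\ expands $E_u$) at a uniform exponential rate starting from $t=0$. The local model is then the following: in adapted coordinates $(\tau,u,s)$ near a point $x_0\in K$, with $\tau$ a flow parameter and $u,s$ parametrising local unstable and stable leaves, the function $\phi_0(u,s)=|u|^2+|s|^2$ satisfies
\begin{equation*}
X\phi_0 = 2\lambda(|u|^2-|s|^2)+\mathcal{O}(|(u,s)|^3),\qquad X^2\phi_0=4\lambda^2(|u|^2+|s|^2)+\mathcal{O}(|(u,s)|^3).
\end{equation*}
Hence on $\{\phi_0=\varepsilon\}$ with $\varepsilon>0$ small, $X\phi_0=0$ forces $|u|\approx|s|$ and then $X^2\phi_0>0$; setting $\rho:=\varepsilon-\phi_0$ yields exactly $X\rho=0\Rightarrow X^2\rho<0$.

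Second, I would globalise this along $K$: cover $K$ by finitely many flow boxes in which the above model holds and patch the local $\phi_0$'s via a smooth partition of unity into a function $\phi$ defined on a tubular neighbourhood of $K$ with $\phi\ge 0$, $\phi^{-1}(0)=K$, and the same leading quadratic behaviour transverse to $K$. For sufficiently small $\varepsilon>0$, set $\mathcal{U}:=\{\phi<\varepsilon\}$; by compactness of $K$ and continuity of $\phi$ one has $\overline{\mathcal{U}}\subset V$. Taking $X_0:=X$ on $\overline{\mathcal{U}}$ gives $X_0=X$ near $K$ for free, strict convexity \eqref{convex} at $\partial\mathcal{U}$ follows from the local computation, and for the trapped set any $x\in\bigcap_t\varphi_0^t(\mathcal{U})$ also lies in $\bigcap_t\varphi^t(V)=K$ by local maximality, while $K\subset\mathcal{U}$ is $\varphi_0$-invariant, so equality holds.

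The main obstacle I expect is the patching step: the stable/unstable splitting and local invariant manifolds are in general only H\"older continuous across $K$, so the locally quadratic $\phi_0$'s cannot be glued smoothly to give an exactly quadratic normal form, and one must instead show that the strict convexity inequality survives the gluing. This should be feasible because the inequality is quantitatively open: on $\{\phi=\varepsilon\}\cap\{X\phi=0\}$ the main term in $X^2\phi$ is of order $\varepsilon$ while the errors from patching and the H\"older deviation of the splitting from adapted coordinates are $o(\varepsilon)$, so shrinking $\varepsilon$ absorbs them. An alternative that sidesteps the splitting is to take
\begin{equation*}
\phi(x)=\int_{\RR}\chi(\varphi^t x)\,\eta(t)\,dt
\end{equation*}
with $\chi\in C_c^\infty(V)$ equal to $1$ near $K$ and a suitable nonnegative weight $\eta$, exploiting the fact that orbits starting outside $K$ spend only bounded total time in $V$; the computations $X\phi=-\int\chi(\varphi^t x)\eta'(t)\,dt$ and $X^2\phi=\int\chi(\varphi^t x)\eta''(t)\,dt$ reduce strict convexity to a choice of $\eta$ whose second derivative has the appropriate sign on the support of $\chi\circ\varphi^\cdot$ at tangent points.
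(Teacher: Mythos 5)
The paper does not prove this lemma; it is imported verbatim from \cite[Lemma 3.2]{axioma}. Your primary approach has a genuine gap: you implicitly treat $K$ as though it were a submanifold. The local model $\phi_0(u,s)=|u|^2+|s|^2$, with $K$ locally given by $\{u=s=0\}$ and the flow in saddle normal form, is valid at a hyperbolic fixed point (it is exactly the computation in \S\ref{s:fixedpoints} of the paper) and, via Floquet coordinates, along a closed orbit. But at a non-periodic point of a general basic set, $K$ locally looks like (Cantor)$\times$(Cantor)$\times$(interval); there is no $C^\infty$ chart in which such a set is a coordinate plane, and no $C^\infty$ function vanishing exactly on $K$ with uniformly quadratic transverse growth, since squared distance to a fractal is only Lipschitz. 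So ``patch the local $\phi_0$'s'' has nothing to start from at most points of $K$. This is a more basic obstruction than the H\"older regularity of $E_{s/u}$ that you flag, and it is not absorbed by shrinking $\varepsilon$.

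Your alternative $\phi(x)=\int\chi(\varphi^t x)\eta(t)\,dt$ is much closer to the isolating-block type constructions that do work here, but as written it also has problems. The claim that orbits off $K$ spend only \emph{bounded} total time in $V$ is false: the time is finite for each such orbit but unbounded as the orbit approaches $K$. For $\phi$ to be $C^\infty$ you must take $\eta$ compactly supported; then $\phi\equiv\int\eta$ on a whole neighbourhood of $K$, and the relevant domain is a superlevel set $\{\phi>\int\eta-\varepsilon\}$, not a sublevel set, which should be made explicit. After an integration by parts, strict convexity at a tangency becomes the requirement $\int\eta'(t)\,X\chi(\varphi^t x)\,dt>0$ subject to $\int\eta'(t)\,\chi(\varphi^t x)\,dt=0$; this couples $\eta'$ to the full escape profile $t\mapsto\chi(\varphi^t x)$, including possible re-entries into $\supp\chi$ when $V$ is merely isolating, and is not a pointwise sign condition on $\eta''$. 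Finally, observe that the lemma only asks for $X_0=X$ \emph{near} $K$, not on all of $\overline{\mathcal U}$; the freedom to modify the vector field near $\partial\mathcal U$ is part of the cited proof and is unused in both of your approaches.
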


Using this lemma, we can prove that the Pollicott--Ruelle resonances for the Axiom A flow are exactly those resonances for the open hyperbolic systems obtained from each nontrivial basic hyperbolic set.

\begin{prop}
Let $\varphi^t=\exp(tX)$ be an Axiom A flow on $M$ satisfying the transversality condition (T),  $P=-iX$ and $R(\lambda)=(P-\lambda)^{-1}:C^\infty(M)\to\mathcal{D}'(M)$ be the resolvent. Let $K$ be a basic hyperbolic set and $(\overline{\mathcal{U}}_K,X_K)$ the open hyperbolic system as in Lemma \ref{l:basic-to-open}, extended to $M$ as in Section \ref{s:open}. Let $R_K(\lambda)=(P_K-\lambda)^{-1}:C^\infty(M)\to\mathcal{D}'(M)$ be the resolvent with $P_K=-iX_K$, then there exists a function $\chi_K\in C_c^\infty(\mathcal{U}_K)$ supported in a sufficiently small neighbourhood of $K$ and equals 1 near $K$ such that for all $\Im\lambda\gg 1$,
\begin{align}
\label{e:res-opentoaxiom}
    \chi_K R(\lambda)\chi_K =\chi_K R_K(\lambda)\chi_K: C^\infty(M)\to\mathcal{D}'(M)
\end{align}
\end{prop}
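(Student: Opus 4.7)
The plan is to use the propagator representation of the resolvent in the half plane $\{\Im\lambda \gg 1\}$ and match the two integrands pointwise, exploiting that $\varphi^t$ and $\varphi_K^t$ coincide on a small unrevisited neighborhood of $K$. Specifically, I would choose $\chi_K$ supported in an open neighborhood $\mathcal{V}$ of $K$ satisfying (i) $\mathcal{V}$ is unrevisited for $\varphi^t$ (Lemma \ref{l:unrevisited}), (ii) $\mathcal{V} \subset \mathcal{U}_K$ and $X = X_K$ on $\mathcal{V}$ (Lemma \ref{l:basic-to-open}, after shrinking), and (iii) $\mathcal{V}$ is unrevisited for $\varphi_K^t$ after a further shrinking: by Proposition \ref{p:property-K}(2) and the convexity of $\mathcal{U}_K$ with respect to $X_K$, any $\varphi_K^t$-orbit leaving a small enough neighborhood of $K$ must exit $\overline{\mathcal{U}_K}$ and cannot return.

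For $\Im \lambda > C_0$ sufficiently large, both resolvents have convergent propagator representations on $C^\infty(M)$,
\begin{align*}
    R(\lambda)g = i\int_0^\infty e^{it\lambda}\, g\circ\varphi^{-t}\, dt, \qquad R_K(\lambda)g = i\int_0^\infty e^{it\lambda}\, g\circ\varphi_K^{-t}\, dt,
\end{align*}
where the absorbing potentials $Q_\infty, q_1$ used to extend $X_K$ to $M$ (Section \ref{s:open}) are supported outside $\overline{\mathcal{U}_K}$ and so do not interfere after sandwiching with $\chi_K$. For $f \in C^\infty(M)$ and $x \in \supp \chi_K \subset \mathcal{V}$, the integrand $\chi_K(x)\chi_K(\varphi^{-t}(x))f(\varphi^{-t}(x))$ is nonzero only when $\varphi^{-t}(x) \in \mathcal{V}$; the unrevisited property then forces $\varphi^{-s}(x) \in \mathcal{V}$ for all $s\in[0,t]$, and ODE uniqueness together with $X = X_K$ on $\mathcal{V}$ gives $\varphi^{-s}(x) = \varphi_K^{-s}(x)$ throughout. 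The reverse inclusion is symmetric, using the unrevisited property for $\varphi_K^t$. Hence the two integrands coincide pointwise, and integrating yields \eqref{e:res-opentoaxiom}.

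The main technical point is the joint unrevisited choice of $\mathcal{V}$ in the first paragraph; the $\varphi^t$ version is exactly Lemma \ref{l:unrevisited}, while the analogue for the modified flow $\varphi_K^t$ requires combining the open-hyperbolic structure with the convexity of $\mathcal{U}_K$ relative to $X_K$ to rule out re-entering orbits. The remainder---propagator representation and pointwise matching---is routine bookkeeping.
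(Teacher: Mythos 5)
Your approach is essentially the same as the paper's: represent both resolvents as $i\int_0^\infty e^{it\lambda}\varphi^{-t}\,dt$ for $\Im\lambda\gg 1$, choose $\chi_K$ so that whenever the cutoff fails to kill a term, the backward orbit stays in a region where $X=X_K$, and conclude $\varphi^{-s}(x)=\varphi_K^{-s}(x)$ by ODE uniqueness. Two technical points, however, need more care than your sketch gives them. First, Lemma~\ref{l:unrevisited} only provides a neighbourhood unrevisited at \emph{integer} times $m\in\mathbb{N}$; your step ``the unrevisited property then forces $\varphi^{-s}(x)\in\mathcal{V}$ for all $s\in[0,t]$'' needs a bridge to continuous time. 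The paper fixes this by taking an auxiliary unrevisited $\mathcal{V}'$ with $\varphi^r(\mathcal{V}')\subset\mathcal{V}$ for $r\in[-1,1]$ and choosing $\supp\chi_K$ so that its $[-1,1]$ orbit lies in $\mathcal{V}'$: then the nearest integer $m$ to $t$ has $\varphi^{-m}(x)\in\mathcal{V}'$, integer-unrevisitedness places $\varphi^{-k}(x)\in\mathcal{V}'$ for all integers $k$ between $0$ and $m$, and the $[-1,1]$ buffer upgrades this to $\varphi^{-s}(x)\in\mathcal{V}$ for all real $s\in[0,t]$. Second, your justification of (iii) --- ``any $\varphi_K^t$-orbit leaving a small enough neighborhood of $K$ must exit $\overline{\mathcal{U}_K}$ and cannot return'' --- is not quite the right mechanism. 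The actual argument uses convexity of $\mathcal{U}_K$ to confine the segment of the $\varphi_K$-orbit between the two endpoints to $\mathcal{U}_K$, together with shrinking $\supp\chi_K$ so that its $[-T,T]$-orbit under $\varphi_K$ stays in $\mathcal{U}_K$; then for any intermediate time $s$ both $\varphi_K^{s\pm T}(x)\in\mathcal{U}_K$, and Proposition~\ref{p:property-K}(2) forces $\varphi_K^s(x)\in\mathcal{V}$. Both gaps are fixable and your overall strategy is sound, but as written these two steps would not compile into a complete proof.
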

\begin{proof}
Let $\varphi_K^t=e^{tX_K}$ be the flow generated by $X_K$. We only need to prove that for all $t\in\mathbb{R}$,
\begin{align}
\label{e:flow-opentoaxiom}
    \chi_K\varphi^{-t*}\chi_K=\chi_K\varphi_K^{-t*}\chi_K:C^\infty(M)\to C^\infty(M),
\end{align}
which means if $x\in\supp\chi_K$, $t\in\mathbb{R}$, then $\varphi^t(x)\in\supp\chi_K$ if and only if $\varphi^t_K(x)\in\supp\chi_K$ and in this case $\varphi^t(x)=\varphi^t_K(x)$.  Given this, we can use the following relation
$$R(\lambda)=(-iX-\lambda)^{-1}=i\int_0^\infty e^{it\lambda}\varphi^{-t}dt$$
and the analogue for $R_K$ and $X_K$ for $\Im\lambda\gg1$ to obtain \eqref{e:res-opentoaxiom}.  

Let us fix a smaller neighbourhood $\mathcal{V}\subset \Ucal_K$ such that $X=X_K$ on $\mathcal{V}$.
By Lemma \ref{l:unrevisited}, we may choose an unrevisited neighbourhood $\mathcal{V}'$ such that $\varphi^t(\mathcal{V}')\subset\mathcal{V}$ for $t\in[-1,1]$. Now we choose $\chi_K\in C_c^\infty(\Ucal_K)$ such that
\begin{itemize}
\item[(a)] $\varphi^t(\supp\chi_K)\subset \mathcal{V}'$ for $-1\leq t\leq 1$,
\item[(b)] $\varphi_K^t(\supp\chi_K)\subset \mathcal{U}_K$ for $-T\leq t\leq T$. Here $T>0$ is chosen as in Proposition \ref{p:property-K}(2)  such that $\varphi_K^T(x),\varphi_K^{-T}(x)\in\overline{\Ucal}_K$ implies that $x\in\mathcal{V}$. 
\end{itemize}

On one hand, if $x,\varphi^t(x)\in \supp\chi_K$, let $m$ be the integer with the same sign as $t\in\mathbb{R}$ such that $|m|$ is the smallest integer larger or equal to $|t|$. Then $\varphi^m(x)\in\mathcal{V}'$ by (a). Since $\mathcal{V}'$ is unrevisited for $\varphi^t$, we know $\varphi^k(x)\in\mathcal{V}'$ for every integer $k$ between 0 and $m$. Thus by the construction of $\mathcal{V}'$, $\varphi^s(x)\in\mathcal{V}$ for every $s$ between 0 and $t$. In particular, $X$ and $X_K$ agrees on $\varphi^s(x)$ for $s$ between 0 and $t$ and thus $\varphi^t(x)=\varphi^t_K(x)$.

On the other hand, if $x,\varphi_K^t(x)\in\supp\chi_K$, then by (b), $\varphi^{\pm T}(x),\varphi^{t\pm T}(x)\in\mathcal{U}_K$. Now the convexity \eqref{c:convexity-flow} of $\mathcal{U}_K$ with respect to $\varphi^t_K$ shows that for any $s$ between 0 and $t$, we have $\varphi_K^{s\pm T}(x)\in\mathcal{U}_K$ and thus by Proposition \ref{p:property-K}(2), $\varphi_K^s(x)\in \mathcal{V}$. As above, $X$ and $X_K$ agrees on $\varphi_K^s(x)$ for $s$ between 0 and $t$ and thus $\varphi^t(x)=\varphi^t_K(x)$. This justifies \eqref{e:flow-opentoaxiom} and thus proves \eqref{e:res-opentoaxiom}.
\end{proof}


\subsection{The case with a fixed point}\label{s:fixedpoints}

\cite{open} does not deal with the case of a hyperbolic fixed point $x_0$ technically. In this section, we study the dynamics near the fixed point. Let $e^{tA}=d\varphi^t|_{x_0}$ where $A:T_{x_0}M\to T_{x_0}M$. We may choose the coordinate so that $A$ is in real Jordan normal form, i.e. 
\begin{align*}
    A={\rm diag}\left(A_1,\cdots, A_k\right),\quad A_k=\begin{pmatrix}B_k&1&0&\cdots&0\\
    0&B_k&1&\cdots&0\\
    \vdots&\vdots&\vdots&\ddots&\vdots\\
    0&0&0&\cdots&B_k\end{pmatrix},
\end{align*}
where
\begin{align*}
B_k=\lambda_k\in\RR \text{ or } \begin{pmatrix}a_k&-b_k\\
    b_k&a_k\end{pmatrix}\in M_2(\RR).
\end{align*}
The condition that $x_0$ is hyperbolic means $\lambda_k\neq 0$ and $a_k\neq 0$. By conjugation (or equivalently, choosing a different coordinate), we may assume for some $0<\epsilon\ll 1$,
\begin{align*}
    A={\rm diag}\left(A_1,\cdots, A_k\right),\quad A_k=\begin{pmatrix}B_k&\epsilon&0&\cdots&0\\
    0&B_k&\epsilon&\cdots&0\\
    \vdots&\vdots&\vdots&\ddots&\vdots\\
    0&0&0&\cdots&B_k\end{pmatrix}.
\end{align*}
Suppose $x_0=0$ in the coordinate and let $\Ucal=B_r(0)$ for $r\ll 1$ and $\rho(x)=r^2-|x|^2$, then $X=Ax\cdot\partial_x+\Ocal(|x|^2)\partial_x$ and
\begin{align*}
    X^2\rho(x)=-2Ax\cdot\partial_x((Ax)\cdot x) +\Ocal(|x|^3)=-2(Ax)\cdot(A+A^T)x+\Ocal(|x|^3).
\end{align*}
Since $\Re(A_k+A_k^T)A_k=2\lambda_k^2+\Ocal(\epsilon)>0$ or $2a_k^2+\Ocal(\epsilon)>0$ for $\epsilon\ll 1$, we conclude $X^2\rho(x)=-2\Re((A+A^T)Ax\cdot x)+\Ocal(|x|^3)<0$ for all $0\neq x\ll 1$.

Now we claim $\{x_0\}$ is the trapped set $\{x_0\}=\cap_{t\in\RR} \varphi^t(\overline{\Ucal})$. If $x\in \cap_{t\in\RR} \varphi^t(\overline{\Ucal})$, then $x_t=\varphi^t (x)\in \overline{\Ucal}$ and $\frac{d}{dt}\rho(x_t)=X\rho(x_t)$. Since $\rho(x_t)$ is a concave function, if $X\rho(x_t)<0$ for some $t$, then $x_t\notin \overline{\Ucal}$ as $t\to \infty$. If $X\rho(x_t)>0$, then $x_t\notin\overline{\Ucal}$ as $t\to -\infty$. So $X\rho(x_t)$ has to be identically zero and thus $X^2\rho(x_t)=0$, which implies $x=x_0$. 

Now we have an open system $\Ucal$ such that
\begin{itemize}
    \item $\overline{\Ucal}$ is an $n$-dimensional smooth manifold with interior $\Ucal$ and boundary $\partial\Ucal$, $X$ is a smooth vector field vanishing only at $x_0$, and $\varphi^t=e^{tX}$;
    \item There is a boundary defining function $\rho:\overline{\Ucal}\to \RR$ such that $X^2\rho(x)\leq - |x|^2/C$;
    \item $\frac{d}{dt}|_{t=0}d\varphi^t:T_{x_0}\Ucal\to T_{x_0}\Ucal$ is a hyperbolic map.
\end{itemize}

As in \cite[Lemma 1.1]{open}, we can extend $\Ucal$ to a closed manifold $\Mcal$ such that $\Ucal$ is convex in the sense that
\begin{align}
    x,\varphi^T(x)\in \Ucal, \quad T\geq 0\Rightarrow \varphi^t(x)\in\Ucal,\quad\forall 0\leq t\leq T.
\end{align}
Moreover, the boundary  defining function $\rho$ can be extended to $\Mcal$ such that $X$ vanishes on $\{\rho=-\epsilon\}$ and
\begin{align*}
    |\rho(x)|\leq 2\epsilon, \rho(x)\neq -\epsilon, X\rho(x)=0 \Longrightarrow X^2\rho(x)<0.
\end{align*}
Let $\Gamma_\pm=\bigcap\limits_{\pm t\geq 0}\varphi^t(\overline{\Ucal})$, we have vector bundles $E_\pm^* $ over $\Gamma_\pm$ that extends $E_u^*$ and $E_s^*$ over $x_0$, defined (e.g. for $E_+^*$) as
\begin{align*}
    E_{+}^*:=\lim\limits_{t\to -\infty}(d\varphi^t(x))^{T}\tau_{x_0\to \varphi^t(x)}E_u^*(x_0)
\end{align*}
where $\tau$ is the parallel transport with respect to an arbitrary pre-fixed metric. As in \cite[Lemma 1.10]{open}, this converges to a continuous vector bundle over $\Gamma_+$ extending $E_u^*$.

It follows from the definition that
\begin{itemize}
    \item For $x\in \Gamma_\pm$, $\varphi^t(x)\to x_0$ as $t\to \mp\infty$;
    \item For $x\in\Gamma_\pm$, $\xi\in E_\pm^*(x)$, there exist $C,\theta>0$ such that $|d\varphi^t_x \xi|\leq Ce^{-\theta|t|}|\xi|$, $t \to \mp\infty$;
    \item For $x\in\Gamma_\pm$, $\xi\in T_x^*\Mcal\setminus E_\pm^*(x)$, as $t\to \mp\infty$, $|(d\varphi^{t}(x))^{-T}\xi|\to \infty$  and 
$$\frac{(d\varphi^{ t}(x))^{-T}\xi}{|(d\varphi^{t}(x))^{-T}\xi|}\to E_\mp^*|_{x_0}.$$
\end{itemize}

From this, we can construct a weight function $m\in C^\infty(S^*\Mcal;\RR)$ following \cite[Lemma 3.1]{open} such that
\begin{itemize}
    \item $m=\mp 1$ in a neighbourhood of $\kappa(E_\pm^*)$;
    \item $H_pm\leq 0$;
    \item $\supp m\subset\{\rho>-2\epsilon\}$ and $\supp m\cap \{\rho=-\epsilon\}\cap \{X_1\rho=0\}=\varnothing$.
\end{itemize}
The function $m$ is of the form 
\begin{align}\label{eqn:fix_pt_weight}
    m=m_- -m_+ +R(\chi_-\circ\pi-\chi_+\circ \pi)
\end{align}
where $m_\pm$ is constructed as in \cite[Lemma 1.12]{open} and $\chi_\pm$ is constructed as in \cite[Lemma 1.8]{open}. $R$ is taken to be a large number.
\begin{rem}
Intuitively, $m=-(A\xi)\cdot \xi$ would satisfy $H_pm\leq 0$ with correct signs near $\kappa(E_\pm^*)$. But it does not have compact support so we need to define it as in \eqref{eqn:fix_pt_weight}.
\end{rem}

The proof in \cite{open} and \S \ref{s:upper} goes through. In particular, since $\{p=1\}\cap T_{x_0}^*\Ucal=\varnothing$, we do not need the absorbing potential $W$ in Theorem \ref{t:resolvent}. We have
\begin{prop}
Suppose we have an open system with a single fixed point. Let $\beta>0$, then there exists $h_0>0$ such that for $0<h<h_0$ and $z\in [1-h,1+h]+i[-\beta h,1]$, the operator $P_h(z)=-ihX-i(Q_\infty+q_1)\in\Psi^1_h$ is invertible on $\Hcal^s_h$, with
\begin{align*}
    \|P_h(z)^{-1}\|_{\Hcal^s_h\to \Hcal^s_h}\leq \frac{C}{h}.
\end{align*}
In particular, an open system with a single fixed point has finitely many resonances inside a strip $\{\lambda\in\CC:\Im\lambda>-\beta\}$.
\end{prop}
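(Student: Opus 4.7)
The plan is to run the same argument as in the proof of Theorem~\ref{t:resolvent}, with the key simplification that the complex absorbing operator $W$ is no longer needed. At a hyperbolic fixed point $x_0$ we have $X(x_0)=0$, so $p(x_0,\xi)=\langle X(x_0),\xi\rangle\equiv 0$ on $T^*_{x_0}\Mcal$, and hence $p^{-1}(1)\cap T^*_{x_0}\Mcal=\varnothing$. Since the trapped set in the base is $K=\{x_0\}$, the trapped set inside the energy shell $\{p=1\}$ is empty, and any bicharacteristic entering a small neighborhood of $p^{-1}(1)$ must escape $\overline{\Ucal}$ in finite forward/backward time.

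First I would set up the anisotropic Sobolev space $\Hcal^s_h$ using the weight function $m$ constructed in~\eqref{eqn:fix_pt_weight}, together with the complex absorbing potentials $Q_\infty$ and $q_1$ from Section~\ref{s:spectral}, both supported outside $\overline{\Ucal}$. These affect neither the dynamics nor the resolvent inside $\Ucal$ but render the full operator elliptic in a neighborhood of $\{\rho\leq-2\varepsilon_0\}\cup(\{\rho=-\varepsilon_0\}\cap\{X_1\rho=0\})$, which allows a global $L^2$ framework on $\Mcal$.

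Next I would apply the propagation estimate Proposition~\ref{p:propa} to $P_h(z)$ with
\[
L=\pi^{-1}(L_-)\cup\pi^{-1}(L_+)\cup\kappa(E_s^*)\cup\kappa(E_u^*),\qquad B=0,
\]
and $B_1$ microlocalized on a neighborhood of the forward flow-out $\overline{\bigcup_{t\geq 0}e^{-tH_p}(\nbd(\WF_h(A)))}$. The absence of trapping at energy $1$ ensures this flow-out is contained in $\el_h(B_1)\cup\el_h(Q_\infty+q_1)$, while the radial estimates of Proposition~\ref{p:propa}(1) give $\Im P_h(z)\lesssim -h$ on $\Hcal^s_h$ near $\kappa(E_\pm^*)$, the anisotropic weight $m=\mp 1$ placing us in the correct Sobolev regularity at each radial source/sink. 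Assembling these ingredients exactly as in the end of the proof of Theorem~\ref{t:resolvent} yields the a priori estimate
\[
\|u\|_{\Hcal^s_h}\leq Ch^{-1}\|P_h(z)u\|_{\Hcal^s_h}+\Ocal(h^\infty)\|u\|_{H_h^{-N}}
\]
uniformly for $z\in[1-h,1+h]+i[-\beta h,1]$. Applying the same argument to the adjoint $P_h(z)^*$ (equivalently, to the flow of $-X$) promotes this to invertibility with $\|P_h(z)^{-1}\|_{\Hcal^s_h\to\Hcal^s_h}\leq C/h$ by a standard Fredholm/duality argument.

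The statement on finitely many resonances is then immediate by rescaling: invertibility of $P_h(z)$ on $[1-h,1+h]+i[-\beta h,1]$ translates into the absence of resonances of $P$ in $[h^{-1}-1,h^{-1}+1]+i[-\beta,h^{-1}]$, and letting $h$ range over $(0,h_0)$ confines $\Res(P)\cap\{\Im\lambda>-\beta\}$ to a bounded region; meromorphicity of the restricted resolvent then forces this set to be finite. The main technical point to verify is that every bicharacteristic entering a small neighborhood of $p^{-1}(1)$ genuinely escapes $\overline{\Ucal}$ in finite time, so that the propagation estimate closes without the aid of $W$. This in turn rests on the fact that such bicharacteristics stay uniformly away from $T^*_{x_0}\Mcal$ (where $p\equiv 0$) and therefore cannot accumulate at the only candidate trapped point $x_0$, combined with the convexity property~\eqref{c:convexity-flow} of $\Ucal$.
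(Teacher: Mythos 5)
Your proposal is correct and follows essentially the same route as the paper: the paper's own argument is exactly the observation that $p^{-1}(1)\cap T^*_{x_0}\Mcal=\varnothing$ (since $X(x_0)=0$) so the energy shell $\{p=1\}$ carries no trapping and the absorbing potential $W$ of Theorem~\ref{t:resolvent} can be dropped, after which the radial and propagation estimates close directly. You have simply spelled out the details of the escape mechanism and the Fredholm step that the paper compresses into the sentence ``The proof in \cite{open} and \S\ref{s:upper} goes through.''
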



\subsection{Proof of the main theorems}
Now we show that the resonances for an Axiom A flow are exactly the union of the resonances for each open hyperbolic system and thus prove the main theorems. To be more precise,
let $\varphi^t=\exp(tX)$ be an Axiom A flow on $M$ satisfying the transversality condition (T),  $P=-iX$ and $R(\lambda)=(P-\lambda)^{-1}$ be the resolvent. The spectral decomposition is given by \eqref{e:axioma-decomposition}. For each basic hyperbolic set $K_j$, fix an open hyperbolic system $(\overline{\mathcal{U}}_j,X_j)$ as in Lemma \ref{l:basic-to-open}, extended to $M$ as in Section \ref{s:open} and this gives the resolvent $R_j(\lambda)=(P_j-\lambda)^{-1}$ where $P_j=-iX_j$. Then we can choose cutoff function $\chi_j\in C_c^\infty(\mathcal{U}_j)$ supported in a sufficiently small neighbourhood of $K_j$ and equals 1 near $K_j$ such that
\begin{align}
\label{axioma=open_1}
    \chi_j R(\lambda)\chi_j =\chi_j R_j(\lambda)\chi_j
\end{align}
where $R_j(\lambda)$ is the resolvent for the open system $(\overline{\Ucal}_j,X_j)$. We can also assume that for $j\neq j'$, $\supp\chi_j\cap\supp\chi_{j'}=\varnothing$. We define the corresponding spaces of (generalized) resonances states:
$$\Res_X(\lambda):=\ker(P-\lambda)^{m_X(\lambda)}=\range\Pi_\lambda$$
where $\lambda\in\Res(P)$ and $m_X(\lambda)$ is the multiplicity of $\lambda$, given by
$m_X(\lambda)=\rank\Pi_\lambda$
where
$$\Pi_\lambda=\frac{1}{2\pi i}\oint_\lambda R(z)dz:C^\infty(M)\to\mathcal{D}'(M)$$
is the projection onto the space of generalized resonance states. Here the integral is taken over a sufficiently small positively oriented circle around $\lambda$ that does not enclose any other resonances. Similarly, 
$$\Res_{X_j}(\lambda):=\ker(P_j-\lambda)^{m_{X_j}(\lambda)}=\range(\Pi_{j,\lambda})$$
where $\lambda\in\Res(P_j)$ and $m_{X_j}(\lambda)$ is the multiplicity of $\lambda$, given by
$m_{X_j}(\lambda)=\rank\Pi_{j,\lambda}$
where
$$\Pi_{j,\lambda}=\frac{1}{2\pi i}\oint_\lambda R_j(z)dz:C^\infty(\Ucal_j)\to\mathcal{D}'(\Ucal_j).$$

As in \cite[\S 2.5.2]{meddane}, we fix a total order relation on the family of basic sets so that $K_i\leq K_j$ implies $i\leq j$. 

\begin{prop}
\label{p:res-axioma}
We have an isomorphism between generalized resonant states
\begin{align}
    \Res_X(\lambda)\cong \bigoplus\limits_{j=1}^N \Res_{X_j}(\lambda),
\end{align}
where the isomorphism is defined by 
\begin{align}
\label{e:axiomatoopen}
    u\mapsto (u_j:=\Pi_{j,\lambda}\chi_ju).
\end{align}
\end{prop}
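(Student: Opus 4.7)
The plan is to check that the map $\Phi: u \mapsto (\Pi_{j,\lambda}\chi_j u)_{j=1}^N$ is well-defined, injective, and surjective, exploiting both the identity \eqref{axioma=open_1} and the spectral decomposition \eqref{e:axioma-decomposition}.

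First, for well-definedness, I would verify that for $u \in \Res_X(\lambda)$ the distribution $\chi_j u$ lies in the anisotropic Sobolev space on which $\Pi_{j,\lambda}$ is defined, so that $\Pi_{j,\lambda}\chi_j u$ is a sensible element of $\Res_{X_j}(\lambda)=\range(\Pi_{j,\lambda})$. Generalized resonant states for $X$ have wavefront set contained in the dual unstable bundle $E_+^*$ and support in $\Gamma_+=\overline{\bigcup_j W^u(K_j)}$. Because $X=X_j$ on a neighborhood of $\supp\chi_j$, the stable/unstable splittings and the microlocal weight for the open hyperbolic system $(\overline{\Ucal}_j,X_j)$ match those for $X$ microlocally on $\supp\chi_j$, so $\chi_j u$ fits into the Dyatlov--Guillarmou framework for $X_j$ and $\Pi_{j,\lambda}\chi_j u\in\Res_{X_j}(\lambda)$.

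Second, for injectivity I would argue by reverse induction on the total order on basic sets: by the choice of ordering refining Smale's partial order, the topmost $K_N$ has the property that $W^u(K_N)$ does not accumulate on any $K_{j'}$ with $j'>N$. Suppose $\Pi_{j,\lambda}\chi_j u=0$ for all $j$. Working near $K_N$, the identity \eqref{axioma=open_1} and the fact that $\chi_N u$ is microlocally a generalized resonant state for $X_N$ up to an element whose $\Pi_{N,\lambda}$-image is controlled imply that $\chi_N u$ lies in the range of $(I-\Pi_{N,\lambda})$, and, being also supported on $\Gamma_+\cap\nbd(K_N)\subset W^u(K_N)$, must vanish on a neighborhood of $K_N$. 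Thus $\supp u\subset\overline{\bigcup_{j<N}W^u(K_j)}$, and one iterates the argument with $K_{N-1},K_{N-2},\dots,K_1$, eventually forcing $u=0$.

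For surjectivity, given $(v_j)\in\bigoplus_j\Res_{X_j}(\lambda)$, I would construct $u\in\Res_X(\lambda)$ as follows. Each $v_j$ is a generalized resonant state for $X_j$ concentrated along $W^u(K_j)\cap\Ucal_j$. Using \eqref{axioma=open_1} in the form $(P-\lambda)^{-1}\chi_j v_j=(P_j-\lambda)^{-1}\chi_j v_j$ near $K_j$ and applying the global projector via $u:=\sum_j \Pi_\lambda(\chi_j'v_j)$ for slightly larger cutoffs $\chi_j'$ with $\chi_j'\chi_j=\chi_j$, one obtains an element of $\Res_X(\lambda)$. Computing $\Phi(u)$ and using the disjointness $\supp\chi_j\cap\supp\chi_{j'}=\varnothing$ for $j\neq j'$ together with the same resolvent identity, the off-diagonal contributions are zero and one recovers $(v_j)$. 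The main obstacle, and what requires the strong transversality condition (T), is controlling the microlocal structure of $u$ across the interfaces between different basic sets: one must show that the projectors $\Pi_\lambda$ and $\Pi_{j,\lambda}$ intertwine through the cutoffs modulo operators annihilating resonant states, and that the anisotropic weights for $X$ and $X_j$ are globally compatible on $\bigcup_j W^u(K_j)$, which is exactly what (T) secures through Meddane's construction.
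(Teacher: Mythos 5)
Your well-definedness step is essentially what the paper does (WF of $u$ in $E_u^*$ plus the wavefront property of $\Pi_{j,\lambda}$), but both your injectivity and surjectivity arguments diverge from the paper's proof and contain genuine gaps.

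For injectivity, the paper does not induct from $K_N$ downward. It fixes the \emph{minimal} $j$ with $\supp u\cap K_j\neq\varnothing$, constructs an auxiliary cutoff $\tilde\chi_j$ that avoids $\bigcup_{l>j}W^u(K_l)$ and lies inside a pre-compressed unrevisited neighbourhood of $K_j$, shows that $\tilde\chi_j\Pi_\lambda(1-\chi_j)u=0$ by a direct support/flow argument, and derives a contradiction in one step. Your argument instead starts at the top and asserts that $\Pi_{N,\lambda}\chi_N u=0$, i.e.\ $\chi_N u\in\range(I-\Pi_{N,\lambda})$, together with $\supp(\chi_N u)\subset\Gamma_+$, forces $\chi_N u$ to vanish near $K_N$. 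That implication is not justified and is in fact delicate: near $K_N$ the support of $u$ can meet $W^u(K_l)$ for other $l$ via heteroclinic connections, so $\chi_N u$ need not solve the homogeneous equation $(P_N-\lambda)^{m}v=0$, and membership in the complement of a finite-dimensional spectral subspace carries no support information. Something like the paper's choice of a \emph{minimal} intersecting basic set (so that the only unstable manifold that can meet $\supp\tilde\chi_j\cap\supp u$ is $W^u(K_j)$ itself) together with the unrevisited-neighbourhood trick is exactly what makes the vanishing conclusion available.

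For surjectivity, the paper does not construct preimages at all: it proves $\dim\Res_X(\lambda)=\sum_j\dim\Res_{X_j}(\lambda)$ by a flat trace computation, using $\Tr^\flat(\chi_j\Pi_\lambda\chi_{j'})=0$ for $j\neq j'$ (disjoint supports), $\chi\Pi_\lambda\chi=0$ for a cutoff away from the basic sets, and the support of the Schwartz kernel of $\Pi_{j,\lambda}$ inside $\Gamma_{+,j}\times\Gamma_{-,j}$ to reduce $\Tr(\chi_j\Pi_{j,\lambda}\chi_j)$ to $\Tr(\Pi_{j,\lambda})$. Your proposed $u=\sum_j\Pi_\lambda(\chi_j'v_j)$ with a claimed off-diagonal cancellation ``since $\supp\chi_j\cap\supp\chi_{j'}=\varnothing$'' does not go through: the Schwartz kernel of $\Pi_\lambda$ lives on $\Gamma_+\times\Gamma_-$, which is far from the diagonal, so $\Pi_{j,\lambda}\chi_j\Pi_\lambda\chi_{j'}'v_{j'}$ has no reason to vanish for $j\neq j'$. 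This is precisely what the trace argument is designed to sidestep (flat traces only see the diagonal, so off-diagonal pieces drop out there), and is why the paper proves dimension equality rather than exhibiting an inverse map.

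Finally, a smaller point: the role of the transversality condition (T) is not, as you suggest, to intertwine projectors across interfaces or to match anisotropic weights globally in the surjectivity step; it enters earlier, in Meddane's construction of the global weight function and the unrevisited neighbourhoods (Lemma~\ref{l:unrevisited}) used in the injectivity argument and in establishing \eqref{axioma=open_1}.
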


\begin{proof}
Let $E_u^*$ be the unstable bundle on $M$ as in \cite[Definition 2.3]{meddane}. By the construction of the weight function in \cite[Proposition 3.4]{meddane}, we may conclude $\WF(u)\subset E_u^*$ for any $u\in \Res_X(\lambda)$. By the wavefront condition \cite[(0.14)]{open}, the map \eqref{e:axiomatoopen} is well-defined.

Now we prove \eqref{e:axiomatoopen} is injective. Suppose $0\neq u\in \Res_X(\lambda)$ maps to zero in \eqref{e:axiomatoopen}. 
Let $K_j$ be the minimal basic set with $\supp u\cap K_j\neq\varnothing$. Since $u_j=0$, by \eqref{axioma=open_1},
\begin{equation}
\label{e:projection-chichi}
    \chi_j\Pi_\lambda\chi_j u=\chi_j\Pi_{j,\lambda}\chi_j u =\chi_j u_j=0.
\end{equation}
Let $\tilde{\chi}_j\in C_c^\infty(\Ucal_j)$ such that 
\begin{itemize}
    \item[(a)] $\tilde{\chi}_j=1$ near $K_j$ and $\supp\tilde{\chi}_j\subset \mathcal{W} $ where $\mathcal{W}$ is a small neighborhood of $K_j$ such that $\varphi^t(\mathcal{W})\subset\mathcal{W}'$ for $t\in [-1,1]$, and $\mathcal{W}'$ is an unrevisited neighbourhood of $K_j$ such that $\mathcal{W}'\cap \supp (1-\chi_j)=\varnothing$.
    \item[(b)] $\supp (\tilde{\chi}_j)\cap \bigcup\limits_{l> j} W^{u}(K_l)=\varnothing $.
\end{itemize}
We claim that 
\begin{equation}
\label{e:projection-chi}
\tilde{\chi}_j\Pi_\lambda(1-\chi_j)u=0.
\end{equation}
It suffices to show $\tilde{\chi}_j \varphi^{-t*}(1-\chi_j)u=0$ for $t\geq 0$, then we can integrate over $t\in[0,\infty)$ to obtain $\tilde{\chi}_jR(\lambda)(1-\chi_j)u=0$ for $\Im\lambda\gg1$, and thus for all $\lambda\in\mathbb{C}$ by analytic continuation.

If $x\in \supp (\tilde{\chi}_j \varphi^{-t*}(1-\chi_j)u)$, then $\varphi^{-t}(x)\in \supp u$. There exists $\ell\in\{1,\ldots,N\}$ such that $\varphi^{-T}(x)\to K_l$ as $T\to\infty$. We must have $l\geq j$ since $(X+\lambda)^{m_X(\lambda)}u=0$ gives $\supp u\cap K_\ell\neq\varnothing$. In addition, since $x\in\supp\tilde{\chi}_j$, we must have $l=j$ by (b). But then by (a), we see $x\in \supp \tilde{\chi}_j\subset \mathcal{W}$ and $\varphi^{-T}(x)\in \mathcal{W}$ for any large enough $T>0$, then $\varphi^{-t}(x)\in \mathcal{W}'$ for any $t\geq0$, and in particular $\varphi^{-t}(x)\not\in\supp (1-\chi_j)$ for any $t\geq0$. Thus $\tilde{\chi}_j \varphi^{-t*}(1-\chi_j)u=0$ for $t\geq 0$ and \eqref{e:projection-chi} follows. From \eqref{e:projection-chichi} and \eqref{e:projection-chi}, and by (a), $\tilde{\chi_j}=\tilde{\chi}_j\chi_j$, we can then conclude 
\begin{align*}
    \tilde{\chi}_j u=\tilde{\chi}_j\Pi_\lambda u=\tilde{\chi}_j\Pi_\lambda(1-\chi_j)u+\tilde{\chi}_j\Pi_\lambda \chi_j u =0, 
\end{align*}
contradictory to the assumption that $\supp u\cap K_j\neq\varnothing$. Thus we conclude \eqref{e:axiomatoopen} is injective.

Finally we prove that \eqref{e:axiomatoopen} is a bijection. We only need to show both sides have the same dimension. Let us fix a partition of unity 
\begin{align*}
    1=\chi+\sum\limits_j \chi_j
\end{align*}
where $\chi$ is supported outside all basic sets and $\chi_j\in C_c^\infty(\Ucal_j)$.
Then we have
\begin{align*}
    \Pi_\lambda=\chi\Pi_\lambda\chi+\sum\limits_j \chi_j\Pi_\lambda\chi_j+\sum\limits_{j\neq j'}\chi_{j}\Pi_\lambda\chi_{j'}+\sum\limits_{j}(\chi \Pi_\lambda\chi_j  +\chi_j\Pi_\lambda\chi)
\end{align*}
Here
\begin{itemize}
\item $\chi \Pi_\lambda\chi=0$: since $\supp\chi\cap\bigcup_jK_j=\varnothing$, there exists $T_0>0$ such that for $t>T_0$, $\chi\varphi^{-t}\chi=0$ and thus
$$\chi R(\lambda)\chi=i\int_0^T\chi e^{it\lambda}\varphi^{-t}\chi dt$$
is holomorphic on $\mathbb{C}$.
\item $\Tr^\flat(\chi_j\Pi_\lambda\chi_{j'})=0$ since $\supp\chi_j\cap\supp\chi_{j'}=\varnothing$.
\item For the last term, we choose $\tilde{\chi}_j\in C_c^\infty(\mathcal{U}_j)$ such that $\tilde{\chi}_j=1$ near $K_j$ and $\supp\chi\cap\supp\tilde{\chi}_j=\varnothing$. Since $\supp\chi_j(1-\tilde{\chi}_j)$ does not intersect any basic set, the same argument as above shows that 
$$\chi\Pi_\lambda\chi_j(1-\tilde{\chi}_j)=0=\chi_j(1-\tilde{\chi}_j)\Pi_\lambda\chi.$$
We also have $\Tr^\flat\chi\Pi_\lambda\chi_j\tilde{\chi}_j=\Tr^\flat\chi_j\tilde{\chi}_j\Pi_\lambda\chi=0$ since $\supp\chi\cap\supp\chi_j\tilde{\chi}_j=\varnothing$.
\end{itemize}
We conclude that
\begin{align*}
    \dim\Res_X(\lambda)=\Tr(\Pi_\lambda)=\sum\limits_j \Tr(\chi_j\Pi_{j,\lambda}\chi_j).
\end{align*}
Here by \cite[(0.14)]{open}, the Schwartz kernel of $\Pi_{j,\lambda}$ is supported inside $\Gamma_{+,j}\times\Gamma_{-,j}$ where $\Gamma_{\pm,j}$ are the outgoing/incoming tail for $(\overline{\Ucal}_j,X_j)$. Since $K_j=\Gamma_{+,j}\cap\Gamma_{-,j}$, we see that
$$\Tr(\chi_j\Pi_{j,\lambda}\chi_j)=\Tr^\flat(\chi_j\Pi_{j,\lambda}\chi_j)=\Tr^\flat(\Pi_{j,\lambda})=\Tr(\Pi_{j,\lambda})=\dim\Res_{X_j}(\lambda)$$
and this concludes the proof. 
\end{proof}

\begin{rem}
Meddane gives an independent proof of Proposition \ref{p:res-axioma} in his PhD thesis \cite{meddanethesis}.
\end{rem}

\section{Further discussions}
\label{s:example}

\subsection{Generalization to actions on vector bundles}
\label{s:bundle}

In \cite{open}, the authors also studied the case of vector bundles. Consequently, our Theorem \ref{t:upper} and Theorem \ref{t:localtrace} works in the case of vector bundles with minor modifications of the proof. But Theorem \ref{t:lower} may not be true for vector bundles, since our proof relies on the positivity of the geometric side of the trace formula, which generally fails for vector bundles. In fact, \cite{jezequel} constructed an example of an Axiom A diffeomorphism with a complex weight such that there is no resonance.

Let us state the setup.
\begin{defi}
Let $(\Ucal,X)$ be an open hyperbolic system as in Section \ref{s:open}. Suppose there is a vector bundle $\Ecal$ over $\overline{\Ucal}$ and a first order differential operator $\mathbf X$ such that
\begin{align}\label{vb_X_lift}
    \Xbf (f\ubf)= (Xf)\ubf+f(\Xbf\ubf),\quad f\in C^\infty(\overline{\Ucal}), \ubf\in C^\infty(\overline{\Ucal},
    \Ecal),
\end{align}
then we call such a triple $(\Ucal,\Ecal,\Xbf)$ an open hyperbolic system with vector bundle $\Ecal$.
\end{defi}
One fixes an arbitrary extension of $\Ecal$ and $\Xbf$ to $\Mcal$ so that \eqref{vb_X_lift} holds. \cite{open} proved 
\begin{align}
    \mathbbm{1}_\Ucal(\Xbf+\lambda)^{-1}\mathbbm{1}_\Ucal:C_c^\infty(\Ucal;\Ecal)\to \Dcal'(\Ucal;\Ecal),\quad\Re\lambda\gg 1
\end{align}
admits a meromorphic continuation to $\lambda\in\CC$. Our proof in Section \ref{s:upper} still works with pseudodifferential operators on $\Ecal$. So we have for any constant $\beta>0$, 
\begin{align}
    \# \Res(-i\Xbf)\cap\{\lambda\in\mathbb{C}\,:\, |\Re\lambda-E|\leq 1, {\rm Im}\, \lambda\geq -\beta\}= \mathcal{O}(E^n).
\end{align}

Moreover, let the parallel transport
\[ \alpha_{x,t}:\Ecal(x)\to \Ecal(\varphi^t(x))\]
be defined by $\alpha_{x,t}(\ubf(x))=e^{-t\Xbf}\ubf(\varphi^t(x))$. As explained in \cite[Section 4.1]{open}, $\alpha_{x,t}$ only depends on the value of $\ubf$ at $x$ and is thus well-defined. For a closed trajectory $\gamma$ (not necessarily primitive), we denote
$\Tr\alpha_\gamma:=\Tr \alpha_{\gamma(t),T}$
where $T$ is the period of $\gamma$. Note $\Tr\alpha_\gamma$ is well-defined since $\Tr \alpha_{\gamma(t),T}$ is independent of $t$.

We recall the Atiyah--Bott--Guillemin trace formula from \cite[(4.6)]{open}, which states
\begin{align}
    \Tr^\flat\int_0^\infty \psi(t)\chi e^{-t\Xbf}\chi dt = \sum\limits_\gamma \frac{\psi(T_\gamma)T_\gamma^\#\Tr\alpha_\gamma}{|\det(I-\Pcal_\gamma)|},\quad\psi\in C_c^\infty(0,\infty),
\end{align}
where $\chi\in C_c^\infty(\Ucal)$ is any function such that $\chi=1$ near $K$.
The proof in Section \ref{s:lower} except Subsection \ref{s:weaklowerbound} still works and gives for any $A>0$,
\begin{align}\label{vb_localtrace}
    \sum\limits_{\mu\in\Res(P),\Im(\mu)>-A}e^{-i\mu t}+F_A(t)=\sum\limits_{\gamma }\frac{T_\gamma^\#\delta(t-T_\gamma)\Tr\alpha_\gamma}{|\det(I-\mathcal{P}_\gamma)|},\quad t>0
\end{align}
in $\mathcal{D}'((0,\infty))$, where the sum on the right hand side is taken over all closed orbits, $\mathcal{P}_\gamma$ is the Poincar\'e map, and $F_A\in \Scal'(\RR)$ is a distribution supported in $[0,\infty)$ such that
\begin{align*}
    |\widehat{F}_A(\lambda)|=\Ocal_{A,\epsilon}(\langle \lambda\rangle^{2n+1+\epsilon}),\quad \Im \lambda<A-\epsilon
\end{align*}
for any $\epsilon>0$. However, since $\Tr\alpha_\gamma$ may not be positive, we cannot conclude a lower bound as in Theorem \ref{t:lower} from \eqref{vb_localtrace}.

\subsection{Suspensions of Axiom A maps}
As in \cite[Appendix B]{local} by Naud, we have the following corollary about the resonances for suspension of Axiom A maps.

\begin{prop}\label{p:maptoflow}
Suppose we have an Axiom A map $\varphi:M\to M$ with a lift to a smooth vector bundle $E\to M$, i.e. there is a lift $\tilde{\varphi}:E\to E$ linear on the fiber such that the following diagram commutes.
\begin{align*}
    \xymatrix{E\ar[r]^{\tilde{\varphi}}\ar[d]&E\ar[d]\\
              M\ar[r]^{\varphi}&M}
\end{align*}
 We can naturally define an Axiom A flow system on the suspension $\Ecal\to \Mcal$ (see \S \ref{s:bundle}) where
\begin{align*}
    \Ecal=E\times[0,1]/(e,0)\sim(\tilde{\varphi}(e),1),\quad \Mcal=M\times[0,1]/(m,0)\sim (\varphi(m),1).
\end{align*}
The flow $\varphi^t:\Mcal\to \Mcal$ and lift $\tilde{\varphi}^t:\Ecal\to\Ecal$ are defined by translations.

Let $e^{-i\lambda_k},k\in\mathbb{N}$ be the resonances of $\tilde{\varphi}$. Then the suspension flow $\tilde{\varphi}^t$ has resonances given by
\begin{align*}
    \bigcup_{k\in\mathbb{N}}\lambda_k + 2\pi \ZZ.
\end{align*}
\end{prop}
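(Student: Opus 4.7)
The plan is to use the local trace formula \eqref{vb_localtrace} for the suspension flow to reduce the identification of its resonances to the spectral theory of the transfer operator of $\tilde{\varphi}$. The suspension flow $\varphi^t$ on $\Mcal$ is Axiom A (with the strong transversality condition inherited from the hyperbolicity of $\varphi$) and $\tilde{\varphi}^t$ is its natural lift to the vector bundle $\Ecal\to\Mcal$, so the analysis of Section \ref{s:axioma} and Section \ref{s:bundle} applies.

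The first step is to translate the closed-orbit side of \eqref{vb_localtrace} into a sum over periodic points of the map. Every closed orbit of $\varphi^t$ has integer period: a periodic point $x$ of $\varphi$ of primitive period $k$ yields a primitive flow orbit through $(x,0)$ of period $k$, and the closed orbit $\gamma$ obtained by traversing this primitive orbit $m$ times satisfies $T_\gamma=mk$, $T_\gamma^\#=k$, Poincar\'e map $\Pcal_\gamma=d\varphi^{mk}(x)$ on $T_xM$, and parallel transport $\alpha_\gamma=\tilde{\varphi}^{mk}_x\colon E_x\to E_x$. Since each primitive flow orbit of period $k$ contains $k$ distinct periodic points of $\varphi$ at which the quantities $\Tr\alpha_\gamma$ and $|\det(I-\Pcal_\gamma)|$ agree, grouping closed orbits by $n=mk$ gives
\begin{equation*}
\sum_\gamma\frac{T_\gamma^\#\,\Tr\alpha_\gamma\,\delta(t-T_\gamma)}{|\det(I-\Pcal_\gamma)|}
=\sum_{n\geq1}\delta(t-n)\sum_{\varphi^n(x)=x}\frac{\Tr\tilde{\varphi}^n_x}{|\det(I-d\varphi^n(x))|}
=\sum_{n\geq1}\delta(t-n)\,\Tr^\flat(\tilde{\varphi}^n).
\end{equation*}

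Combining with \eqref{vb_localtrace}, we obtain the distributional identity on $(0,\infty)$
\begin{equation*}
\sum_{\mu\in\Res(-i\Xbf),\,\Im\mu>-A}e^{-i\mu t}+F_A(t)=\sum_{n\geq1}\Tr^\flat(\tilde{\varphi}^n)\,\delta(t-n).
\end{equation*}
By the Ruelle--Fried theory for Axiom A maps with a vector bundle lift, the dynamical Fredholm determinant
\begin{equation*}
d(z)=\exp\!\left(-\sum_{n\geq1}\frac{z^n}{n}\Tr^\flat(\tilde{\varphi}^n)\right)
\end{equation*}
extends to an entire function whose zeros, counted with multiplicity, are precisely $\{e^{i\lambda_k}\}_k$. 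Substituting $z=e^{i\lambda}$ and taking Fourier--Laplace transforms for $\Im\lambda\gg 1$, the identity above rearranges into an equality between $-d/(id\lambda)\log d(e^{i\lambda})$ and a meromorphic function whose simple poles are supported on $\Res(-i\Xbf)$. Since the zeros of $d$ at $e^{i\lambda_k}$ produce, via the $2\pi$-periodicity of $\lambda\mapsto e^{i\lambda}$, zeros of $d(e^{i\lambda})$ precisely at $\lambda\in\lambda_k+2\pi\ZZ$, matching singularities yields the desired equality $\Res(-i\Xbf)=\bigcup_k(\lambda_k+2\pi\ZZ)$ with multiplicities.

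The main obstacle is matching multiplicities on both sides without spurious cancellations. This requires two nontrivial inputs: first, the extension (to the vector bundle setting) of the result of \cite{axioma} that the dynamical zeta function of an Axiom A flow has zero/pole divisor equal to the resonance divisor of $-i\Xbf$; second, the classical Ruelle--Fried theorem identifying zeros of $d(z)$ with resonances of the transfer operator of $\tilde{\varphi}$ with multiplicity. Given both, the manipulation above is essentially formal, though one must justify convergence and analytic continuation using the upper bound from Theorem \ref{t:upper} and the estimate on $\widehat{F}_A$ from Theorem \ref{t:localtrace}, both applied to the suspension flow.
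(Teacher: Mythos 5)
Your argument is correct in spirit and arrives at the same identity, but it is heavier than the paper's proof in two places, one of which is a genuine misidentification of what is actually needed.

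The paper's route goes: (i) rewrite the closed-orbit side of \eqref{vb_localtrace} as $\sum_{n\geq1}\delta(t-n)\Tr^\flat(\tilde\varphi^n)$, exactly as you do; (ii) invoke J\'ez\'equel's \emph{local trace formula for Axiom A maps} \eqref{map_trace}, which packages the transfer-operator resonance data as $\Tr^\flat(\tilde\varphi^p)=\sum_{\Im\lambda_k>-A}e^{-ip\lambda_k}+\Ocal(e^{-Ap})$ with an explicit tail bound; (iii) apply the Poisson summation formula to $\sum_p e^{-ip\lambda_k}\delta(t-p)$ to produce the Dirac-comb side $\sum_{j\in\ZZ}e^{-i(\lambda_k+2\pi j)t}$ together with a remainder $\tilde F_A$ whose Fourier transform is $\Ocal(1)$ in $\{\Im\lambda<A-\epsilon\}$; and then (iv) compare the two distributional identities directly, using the bounds on $\hat F_A$ and $\hat{\tilde F}_A$ to match poles. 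No zeta-function or Fredholm-determinant machinery is invoked on the flow side.

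You replace (ii)--(iii) by introducing the dynamical Fredholm determinant $d(z)$ and substituting $z=e^{i\lambda}$. This is logically equivalent (the map local trace formula \emph{is} the distilled consequence of the Ruelle--Fried determinant theory, and the $2\pi$-periodicity of $e^{i\lambda}$ plays the role of Poisson summation), but it trades a cited theorem for a reconstruction of it, and it obscures the polynomial control of the error term that makes the singularity-matching rigorous. More importantly, you list as a needed input \textit{``the extension to the vector bundle setting of the result of \cite{axioma} that the dynamical zeta function of an Axiom A flow has zero/pole divisor equal to the resonance divisor.''} This is not needed: the flow side of \eqref{vb_localtrace} already \emph{is} the resonance data (it was established in Section~\ref{s:bundle}), so there is no zeta function of the flow to pass through. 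The only external input on the map side is J\'ez\'equel's local trace formula for Axiom A maps, \cite[Theorem~2.8]{jezequel}, and on the flow side it is the local trace formula \eqref{vb_localtrace} together with the bound $|\hat F_A|=\Ocal(\langle\lambda\rangle^{2n+1+\epsilon})$, both already proved. Streamlining your argument to use \eqref{map_trace} directly, plus Poisson summation, removes the extraneous dependence and matches the paper's proof.
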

\begin{proof}
This comes from comparing the trace formula for Axiom A maps and Axiom A flows.
The local trace formula for Axiom A maps (see e.g. \cite[Theorem 2.8]{jezequel}) says
\begin{align}
\label{map_trace}
    \sum\limits_{\varphi^p(x)=x}\frac{\Tr\alpha_{x,p}}{|\det(I-d(\varphi^{-p}))|}=\sum\limits_{\Im\lambda_k>-A}e^{-ip\lambda_k}+\Ocal(e^{-A p}).
\end{align}
The local trace formula for Axiom A flows \eqref{vb_localtrace} says that
\begin{align}
\label{flow_trace}
    \sum\limits_{\gamma }\frac{T_\gamma^\#\delta(t-T_\gamma)\Tr\alpha_{\gamma}}{|\det(I-\mathcal{P}_\gamma)|}=\sum\limits_{\mu\in\Res(P),\Im(\mu)>-A}e^{-i\mu t}+F_A(t).
\end{align}
By Poisson summation formula, as a distribution of $t\in (0,\infty)$,
$$\sum_{p=1}^\infty e^{-ip\lambda_k}\delta(t-p)
=e^{-it\lambda_k}\sum_{p=1}^\infty\delta(t-p)
=e^{-it\lambda_k}\sum_{j=-\infty}^\infty e^{-i2\pi jt}.$$
Take the sum from $p=1$ to $\infty$ in \eqref{map_trace}, we get as a distribution of $t\in (0,\infty)$,
\begin{align*}
     \sum\limits_{\gamma }\frac{T_\gamma^\#\delta(t-T_\gamma)\Tr\alpha_{\gamma}}{|\det(I-\mathcal{P}_\gamma)|}&=\sum\limits_{p=1}^\infty \left(\sum\limits_{\Im\lambda_k>-A}e^{-ip\lambda_k}+\Ocal(e^{-A p})\right)\delta(t-p)\\
     &= \sum\limits_{\Im\lambda_k>-A}\sum\limits_{j=-\infty}^\infty e^{-i(\lambda_k+2\pi j) t}+\tilde{F}_A(t)
\end{align*}
where
\begin{align*}
    \Fcal(\tilde{F}_A)(\lambda)=\Ocal_{A,\epsilon}(1),\quad \Im\lambda<A-\epsilon
\end{align*}
for any $\epsilon>0$. Compare it with \eqref{flow_trace}, we get
$\Res(P)=\{\lambda_k+2\pi j:j\in\ZZ\}$.
\end{proof}

\begin{rem}
For the suspension flow, our lower bound \eqref{e:axioma-lower} is almost optimal: 
\begin{equation}
\# \Res(P)\cap\{\lambda\in\mathbb{C}\,:\, |\lambda|<E, \Im\lambda>-\beta\}=c_\beta E+\mathcal{O}(1)
\end{equation}
with $c_\beta>0$ when $\beta$ is large enough. Therefore we cannot take $\delta>1$ in \eqref{e:axioma-lower}. We conjecture that \eqref{e:axioma-lower} actually holds for $\delta=1$.
\end{rem}

A particular example is an Axiom A map $\varphi:M\to M$ with a smooth weight function $g:M\to \CC$. Let $E=M\times \CC$ be the trivial bundle and the lift be 
\begin{align*}
    \tilde{\varphi}(m,e)=(\varphi(m),g(m)e):E\to E.
\end{align*}

As an application, we apply Proposition \ref{p:maptoflow} to the following example of Axiom A maps given in
\cite{jezequel}.
\begin{prop}
\label{p:jezequel}
There exists a smooth diffeomorphism $T$ of $S^4$, a hyperbolic basic set $K$ of $T$, and a smooth weight function $g:S^4\to \RR$ strcitly positive on $K$ and an ordering $\{e^{-i\lambda_k}\}$ of resonances with $\Im\lambda_1\geq \Im\lambda_2\geq \cdots$
such that for all $p\geq 1$,
$\sum\limits_{k=1}^\infty e^{-ip\lambda_k}$ does not converge.
\end{prop}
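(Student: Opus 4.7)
The plan is to extract this statement directly from the construction in \cite{jezequel}, since the main result there already provides an Axiom A diffeomorphism with a smooth real weight whose transfer operator has precisely the required pathological resonance distribution. The only task beyond citing \cite{jezequel} is to observe that the construction there can be realized on $S^4$ rather than on the ambient manifold used in that paper.

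First I would recall Jézéquel's construction: one starts with a Smale-type horseshoe carrying a hyperbolic basic set $K$, and then perturbs this in two stable/unstable directions to obtain an Axiom A diffeomorphism on a compact manifold of dimension four. Using a Markov coding of $K$, Jézéquel reduces the transfer operator $\mathcal{L}_g$ on a suitable anisotropic Banach space to (a perturbation of) a transfer operator on a subshift of finite type. The key point is to choose the smooth positive weight $g$ and the local geometry of $K$ so that the reduced operator has an explicit spectrum; in particular one arranges infinitely many eigenvalues $e^{-i\lambda_k}$ to lie in a fixed horizontal strip $\{-A<\Im\lambda<0\}$ and to accumulate on a circle of positive radius. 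For any ordering with $\Im\lambda_1\geq\Im\lambda_2\geq\cdots$ this immediately yields $|e^{-ip\lambda_k}|=e^{p\Im\lambda_k}\not\to 0$, so the series $\sum_k e^{-ip\lambda_k}$ fails to converge for every $p\geq 1$.

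Second, I would transplant this example from Jézéquel's ambient manifold to $S^4$ by a routine embedding-and-extension argument: fix an open ball $B\subset S^4$ containing a neighborhood of the horseshoe, realize $T$ inside $B$ as above, and interpolate smoothly to the identity in a collar of $\partial B$. Done carefully, this introduces no new recurrent dynamics away from $K$, so $K$ remains a hyperbolic basic set for the extended $T$ on $S^4$. The weight $g$, originally defined near $K$ and strictly positive there, extends smoothly to $S^4$ (positivity off $K$ is not required by the statement), and the transfer operator spectrum on any anisotropic space adapted to $K$ is unchanged.

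The main obstacle is of course the spectral analysis producing the slow-decay resonance sequence — this is the technical heart of \cite{jezequel} and relies on a delicate choice of weight adapted to the coded subshift, together with a careful perturbative spectral argument on the anisotropic Banach space. I would not reprove this; the proof proposal is rather to verify that Jézéquel's example satisfies all the hypotheses stated in the proposition (smoothness, realization on $S^4$, strict positivity of $g$ on $K$) and to record the simple consequence that the ordered sum diverges termwise.
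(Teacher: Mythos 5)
Your proposal is correct and matches the paper's treatment exactly: the paper gives no proof of this proposition, simply citing it from J\'ez\'equel \cite{jezequel}, and your plan likewise reduces to that citation plus the elementary observation that an infinite family of resonances $e^{-i\lambda_k}$ accumulating on a circle of positive radius forces $|e^{-ip\lambda_k}|=e^{p\Im\lambda_k}\not\to 0$ for every $p\geq 1$, so the ordered series diverges termwise. Your transplanting-to-$S^4$ step is superfluous, since J\'ez\'equel's example is already built on $S^4$, but it is harmless.
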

As a consequence, its suspension which is an Axiom A flow on a smooth $5$-dimension manifold, with a smooth weight function, the convergence of the sum
\begin{align*}
    \sum\limits_{\mu\in\Res(P)}e^{-i\mu t}
\end{align*}
in $\Dcal'((0,\infty))$ is not clear: if we write the sum as
$$\sum_{k=1}^\infty\sum_{j\in\ZZ}e^{-i(\lambda_k+2\pi j)t},$$
by Poisson summation formula, we get
$$\sum_{k=1}^\infty\sum_{p=1}^\infty e^{-i\lambda_kt}\delta(t-p)
=\sum_{k=1}^\infty\sum_{p=1}^\infty e^{-i\lambda_kp}\delta(t-p)$$
which is divergent by Proposition \ref{p:jezequel}. This suggests that a global version of the trace formula \eqref{e:localtrace}:
$$\sum\limits_{\mu\in\Res(P)}e^{-i\mu t}=\sum_{\gamma}\frac{T_\gamma^\#\delta(t-T_\gamma)}{|\det(I-\mathcal{P}_\gamma)|},\quad t>0$$
may not be well-defined for general $C^\infty$ manifolds if the left-hand side is summed in a wrong way. On the other hand, Jézéquel \cite{globaltrace} proves a global trace formula for ``ultra-differentiable'' Anosov flows which includes any Gevrey classes.

\subsection{Morse--Smale flows}
\label{s:morsesmale}
Dang--Rivi\`ere \cite{morsesmaleg, morsesmale1, morsesmale2} studied resonances for Morse--Smale flows. We use our trace formula to give a direct computation for the resonances here, suggested by Malo J\'{e}z\'{e}quel. This is also discussed in Meddane's PhD thesis \cite{meddanethesis}.

\begin{prop}
\label{p:res-morsesmale}
\begin{itemize}
    \item Suppose we have an open system $(\Ucal,\Ecal,\Xbf)$ with a single closed orbit $\gamma$ of period $T$. Let $\epsilon_\gamma=0$ if $E_s|_{\gamma}$ is orientable and $\epsilon_\gamma=1$ if $E_s|_{\gamma}$ is not orientable. Suppose $s=\rank E_s|_\gamma$ and $\mathcal{P}_\gamma$ has eigenvalues $e^{-\lambda_1},\ldots,e^{-\lambda_{d}}$ such that $\Re\lambda_1\leq \Re\lambda_2\leq\cdots\leq \Re\lambda_s<0<\Re\lambda_{s+1}\leq \cdots\leq \Re\lambda_d$ and $\alpha_\gamma$ has eigenvalues
$e^{-\mu_1},\ldots,e^{-\mu_r}$. Then the resonances (counted with multiplicity) are given by
\begin{equation}
\label{e:res_closedorbit}
\begin{split}
    \Res(P) = \Big\{(-i\mu_l + \sum\limits_{j=1}^s ik_j\lambda_j-\sum\limits_{j=s+1}^d ik_j\lambda_j&\;+2\pi n +\pi \epsilon_\gamma)/T: n\in\mathbb{Z}, l=1,\ldots,r\\
   &\; k_1,\ldots,k_s\geq 1,k_{s+1},\ldots, k_d\geq 0\Big\}.
   \end{split}
\end{equation}
\item  Suppose we have an open system $(\Ucal,\Ecal,\Xbf)$ with a single hyperbolic fixed point $x_0$. Suppose $d\varphi^t|_{x_0}=e^{tA}$ and $A$ has eigenvalues $\lambda_1,\ldots,\lambda_d$ with $\Re\lambda_1\leq \Re\lambda_2\leq\cdots\leq \Re\lambda_s<0<\Re\lambda_{s+1}\leq\cdots\leq\Re\lambda_d$ where $s=\dim E_s|_{x_0}$. Let $e^{-\mu_1 t},\ldots,e^{-\mu_r t}$ be the eigenvalues of $\alpha_{x_0,t}$, then the resonances (counted with multiplicity) are given by
\begin{equation}
\label{e:res_fixedpoint}
\begin{split}
    \Res(P)= \{i(-\mu_l+\sum_{j=1}^sk_j\lambda_j-\sum_{j=s+1}^dk_j\lambda_j):
    &\;k_1,\ldots,k_s\geq 1;k_{s+1},\ldots,k_d\geq 0\\
    &\;l=1,\ldots,r\}.
\end{split}
\end{equation}
\end{itemize}

\end{prop}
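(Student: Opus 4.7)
For Part 1 we apply the local trace formula \eqref{vb_localtrace}: since the only closed orbits are iterates $k\gamma$ with period $kT$, primitive period $T$, Poincar\'e map $\mathcal{P}_\gamma^k$, and monodromy trace $\Tr(\alpha_\gamma^k)=\sum_{l=1}^r e^{-k\mu_l}$, the geometric side becomes
\begin{equation*}
    \sum_{k=1}^\infty\frac{T\sum_l e^{-k\mu_l}}{|\det(I-\mathcal{P}_\gamma^k)|}\,\delta(t-kT).
\end{equation*}
The plan is to factor $1-e^{-k\lambda_j}=-e^{-k\lambda_j}(1-e^{k\lambda_j})$ for each stable index $j\leq s$ (so that $|e^{k\lambda_j}|<1$) and then expand $(1-e^{k\lambda_j})^{-1}$ and $(1-e^{-k\lambda_j})^{-1}$ (for $j>s$) as absolutely convergent geometric series; absorbing the prefactor $\prod_{j\leq s}e^{k\lambda_j}$ into a shift of the stable summation to $k_j\geq 1$ then yields
\begin{equation*}
    \frac{1}{|\det(I-\mathcal{P}_\gamma^k)|}=(-1)^{k\epsilon_\gamma}\sum_{\substack{k_1,\ldots,k_s\geq 1\\ k_{s+1},\ldots,k_d\geq 0}}\exp\Bigl(k\sum_{j\leq s}k_j\lambda_j-k\sum_{j>s}k_j\lambda_j\Bigr),
\end{equation*}
with the sign $(-1)^{k\epsilon_\gamma}=e^{ik\pi\epsilon_\gamma}$ coming from $\sgn\det(\mathcal{P}_\gamma^k|_{E_s})$: when $E_s|_\gamma$ is non-orientable the holonomy around $\gamma$ reverses orientation on $E_s$, so the sign of $\det(\mathcal{P}_\gamma^k|_{E_s})$ flips at each iteration, producing the extra $(-1)^k$. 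Writing the geometric side as $\sum_{k\geq 1}T\,e^{k\alpha_{l,\vec k}}\,\delta(t-kT)$ with $\alpha_{l,\vec k}=-\mu_l+i\pi\epsilon_\gamma+\sum_{j\leq s}k_j\lambda_j-\sum_{j>s}k_j\lambda_j$ and applying the Poisson identity $\sum_{k\geq 1}\delta(t-kT)=T^{-1}\sum_{n\in\ZZ}e^{2\pi int/T}$ in $\Dcal'((0,\infty))$ converts each summand to $\sum_n e^{t(\alpha_{l,\vec k}+2\pi in)/T}$; matching against $\sum_\mu e^{-i\mu t}$ on the spectral side (and re-indexing $n\mapsto-n$) reproduces \eqref{e:res_closedorbit}.

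For Part 2 there are no closed orbits, so the plan is to compute the resonances directly from the linearization of $\Xbf$ at $x_0$. In coordinates around $x_0$ diagonalizing $A$, the candidate generalized resonance states of $-i\Xbf$ are
\begin{equation*}
    u_{\vec k,l}(x)=\delta^{(k_1-1)}(x_1)\cdots\delta^{(k_s-1)}(x_s)\,x_{s+1}^{k_{s+1}}\cdots x_d^{k_d}\,\mathbf{v}_l,\quad k_1,\ldots,k_s\geq 1,\ k_{s+1},\ldots,k_d\geq 0,
\end{equation*}
where $\mathbf{v}_l$ is an eigenvector of the fiber endomorphism $\Xbf|_{\Ecal(x_0)}$ with eigenvalue $\mu_l$. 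The wavefront set of $u_{\vec k,l}$ is contained in $E_s^*=E_-^*$, which is compatible with the anisotropic Sobolev space determined by the weight of Proposition \ref{c:weight} in the fixed-point setup of Section \ref{s:fixedpoints}. Using $x\partial_x\delta^{(n)}=-(n+1)\delta^{(n)}$ and $x\partial_x x^n=nx^n$, a direct computation gives $\Xbf u_{\vec k,l}=(\mu_l-\sum_{j\leq s}k_j\lambda_j+\sum_{j>s}k_j\lambda_j)u_{\vec k,l}$, so $-i\Xbf u_{\vec k,l}=i(-\mu_l+\sum_{j\leq s}k_j\lambda_j-\sum_{j>s}k_j\lambda_j)u_{\vec k,l}$, realizing every value in \eqref{e:res_fixedpoint} as a resonance. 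Completeness---no other resonances exist---follows either from matching with the fixed-point analogue of the trace identity
\begin{equation*}
    \sum_\mu e^{-i\mu t}+F_A(t)=\frac{\Tr(\alpha_{x_0,t})}{|\det(I-e^{tA})|},\quad t>0,
\end{equation*}
(obtained from $\delta(x-\varphi^t(x))=|\det(I-d\varphi^t(x_0))|^{-1}\delta(x-x_0)$ near $x_0$, and expanded by the same multi-geometric series as in Part 1 but now in the continuous parameter $t$), or from a direct residue computation of $\dim\range\Pi_\lambda$ via the Jordan structure of $A$ acting on the span of the $u_{\vec k,l}$.

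The main obstacle lies in the sign bookkeeping for Part 1: one must show that $\sgn\det(\mathcal{P}_\gamma^k|_{E_s})$ acquires an extra $(-1)^k$ exactly when $E_s|_\gamma$ is non-orientable, and then faithfully transport this $(-1)^k$ through Poisson summation to produce the translate by $\pi$ in $\Re(\mu T)$ recorded as the $\pi\epsilon_\gamma$ term. A secondary technical point in Part 2 is justifying the smooth fixed-point trace identity above---which is not included verbatim in Theorem \ref{t:localtrace}---by a short flat-trace computation near $x_0$; this step can however be bypassed entirely by the direct eigenfunction argument, which already exhibits every generalized resonance state in closed form.
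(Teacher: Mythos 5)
Your Part 1 follows the paper's argument essentially verbatim: apply the trace formula \eqref{vb_localtrace}, identify the sign $|\det(I-\mathcal{P}_\gamma^n)|=(-1)^{s+n\epsilon_\gamma}\det(I-\mathcal{P}_\gamma^n)$ via orientability of $E_s|_\gamma$, expand the determinant as a multi-geometric series with the stable indices shifted to $k_j\geq 1$, and apply Poisson summation. This is correct.

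For Part 2 the paper computes the flat trace $\Tr^\flat(\chi e^{-t\Xbf}\chi)$ directly via $\delta(\varphi^{-t}(x)-x)$ and integrates over $t$ to read off the poles of the resolvent trace; your primary route (exhibiting explicit generalized resonance states $u_{\vec k,l}$) is genuinely different and has the appeal of showing the resonant states in closed form. But there are concrete problems in your version. First, your proposed fixed-point trace identity has the wrong sign in the determinant: the kernel of $e^{-t\Xbf}$ restricted to the diagonal gives $\delta(\varphi^{-t}(x)-x)$, hence $|\det(I-d\varphi^{-t}(x_0))|=|\det(I-e^{-tA})|$, not $|\det(I-e^{tA})|$; if you expand the latter as a multi-geometric series, the shift $k_j\geq 1$ lands on the \emph{unstable} indices $j>s$ instead of the stable ones, yielding the wrong resonance set. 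Second, the wavefront set of $u_{\vec k,l}$ is not contained in $E_s^*=E_-^*$: $u_{\vec k,l}$ is a conormal distribution to $W^u(x_0)=\{x_1=\cdots=x_s=0\}$, and under the paper's convention (forced by the contraction estimates in Section \ref{s:cotangent}) the conormal to $W^u$ equals $E_u^*=E_+^*$, not $E_s^*$. Your conclusion that $u_{\vec k,l}$ lies in the anisotropic space is still right precisely because $E_+^*$ is where $\Hcal^s$ tolerates singularities, but the bundle you named would make the statement false. Third, the eigenfunction computation uses the linearized vector field $\Xbf=\sum_j\lambda_j x_j\partial_{x_j}+(\text{fiber part})$; for the actual nonlinear flow this requires a normal form/linearization argument (and, when eigenvalues are resonant, the states are not pure monomial--delta products), plus a separate completeness argument that you only sketch. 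The paper's flat-trace computation sidesteps all three of these issues at once, since only the linearization at $x_0$ enters the determinant.
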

For general Morse--Smale flow, we can use Proposition \ref{p:res-axioma} to obtain that the resonances are exactly the union of \eqref{e:res_closedorbit} for each closed orbit and \eqref{e:res_fixedpoint} for each fixed point.

\subsubsection{A single closed orbit}
In the case of a single closed orbit $\gamma$ of period $T$, we can use the trace formula \eqref{vb_localtrace}:
\begin{align}\label{e:tracef_closedorbit}
     \sum\limits_{\mu\in\Res(P),\Im(\mu)>-A}e^{-i\mu t}+F_A(t)=\sum\limits_{n=1}^\infty\frac{T\delta(t-nT)\Tr\alpha_\gamma^n}{|\det(I-\mathcal{P}_\gamma^n)|},\quad t>0.
\end{align}
\begin{itemize}
    \item If $E_s|_{\Pcal_\gamma}$ is orientable, then $|\det(I-\mathcal{P}_\gamma^n)|=(-1)^s\det(I-\mathcal{P}_\gamma^n)$. This is because 
    \begin{align*}
        \det(I-\mathcal{P}_\gamma^n|_{E_s})>0,\quad (-1)^s\det(I-\mathcal{P}_\gamma^n|_{E_s})=\det(I-\mathcal{P}_\gamma^{-n}|_{E_s})\det(\mathcal{P}_\gamma^n|_{E_s})>0.
    \end{align*}
    \item If $E_s|_{\Pcal_\gamma}$ is orientable, then $|\det(I-\mathcal{P}_\gamma^n)|=(-1)^{s+n}\det(I-\mathcal{P}_\gamma^n)$. This is because
    \begin{align*}
        \det(I-\mathcal{P}_\gamma^n|_{E_s})>0,\quad (-1)^{s+n}\det(I-\mathcal{P}_\gamma^n|_{E_s})=\det(I-\mathcal{P}_\gamma^{-n}|_{E_s})(-1)^n\det(\mathcal{P}_\gamma^n|_{E_s})>0.
    \end{align*}
\end{itemize}
Thus we conclude
$|\det(I-\mathcal{P}_\gamma^n)|=(-1)^{s+n\epsilon_\gamma}\det(I-\mathcal{P}_\gamma^n)$.

The trace formula \eqref{e:tracef_closedorbit} then gives
\begin{align*}
    &\sum\limits_{\mu\in\Res(P),\Im(\mu)>-A}e^{-i\mu t}+F_A(t)=\sum\limits_{n=1}^\infty\frac{T\delta(t-nT)(e^{-n\mu_1}+\cdots+e^{-n\mu_r})}{(-1)^{s+n\epsilon_\gamma}\prod_j(1-e^{-n \lambda_j})}\\
    &=T\sum\limits_{n=1}^\infty\sum\limits_{j=1}^r\delta(t-nT) (-1)^{n\epsilon_\gamma}   e^{-n\mu_j}e^{n\lambda_1}\cdots e^{n\lambda_s}\sum\limits_{k\in\mathbbm{N}^d}e^{n k_1\lambda_1}\cdots e^{nk_s \lambda_s}e^{-nk_{s+1}\lambda_{s+1}}\cdots e^{-n k_d\lambda_d}\\
    &=\sum\limits_{n\in\ZZ,j} e^{-2\pi i n t/T-\pi i\epsilon_\gamma t/T}e^{-\mu_j t/T}e^{(\lambda_1 +\cdots+\lambda_s) t/T}\sum\limits_{k\in\mathbbm{N}^d}e^{(k_1\lambda_1+\cdots+k_s\lambda_s-k_{s+1}\lambda_{s+1}-\cdots-k_d\lambda_d)t/T}.
\end{align*}
Thus we conclude \eqref{e:res_closedorbit}.

\subsubsection{A hyperbolic fixed point}
Now we turn to the case with a single hyperbolic fixed point. Consider the operator $\chi e^{-t\Xbf}\chi$: the Schwartz kernel is 
\begin{align*}
    \chi(x)\alpha_{y,t}\delta(\varphi^{-t}(x)-y)\chi(y)
\end{align*} 
and its restriction to the diagonal is given by
$\alpha_{x_0,t}\delta_{x_0}/|\det(I-d_x\varphi^{-t}(x_0))|$. Similar to above, we have
\begin{align*}
    |\det(I-d_x\varphi^{-t}(x_0))|=(-1)^s\det(I-d_x\varphi^{-t}(x_0)).
\end{align*}
Thus
\begin{align*}
    \Tr^\flat(\chi e^{-t\Xbf}\chi)=\Tr\alpha_{x_0,t}/|\det(I-d\varphi^{-t}|_{x_0})|=(-1)^s\left(\sum_l e^{-\mu_l t}\right)\prod_j(1-e^{-\lambda_jt})^{-1}.
\end{align*}
We integrate in $t$,
\begin{align*}
    &\Tr^\flat\int_0^\infty \chi e^{-t(\Xbf-i\lambda)}\chi dt = \int_0^\infty (-1)^s\left(\sum_l e^{-\mu_l t}\right)\prod_j(1-e^{-\lambda_jt})^{-1} e^{i\lambda t}dt\\
    &=\sum_l \int_0^\infty e^{-\mu_lt +(\lambda_1+\cdots+\lambda_s)t}\sum\limits_{k\in\mathbb{N}^d} e^{(k_1\lambda_1+\cdots k_s\lambda_s)t-(k_{s+1}\lambda_{s+1}+\cdots k_d\lambda_d)t+i\lambda t}dt\\
    &=\sum_l\sum\limits_{k_1,\cdots,k_s=1}^\infty\sum\limits_{k_{s+1},\cdots,k_d=0}^\infty (-i\lambda+\mu_l-k_1\lambda_1-\cdots-k_s\lambda_s+k_{s+1}\lambda_{s+1}+\cdots + k_d\lambda_d)^{-1}.
\end{align*}
So the resonances are given by \eqref{e:res_fixedpoint}.

\subsection{Geodesic flow on hyperbolic manifolds and the fractal Weyl law}
For geodesic flows on closed hyperbolic manifolds, Dyatlov--Faure--Guillarmou \cite{DFG} computed the resonances in terms of the eigenvalues of the Laplace operator. Guillarmou--Hilgert--Weich \cite{GHW} and
Hadfield \cite{hadfield} gave the following generalization to geodesic flows on convex cocompact hyperbolic manifolds.
\begin{prop}(\cite[Theorem 1.1]{hadfield})
Let $M=\Gamma\backslash\mathbb{H}^{n+1}$ be a smooth convex cocompact hyperbolic manifold. Then for $\lambda_0\in\mathbb{C}\setminus \left(-\frac{n}{2}-\frac{1}{2}\mathbb{N}_0\right)i$, we have the following isomorphism between the space of generalized resonance states for the geodesic flow and the space of generalized resonance states for the Laplacian operator on certain divergence free and trace free symmetric tensors:
\begin{align*}
    \Res_X(\lambda_0)\cong \bigoplus\limits_{m\in\mathbb{N}_0}\bigoplus\limits_{0\leq k\leq m/2}\Res_{\Delta,m-2k}(-i\lambda_0+m+n).
\end{align*}
\end{prop}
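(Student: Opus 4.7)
The plan is to follow the strategy of Dyatlov--Faure--Guillarmou \cite{DFG} for closed hyperbolic manifolds, using the additional microlocal tools developed in this paper and the spectral theory for open hyperbolic systems from \cite{open} to handle the convex cocompact setting, as carried out by Guillarmou--Hilgert--Weich \cite{GHW} in the scalar case and Hadfield for symmetric tensors. The geodesic flow on $M = \Gamma\backslash\mathbb{H}^{n+1}$ lifted to $SM$ fits into the framework of an open hyperbolic system because the nonwandering set is the trapped set $K$ of the flow, and Section \ref{s:spectral} produces a meromorphic resolvent $R(\lambda)$ whose poles comprise $\Res(P)$, together with finite-rank generalized resonant states $\Res_X(\lambda_0) = \operatorname{range}\Pi_{\lambda_0}$ whose wavefront set is contained in $E_+^*$.

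The key algebraic input is the identification of $SM$ with the bi-quotient $\Gamma\backslash G / M_0$ where $G = \mathrm{SO}_0(n+1,1)$ and $M_0 = \mathrm{SO}(n)$, so that the geodesic flow is generated by right translation along a one-parameter subgroup $A \subset G$. First I would introduce the horocyclic raising and lowering operators $\mathcal{U}_\pm$, which are $n$ commuting first-order differential operators tangent to the unstable/stable horocyclic foliations and satisfying the commutation relation $[X, \mathcal{U}_\pm] = \mp \mathcal{U}_\pm$. The operators $\mathcal{U}_-$ shift resonances, and by using the anisotropic Sobolev spaces $\mathcal{H}^s_h$ from Proposition \ref{c:weight} (which are microlocally well-behaved on $E_\pm^*$), one shows that $\mathcal{U}_-$ maps $\Res_X(\lambda_0)$ into $\Res_X(\lambda_0 + i)$ with appropriate kernel understood via ladders.

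Next I would define the space of first-band resonant states as $V_{\lambda_0} = \{u \in \Res_X(\lambda_0) : \mathcal{U}_- u = 0\}$ and show that every generalized resonant state decomposes uniquely as
\begin{equation*}
u = \sum_{m \geq 0} \mathcal{U}_+^m u_m, \qquad u_m \in V_{\lambda_0 + im}
\end{equation*}
(with appropriate symmetrization for tensor versions), where the sum terminates because the spectral parameter condition $\lambda_0 \notin (-\tfrac{n}{2} - \tfrac{1}{2}\mathbb{N}_0)i$ avoids the exceptional ladder positions. The wavefront control $\mathrm{WF}(u) \subset E_+^*$ is what makes the horocyclic differentiations well-defined on these distributions, even though the foliation is only Hölder regular, since horocyclic derivatives are transverse to $E_+^*$. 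The symmetrized powers $\mathcal{U}_+^m$ applied to trace-free components account for the summation over $k$ with $0 \leq k \leq m/2$ in the final formula.

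The final step is to identify $V_{\lambda_0 + im}$ with $\Res_{\Delta, m - 2k}(-i\lambda_0 + m + n)$ using the Poisson operator $\mathcal{P}_\lambda$ on the conformal boundary $\partial_\infty \mathbb{H}^{n+1}$. First-band resonant states are supported along stable leaves and are obtained as boundary distributions via $\mathcal{P}_\lambda$ paired against symmetric tensors on $\partial_\infty \mathbb{H}^{n+1}$; the $\Gamma$-invariance of such boundary distributions with wavefront on the limit set corresponds exactly to resonances of the Laplacian on divergence- and trace-free symmetric tensors via the Helgason-type correspondence generalized to convex cocompact settings. The hardest step will be establishing the horocyclic decomposition in the convex cocompact case, because one must carefully track the wavefront sets of $u$ under horocyclic differentiation and verify that the resulting first-band states lift to the boundary with the correct regularity: this requires a refined version of the radial point propagation estimates used in Proposition \ref{p:propa} to ensure that $\mathcal{U}_+^m$ does not leak outside the anisotropic Sobolev space in which $\Res_X(\lambda_0)$ is characterized. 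Once this is established, the counting and identification of generalized resonant states produce the asserted isomorphism.
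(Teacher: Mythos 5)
The paper does not prove this proposition; it is quoted directly from Hadfield \cite[Theorem 1.1]{hadfield} (building on \cite{DFG} and \cite{GHW}) as a known result, used only to motivate the fractal Weyl law discussion in Conjecture~\ref{conj}. There is therefore no internal proof in the paper against which to compare your argument.

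That said, your sketch is a fair high-level paraphrase of the strategy actually used in \cite{DFG,GHW,hadfield}: realize $SM$ as a bi-quotient $\Gamma\backslash G/M_0$, introduce horocyclic raising/lowering operators intertwining with the flow, decompose generalized resonant states into bands via the horocyclic ladder, and identify first-band states with Laplace eigentensors on the conformal boundary via the Poisson operator, with the spectral condition on $\lambda_0$ ruling out exceptional ladder positions. A few details are imprecise and would need correction in a real write-up: first-band resonant states are not ``supported along stable leaves'' but are distributions on $\Gamma_+$ with wavefront in $E_+^*$, pushed forward to the boundary by a degenerate fibration; the index $k$ in the formula arises from the trace decomposition of rank-$m$ symmetric tensors rather than from ``symmetrized powers applied to trace-free components''; and the function spaces in \cite{GHW,hadfield} are tailored to the hyperbolic quotient structure, not the generic anisotropic Sobolev spaces of Proposition~\ref{c:weight}. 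None of these indicates a fatal gap in the outline, but they do show that the sketch stays at the level of the strategy and that the hard analytic content -- the boundary pairing, the quantum--classical correspondence, and the wavefront propagation for the horocyclic operators in the convex cocompact setting -- is entirely deferred to the cited references.
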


Suppose the limit set of $M=\Gamma\backslash\mathbb{H}^{n+1}$ has dimension $\delta_\Gamma$, we have the fractal Weyl upper bound for the number of resonances by Datchev--Dyatlov \cite{DDfwl}:
\begin{align*}
    \sum\limits_{|\Re\lambda - E|\leq 1,\Im\lambda>-A} \dim\Res_{\Delta,m-2k}(-i\lambda_0+m+n) =\Ocal( E^{\delta_\Gamma}).
\end{align*}
Thus
\begin{align*}
    \Res(P)\cap\{|\Re\lambda - E|\leq 1,\Im\lambda>-A\}=\Ocal(E^{\delta_\Gamma}).
\end{align*}
Since $\delta_\Gamma\in [0,n)$, this fits into our upper bound ($\dim S^*M=2n+1$) and suggests the following conjecture on the fractal Weyl upper bound on the number of Pollicott--Ruelle resonances:
\begin{conj}
\label{conj}
Suppose we have an open system such that the trapped set in $p^{-1}(1)$, which is $E_0^\ast\cap p^{-1}(1)$, has dimension $\delta$. Then for any $\epsilon>0$, we expect
\begin{equation}
\label{e:fractalweyl}
    \#\Res(P)\cap\{|\Re\lambda - E|\leq 1,\Im\lambda>-A\}=\Ocal(E^{\frac{\delta-1}{2}+\epsilon}).
\end{equation}
Under certain purity conditions, \eqref{e:fractalweyl} should hold without $\epsilon$. (Here since the trapped set may be fractal, one needs to make a suitable choice of the concept of dimension.)
\end{conj}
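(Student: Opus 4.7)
The plan is to refine the resolvent/Jensen argument used for Theorem \ref{t:upper} by shrinking the complex absorbing potential so that its rank reflects the fractal dimension of $K_1:=E_0^*\cap p^{-1}(1)$ rather than its ambient dimension. Recall that in Section \ref{s:upper} the potential $W$ had $\rank W=\Ocal(h^{-n})$ because it was microlocally elliptic on an $\Ocal(1)$-size neighborhood of $K_1$ inside the $2n$-dimensional slice $p^{-1}(1)$. The heuristic behind \eqref{e:fractalweyl} is that $W$ only needs to be elliptic on an $h^{1/2}$-thickening of $K_1$; a Minkowski-type cover of $K_1$ at scale $h^{1/2}$ has cardinality $\Ocal(h^{-\delta/2+\epsilon})$ and each box contributes rank $\Ocal(h^{-1/2})$ in the missing flow direction, giving a target rank $\Ocal(h^{-(\delta-1)/2+\epsilon})$ after accounting for the one-dimensional flow direction along which $W$ is essentially trivial.

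First I would construct a family of symbols $\{a_j\}$ adapted to an $h^{1/2}$-cover of $K_1$ by anisotropic boxes (of size $h^{1/2}$ transverse to the flow and of size $1$ along it), in the spirit of the second-microlocal calculus of Sjöstrand--Zworski and Nonnenmacher--Zworski for scattering resonances. Quantizing a sum $w_h=\sum_j a_j$ and multiplying by a sharp cutoff to $p^{-1}([1-h,1+h])$ yields $W_h\in \Psi^{\comp}_h$ with $\WF_h(W_h)$ in a thin neighborhood of $K_1$, with $K_1\subset\el_h(W_h)$, and with $\rank W_h=\Ocal(h^{-(\delta-1)/2+\epsilon})$ by the symbolic Weyl bound. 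Second, I would re-prove Theorem \ref{t:resolvent} with $W$ replaced by $W_h$: the propagation estimate of Proposition \ref{p:propa} must be upgraded so that points leaving the $h^{1/2}$-neighborhood of $K_1$ under $e^{tH_p}$ reach either $\el_h(W_h)$ again or the escaping region where $q_1,Q_\infty$ control the estimate. Third, with this resolvent estimate in hand, the operator $F(z)=I+i(P_0-z-iMhW_h)^{-1}MhW_h$ is holomorphic and finite rank with $|F(z)|\leq \exp(Ch^{-(\delta-1)/2-\epsilon})$, and Jensen's inequality applied as in Section \ref{s:upper} immediately yields \eqref{e:fractalweyl}.

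The main obstacle will be the refined resolvent bound $\|(P_0-z-iMhW_h)^{-1}\|_{\Hcal^s_h\to\Hcal^s_h}\lesssim h^{-1}$. The propagation argument in \cite{open} uses ellipticity of the absorbing potential on the full trapped set $E_0^*\cap p^{-1}(1)$; when $W_h$ is only microlocally supported in an $h^{1/2}$-neighborhood of $K_1$, trajectories must be tracked for a logarithmically long Ehrenfest time $T\sim \log(1/h)/\theta$ before they can be absorbed by $q_1$ or $Q_\infty$, and one must verify that the anisotropic weight $G$ gains enough in $\Im\sigma_h$ over this time. This is precisely the step where the H\"older regularity of the stable/unstable distributions (as in Faure--Tsujii \cite{fwl}) would enter through an exotic symbol calculus $S_{1/2,1/2}$, and the $\epsilon$ in \eqref{e:fractalweyl} should reflect this loss; under a pinching or analyticity assumption that smoothness of $E_{\pm}^*$ can be obtained, the $\epsilon$ should disappear, consistent with the sharp bounds of Datchev--Dyatlov \cite{DDfwl} in the convex cocompact hyperbolic case.

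A secondary difficulty is the correct choice of \emph{dimension} in \eqref{e:fractalweyl}: for fractal trapped sets the upper Minkowski (box-counting) dimension is the natural one for covering arguments, while pure Hausdorff dimension is generally too small. One should therefore interpret $\delta$ as the upper box dimension of $K_1$; in the Axiom A setting with the transversality condition, a Bowen-formula argument (controlling the topological pressure of the unstable Jacobian restricted to the trapped set) should ensure that this dimension is well-defined and independent of the local choice of $\Ucal$.
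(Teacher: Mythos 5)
The statement you are addressing is Conjecture~\ref{conj}, not a theorem---the paper does not prove it, and there is no ``paper's own proof'' to compare against. What you have written is a research plan, not a proof. It correctly identifies the Sj\"ostrand--Zworski-style strategy (shrink the absorbing potential to an $h^{1/2}$-neighborhood of $K_1=E_0^*\cap p^{-1}(1)$ to reduce its rank to the Minkowski-dimensional exponent, then run the Jensen argument from Section~\ref{s:upper}) and correctly identifies the main obstruction, but neither of the two key steps is carried out, so this should not be presented as a proof.

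Two substantive remarks. First, your rank heuristic is garbled: multiplying ``$\Ocal(h^{-\delta/2})$ boxes'' by ``rank $\Ocal(h^{-1/2})$ per box'' gives $h^{-(\delta+1)/2}$, not the target $h^{-(\delta-1)/2}$. The correct count, as in Sj\"ostrand's original fractal upper bound \cite{sjostrand}, is that the $h^{1/2}$-neighborhood of $K_1$ inside the $(2n-1)$-dimensional energy shell $p^{-1}(1)$ has volume $\sim h^{(2n-1-\delta)/2}$, and after thickening by $h$ (not $h^{1/2}$) in the energy direction---the ``sharp cutoff to $p^{-1}([1-h,1+h])$'' you allude to---the phase-space volume is $h^{(2n+1-\delta)/2}$, giving Weyl rank $h^{(2n+1-\delta)/2-n}=h^{-(\delta-1)/2}$. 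So the $h$-sharp energy localization is what produces the $-1$ in the exponent; your per-box bookkeeping happens to land on the right answer but the intermediate steps do not add up. Second, and more seriously, the refined resolvent bound $\|(P_0-z-iMhW_h)^{-1}\|_{\Hcal^s_h\to\Hcal^s_h}\lesssim h^{-1}$ for $W_h$ microlocalized at scale $h^{1/2}$ is genuinely open in this generality. Theorem~\ref{t:resolvent} and \cite[Lemma~3.2]{open} rely on Proposition~\ref{p:propa} with an absorbing potential elliptic on a \emph{fixed} neighborhood of $E_0^*\cap p^{-1}(1)$; shrinking it to $h^{1/2}$-scale forces one into an exotic $S_{1/2}$ calculus and requires propagation over Ehrenfest time $T\sim\log(1/h)$, and it is not established that the anisotropic weight $G$ (built from the merely H\"older-continuous bundles $E_\pm^*$) gains enough imaginary part over this time window. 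This is precisely the phenomenon behind the non-sharp H\"older-dependent exponent in Faure--Tsujii \cite{fwl} for Anosov flows, and why the paper records \eqref{e:fractalweyl} as a conjecture supported only by the special cases surveyed at the end of Section~\ref{s:example}. You flag this step as ``the main obstacle,'' which is the right diagnosis, but a diagnosis is not a proof.
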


Several other special cases are known:
\begin{itemize}
\item Datchev--Dyatlov--Zworski \cite{DDZ} proved \eqref{e:fractalweyl} in the case of contact Anosov flows which is also optimal in the case of geodesic flow on closed hyperbolic manifolds.
\item Faure--Tsujii \cite{fwl} proved a version of \eqref{e:fractalweyl} for general Anosov flows with exponents given by the H\"{o}lder regularity of $E_u\oplus E_s$, using an anisotropic version of Minkowski dimension.
\item For suspension flows, Proposition \ref{p:maptoflow} gives a better bound $\Ocal(1)$ than \eqref{e:fractalweyl} in general. In this case, the trapped set in $p^{-1}(1)$ is contained in a Lagrangian submanifold $\{(x,u;du):(x,u)\in\mathcal{M}\}$ here $(x,u)\in M\times[0,1]$ is a local coordinate on $\mathcal{M}=M\times[0,1]/\sim$ and $du$ is the $1$-form obtained by $u$ which is well-defined on $\mathcal{M}$. This allows better localization $\Ocal(h)$ in phase space rather than $\Ocal(h^{1/2})$ for general fractal trapped set.
\item For Morse--Smale flows, Proposition \ref{p:res-morsesmale} gives \eqref{e:fractalweyl}: for closed orbits, $\delta=1$ and by \eqref{e:res_closedorbit},
\begin{align*}
    \#\Res(P)\cap \{|\Re\lambda - E|\leq 1,\Im\lambda>-A\}=\Ocal(1).
\end{align*} 
and for fixed points, the trapped set in $p^{-1}(1)$ is empty and by \eqref{e:res_fixedpoint},
\begin{align*}
    \Res(P)\cap \{|\Re\lambda - E|\leq 1,\Im\lambda>-A\}=\varnothing,\quad E\gg 1.
\end{align*}
Both results match the fractal Weyl upper bound \eqref{e:fractalweyl}.
\end{itemize}



\def\arXiv#1{\href{http://arxiv.org/abs/#1}{arXiv:#1}}

\end{document}